\numberwithin{equation}{section}
\newtheorem{theorem}{Theorem}
\newtheorem{lemma}{Lemma}
\newtheorem{deff}{Definition}
\def\namedlabel#1#2{\begingroup
    #2%
    \def\@currentlabel{#2}%
    \phantomsection\label{#1}\endgroup
}
\newtheorem{remark}{Remark}[section]
\title{Neural ODE Control for classification, approximation and transport}
\author[Domènec Ruiz-Balet]{Domènec Ruiz-Balet$^{\dagger}$}
\address{}
\curraddr{}
\email{domenec.ruiz@deusto.es\\domenec.ruizi@uam.es}
\thanks{}
\author[Enrique Zuazua]{Enrique Zuazua$^{\star\dagger}$}
\address{}
\curraddr{}
\email{enrique.zuazua@fau.de}
\thanks{
\textbf{Funding}: Authors were funded by the European Research Council (ERC) under the European Union’s Horizon 2020 research and innovation programme (grant agreement NO. 694126-DyCon).
the work of the second author was also funded by the European Union’s Horizon 2020 research and innovation programme under the Marie Sklodowska-Curie grant agreement No.765579-ConFlex.D.P., the Alexander von Humboldt-Professorship program, the Transregio 154, Mathematical Modelling, Simulation and Optimization using
 the Example of Gas Networks,
  of the German DFG, project C08, grant MTM2017-92996-C2-1-R COSNET of MINECO (Spain) and by the Air Force Office of Scientific Research (AFOSR) under Award NO. FA9550-18-1-0242.\newline\textcolor{white}{.}\newline
  $\dagger$Departamento de Matemáticas, Universidad Autónoma de Madrid, 28049 Madrid, Spain.\newline
\textcolor{white}{..}Chair of Computational Mathematics, Fundación Deusto Av. de las Universidades 24, 48007 Bilbao, BasqueCountry, Spain.\newline
$\star$Chair in Applied Analysis, Alexander von Humboldt-Professorship, Department of Data Science Friedrich-Alexander-Universität, Erlangen-Nürnberg, 91058 Erlangen, Germany.\newline
}
\begin{document}

\maketitle
 \begin{abstract}
We analyze Neural Ordinary Differential
Equations (NODEs) from a control theoretical perspective to address some of the main properties and paradigms of Deep Learning (DL), in particular, data classification and universal approximation. These objectives are  tackled and
achieved from the perspective of the simultaneous control of systems of NODEs. For instance, in the context of classification, each item to be classified corresponds to a different initial datum for the control problem of the NODE, to be classified, all of them by the same common control, to the location (a subdomain of the euclidean space) associated to each label.  
Our
proofs are genuinely nonlinear and constructive, allowing us to estimate the complexity of
the control strategies we develop. The nonlinear nature of the activation functions
governing the dynamics of NODEs under consideration plays a key role in our
proofs, since it allows deforming half of the phase space while the other half remains invariant,
a property that classical models in mechanics do not fulfill. This very property allows to build elementary controls inducing specific dynamics and transformations whose concatenation, along with properly chosen hyperplanes, allows achieving our
goals in finitely many steps. 
The nonlinearity of the dynamics is assumed to be Lipschitz. Therefore, our results apply also in the particular case of the ReLU activation function.
We also present the counterparts in the context of the
control of neural transport equations, establishing a link between optimal transport and
deep neural networks.

\medskip
\smallskip
 \textbf{Keywords.} Neural ODEs, simultaneous control, universal approximation, data classification, Deep Learning, transport equations, Wasserstein distance, Optimal Transport.
\end{abstract}

\section{Introduction}

In this paper, we prove that the dynamical control properties of Neural Ordinary Differential Equations (NODEs) allow for the understanding of some of the main Deep Neural Networks' properties. In particular, we will focus on data classification and universal approximation.

Given a dataset $\{(x_i,y_i)\}_{i=1}^N\subset\mathbb{R}^d\times\mathbb{R}^d$,  $x_i$ being the input and $y_i$ the corresponding label, Supervised Learning (SL) aims to find a map $\phi:\mathbb{R}^d\to\mathbb{R}^d$  that associates $x$ to $y$ for later use on the allocation of a label to new unknown data. In classification, the labels take values in a finite set of $\mathbb{R}^d$ (typically associated with a subset of the natural numbers), while in regression, the labels correspond to continuous values of $\mathbb{R}^d$.

Neural networks (NN) are widely used in this context. A shallow neural network  can be written as a linear combination of the form:
\begin{equation}\label{nn}
  y(x)=\sum_{i=1}^n W_i\boldsymbol{\sigma}(A_ix+b_i)
\end{equation}
where $\boldsymbol{\sigma}:\mathbb{R}^d\to\mathbb{R}^d$ is the so called activation function (more details will be given later on), and $W_i,A_i\in\mathbb{R}^{d\times d}$, $b_i\in\mathbb{R}^d, \, i\in\{1,...,n\}$.

 In the seminal paper by Cybenko \cite{cybenko1989approximation}, it was proved that, under certain assumptions on $\boldsymbol{\sigma}$,  any continuous function in a compact set can be approximated by NN of the form \eqref{nn} in the supremum norm as $n\to \infty$. The product $dn$ is  the width of the network. This density result is the so called Universal Approximation Theorem and it has been the object of intensive study and
several improvements and generalizations (see \cite{pinkus1999approximation,leshno1993multilayer,hornik1989multilayer}).

A deep neural network is obtained from the composition of shallow neural networks of the type \eqref{nn} and in recent years they have played an important role in diverse applications,
\cite{lecun2015deep}.  Each of this composition steps is refereed to as a {\it layer} and the depth of the network is the total number of compositions operated. Deep neural networks can also be interpreted as discrete dynamical systems and can be employed, in particular,  to prove universal approximation theorems, \cite{daubechies2019nonlinear}. 
The parameters $W, A$ and $b$ entering in the design of the networks are often found via optimization, using, for instance, a least-squares error type 
functionals \cite{chizat2020implicit}, in the so-called {\it learning} process.

Residual neural networks (ResNets) (see \cite{he2016deep}) refer to the variant in which an ``{\it inertia}`` term is added in each layer, leading to a discrete dynamical system of the form
\begin{equation}\label{Euler} x_{l+1}=x_l+hW_l\boldsymbol{\sigma}(A_lx_{l}+b_l),\end{equation}
where $h$ is a positive real number. 
ResNets can also be understood as an Euler discretization scheme of an ODE, the so called  Neural ODEs (NODEs) (\cite{weinan2017proposal,haber2017stable,chen2018neural}):
\begin{equation}\label{CTNNo}
\dot{x}(t)=W(t)\boldsymbol{\sigma}(A(t)x(t)+b(t)).
\end{equation}

This paper aims to develop a dynamical control theoretical analysis of NODEs \eqref{CTNNo}. The time-dependent coefficients $A(t)$, $W(t)$ and $b(t)$ are not chosen using least squares and optimization approaches  (as in \cite{esteve2020large} and the references therein). Rather, we adopt a dynamic controllability approach according to which the time-dependent parameters $A(t)$, $W(t)$ and $b(t)$ are viewed as controls and are built in a constructive way, being defined in a piecewise-constant switching manner, adapted to the distribution of data to be classified. This allows for algorithmic constructions that permit to measure the complexity of the controls needed to achieve the main goals, in particular, classification and universal approximation. Our approach does not require either the regularity of the nonlinearity that classical ODE control methods need, often based on linearization and Lie brackets. In this way our results apply also in the case of the Rectified Linear Unit  (ReLU)  activation function which is Lipschitz but fails to be $C^1$.

The properties and problems we study are addressed in an ordered and systematic manner, according to their complexity.

The first one we address is the classification problem. From a control perspective, it can be formulated as a simultaneous control problem. The data to be classified are viewed as the initial data of the Cauchy problem associated to \eqref{CTNNo}. Classification  requires to build controls  $A(t)$, $W(t)$ and $b(t)$ that simultaneously drive all initial data to their corresponding final destination, allocated  depending on their label. The final targets that we prefix are disjoint sets corresponding to a partition of the space $\mathbb{R}^d$. The choice of the partition is not unique, and the specific control problem to be addressed depends on it. Although our methods allow to handle arbitrary partitions, for the sake of simplicity,  we consider the particular one in which the space is partitioned in parallel strips, which suffices to achieve the goal of classification. In Theorem \ref{TH1}, Section \ref{SCLASS}, we develop our new constructive proof of classification. As mentioned above, our construction allows to master the complexity of the required classifying controls, in terms of the distribution and structure of the dataset to be classified.

Theorem \ref{TH1} assures that each  initial data can be driven simultaneously to the strip corresponding to its label. But,  in fact, our constructions allow driving the system to much more precise targets so that each trajectory approaches arbitrarily a prescribed target point in $\mathbb{R}^d$. This is proved in  Section \ref{SSCONTROL}, Theorem \ref{TH2}.

Both results hold in an arbitrary time horizon $T>0$. This can be easily achieved by scaling arguments, the general case $T>0$ being reduced to $T=1$. Obviously, the $L^\infty$-norm of the controls employed depends then monotonically and linearly on $1/T$, $T$ being the length of time-horizon.

From a control perspective, these results can be viewed as particular instances of simultaneous or ensemble controllability properties. This topic has been considered in the control community for a variety of systems \cite{lions1988exact,loheac2016averaged,tucsnak2000simultaneous}.   From that perspective, the results in the present paper are new since the ODE system under consideration is the same for all initial data and the number $N$ of the controlled initial data  is arbitrarily large, while the number of available controls is limited: $2d^2+d$. This kind of simultaneous control result would be impossible for linear finite or infinite-dimensional systems, where the simultaneous control never occurs when different trajectories solve the same system. The specific nature of the nonlinearity of the NODEs and, as explained above,  the very structure of the activation function is the key of our construction.

Our proofs are genuinely nonlinear and strongly rely on the structure of the activation function $\boldsymbol{\sigma}$ that models the nonlinearity of the NODE \eqref{CTNNo} under consideration. The key ingredient of our proofs is that the activation function allows splitting the phase-space into two half-spaces, so that one remains invariant under the action of the control, while the other one evolves in the direction we wish, approaching the prescribed final target. Such construction is not possible for the classical nonlinear ODE systems in Mechanics in which all trajectories evolve simultaneously, driven by the nonlinear flow.  The proofs are constructive, allowing us to estimate the complexity of the needed piecewise-constant controls for achieving such results. Moreover, our techniques are of dynamic nature, and they are not based on optimization or linearization arguments,  that  would not allow  to obtain the global results we prove. 

Once the classification problem is solved and its upgraded simultaneous controllability version is proved in Theorems  \ref{TH1} and \ref{TH2}, our second main objective is to show that these techniques, when refined, allow also to  prove the Universal Approximation Theorem (Section \ref{SUA}). More precisely,  in Theorem \ref{THsimple} we prove that, given a simple function in $f: \Omega \to \mathbb{R}^d$ (a linear finite combination of characteristic functions of measurable sets), under the assumption that characteristic sets have finite perimeter, for any    bounded set $\Omega\subset\mathbb{R}^d$ and $\epsilon >0$ arbitrary being given, we can find controls $A(t), W(t)$ and $b(t)$ (depending in particular on $\epsilon$) such that the associated flow $\phi_T$ of \eqref{CTNNo} satisfies:
$$ \|f-\phi_T(\cdot;A,W,b)\|_{L^2(\Omega)}<\epsilon. $$ The proof, which is of constructive nature, yields a connection between the complexity of the function to be approximated and the cost of control in terms of the box-counting dimension (see \cite[Chapter 2]{falconer2004fractal}).   The control complexity is measured in terms of its $L^\infty$-norm and the number of switches and it depends on, from one side, the level of approximation $\epsilon$ we desire, and from the other, on the complexity of the target function. In fact,  any simple function can be approximated and its control cost approximated provided that the box-counting dimension of the boundaries of its characteristic sets is smaller than the dimension of the ambient space.

Our third main objective is to formulate these results in terms of control of transport equations, where the dynamical control perspective is made more evident. In Section \ref{Transport} (see Theorems \ref{TH5} and \ref{TH6}), we prove that the Neural Transport equation 
  \begin{equation}\label{neuraltransport}
  \partial_t\rho+\mathrm{div}_x\big[\left(W(t)\boldsymbol{\sigma}(A(t)x+b(t))\right)\rho\big]=0
\end{equation}
is approximately controllable. Furthermore, we will see that one can achieve a simultaneous control result for a Neural Transport system of equations of the form \eqref{neuraltransport} under certain conditions on the initial data and targets (roughly, having disjoint supports), which mimic the configuration of classification problems.

\subsection{Roadmap and related work.}
This paper is organized as follows. In Section \ref{Smatset}, we introduce the notation and formulate the main problems more precisely, presenting some of the fundamental tools that will systematically be used in our proofs. Section \ref{SCLASS} is devoted to the classification problem and in Section \ref{SSCONTROL} we reinterpret and further develop it in the simultaneous control context.  Section \ref{SUA} is devoted to proving the Universal Approximation Theorem, while Section \ref{Transport} to analyze neural transport equations. Section \ref{Sconcl} is devoted to present some open problems and further directions of research.

For the sake of simplicity of the exposition most of our results are presented in the particular case of the ReLU activation function, although the methods we develop apply for larger classes of activation functions that will be made  clear below.

 We conclude this introduction with a discussion of some of the related literature.
 
 Control issues for NODEs have been addressed from different  perspectives.  In \cite{sontag1997complete}, for instance, the authors analyze the classical control problem of NODEs. Note however that this is done in the context in which one typically considers one single trajectory and not the simultaneous control problems that classification requires.
 
 In the context of simultaneous control of Neural ODEs, in \cite{cuchiero2020deep} a Lie bracket technique is developed for certain smooth activation functions that allow to achieve universal interpolation results, similar to those  in Section \ref{SSCONTROL}. We also refer to \cite{agrachev2020control} for further developments on the geometric Lie bracket interpretation of these concepts in the context of classification, also using smooth vector fields. Note however that our approach can be implement for Lipschitz continuous activation functions as the ReLU as in Figure \ref{fig:RELU}.
 
 In \cite{esteve2020large}, the authors show a simultaneous control result for sufficiently regular activations under smallness conditions on the targets, and employing linearization arguments, to later derive Turnpike results for NODEs. Our techniques, of constructive nature, are of independent interest and pave the path to analyze Turnpike properties of NODEs in a much broader context.

 Recently, in \cite{li2019deep}, universal approximation results were proved by means of NODEs, see also \cite{teshima2020universal,tabuada2020universal}. In  Section \ref{SUA} we provide a simpler and more comprehensive proof of those results which allows, in particular, to handle the complexity of the controls in terms of the data to be classified or the target functions.

 The results in Section \ref{Transport}, which provide a  transport formulation of Deep Learning, making use of the classical link between ODEs and transport equations, can be seen as  bilinear-type simultaneous control results for Neural transport equations and systems (Theorems \ref{TH5} and \ref{TH6}). The bilinear  control of transport equations has also been considered (see, for instance, \cite{duprez2019approximate,duprez2020minimal}) with controls localized in space motivated by collective behavior problems \cite{control-kineticCS}. The nature of the controls we employ here and the dynamics that the systems under consideration, exploiting the very features of activation functions, are completely different. Furthermore, the use of Neural transport equations allows a probabilistic formulation of the classification problem.

 Our results for Neural Transport equations can also be related to optimal transport, that can be formulated as a minimal kinetic energy controllability problem as in \cite{benamou2000computational}, \cite{benamou1998optimal} and the references therein.
 
\subsection{Notation.} Throughout this article we will use the following notation. 
\begin{itemize}
 \item[-]We understand by $E\lesssim_{q}F$  the existence of a constant $C(q)$ depending on $q$ such that $E\leq C(q) F$. Analogously, we denote by $E\sim_q F$ the existence of $C(q)$ such that $E=C(q)F$.
 \item[-]Let $x\in \mathbb{R}^d$, we denote by  $x^{(k)}$ the $k$-th component of the vector $x$.
 \item[-]Let $\omega\subset\mathbb{R}^d$ we denote by 
$\omega^{(k)}$ the following set $$\omega^{(k)}:=\{x\in\mathbb{R}: \exists y\in\omega\quad \text{ such that } y^{(k)}=x\}.$$
\item[-] Given $\omega\subset\mathbb{R}^d$, we denote by $\mathrm{diam}$ the quantity $$\mathrm{diam}(\omega):=\sup_{x_1,x_2\in\omega} |x_1-x_2|$$ and by $\mathrm{diam}_{(k)}$ the quantity $$\mathrm{diam}_{(k)}(\omega):=\sup_{x_1,x_2\in\omega} |x_1^{(k)}-x_2^{(k)}|.$$
\end{itemize}

\section{Mathematical setting}\label{Smatset}
\subsection{Setting}

Consider a Lipschitz activation function $\sigma:\mathbb{R}\to\mathbb{R}$. For the clarity of the exposition, most of the presentation will be developed in the context of the particular activation function
\begin{equation}\label{relu}\sigma(x)=\max\{x,0\},\end{equation}
the so called ReLU displayed in Figure \ref{fig:RELU}. Later, in  Remark \ref{remact}, we shall discuss the extension to more general activation functions (roughly to functions which are Lipschitz, vanishing in $(-\infty,0]$ and positive in $(0,+\infty)$.)
\begin{figure}
 \includegraphics[scale=0.5]{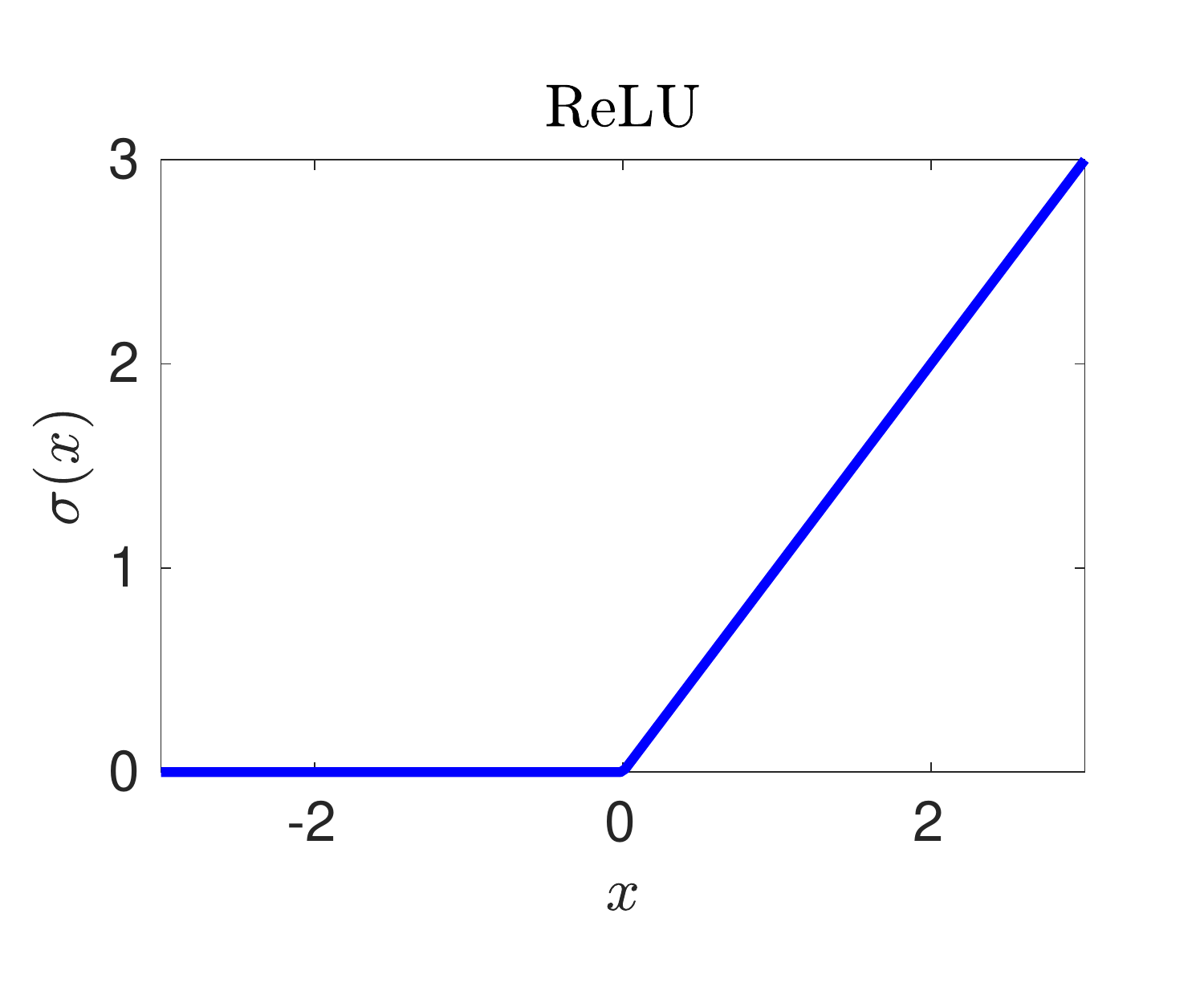}
 \caption{The Rectified Linear Unit (ReLU) activation function \eqref{relu}}\label{fig:RELU}
\end{figure}
We shall also employ the vector-valued version of the activation function, denoted by $\boldsymbol{\sigma}$, built applying $\sigma$ to every component of a vector:
\begin{equation}\label{struc}
 \boldsymbol{\sigma}:\mathbb{R}^d\to\mathbb{R}^d,\qquad
\boldsymbol{\sigma}(x)=\begin{pmatrix}
                        \sigma(x^{(1)})\\
                        \sigma(x^{(2)})\\
                        \vdots\\
                        \sigma(x^{(d)})
                       \end{pmatrix}.
\end{equation}
Consider now the Neural ODE
\begin{equation}\label{CTNN}
 \begin{cases}
  \dot{x}=W(t)\boldsymbol{\sigma}(A(t)x+b(t))\\
  x(0)=x_0.
 \end{cases}
\end{equation}
The coefficients $A,W$ and $b$ play the role of controls, $A,W\in L^\infty((0,T),\mathbb{R}^{d\times d})$, $b\in L^\infty((0,T),\mathbb{R}^{d})$. With the notations above  \eqref{CTNN} is a system of $d$ coupled differential equations.

The first question we address is the classification problem, which, in the context of the NODEs \eqref{CTNN},  can be reformulated as follows. We fix a time horizon $[0, T]$. Consider $N$ distinct data $\{x_i\}_{i=1}^N\subset \mathbb{R}^d$ to be classified by their labels $\{y_i\}_{i=1}^N\subset \mathbb{R}^d$ using \eqref{CTNN}. 
In fact $\{x_i\}$, the data to be classified,  will be taken to be the initial data at time $t=0$  for the Cauchy problem \eqref{CTNN}, while $\{y_i\}$ are the corresponding labels that will allow us to define the targets at the final time $t=T$. 

To reformulate this problem we consider a partition of $\mathbb{R}^d$ into $M$ (the number of distinct labels) disjoint open sets with nonempty interior.  We allocate one of these sets to each label. From a control theoretical  perspective the classification  problem can be reformulated as follows: to find controls $W, A, b$ bringing each initial data to the corresponding set along the trajectories of \eqref{CTNN}.

Note that this problem can be considered as a simultaneous or ensemble control one, in the sense that the same controls are aimed to control all trajectories.

\medskip
\noindent \textbf{The Partition of the Euclidean Space.} Our techniques can be applied for an arbitrary partition of the euclidean space. But the complexity of the control maps will depend on the structure, geometry and topology  of the partition. To simplify the construction of the controls and our presentation we shall consider a particular partition of the space into parallel strips that we introduce now.

Let $M$ be the number of different  labels $y_i$, i.e.
$$M=\left|\{y_i:\quad i\in\{1,...,N\}\}\right|. $$
We split the space $\mathbb{R}^d$ into $M$ disjoint sets $\mathcal{S}:=\{S_m\}_{m=1}^M$, each set  associated with one label. For simplicity of the presentation we set the partition into strips

\begin{equation}\label{strips}
 S_m:=\{ \alpha_{m-1}<x^{(1)}\leq \alpha_{m} \}\qquad 1\leq m\leq M
\end{equation}
with $\alpha_{0}=-\infty$, $\alpha_{M}=+\infty$ and $\alpha_{m}<\alpha_{m+1}$.

\begin{figure}
 \includegraphics[scale=0.40]{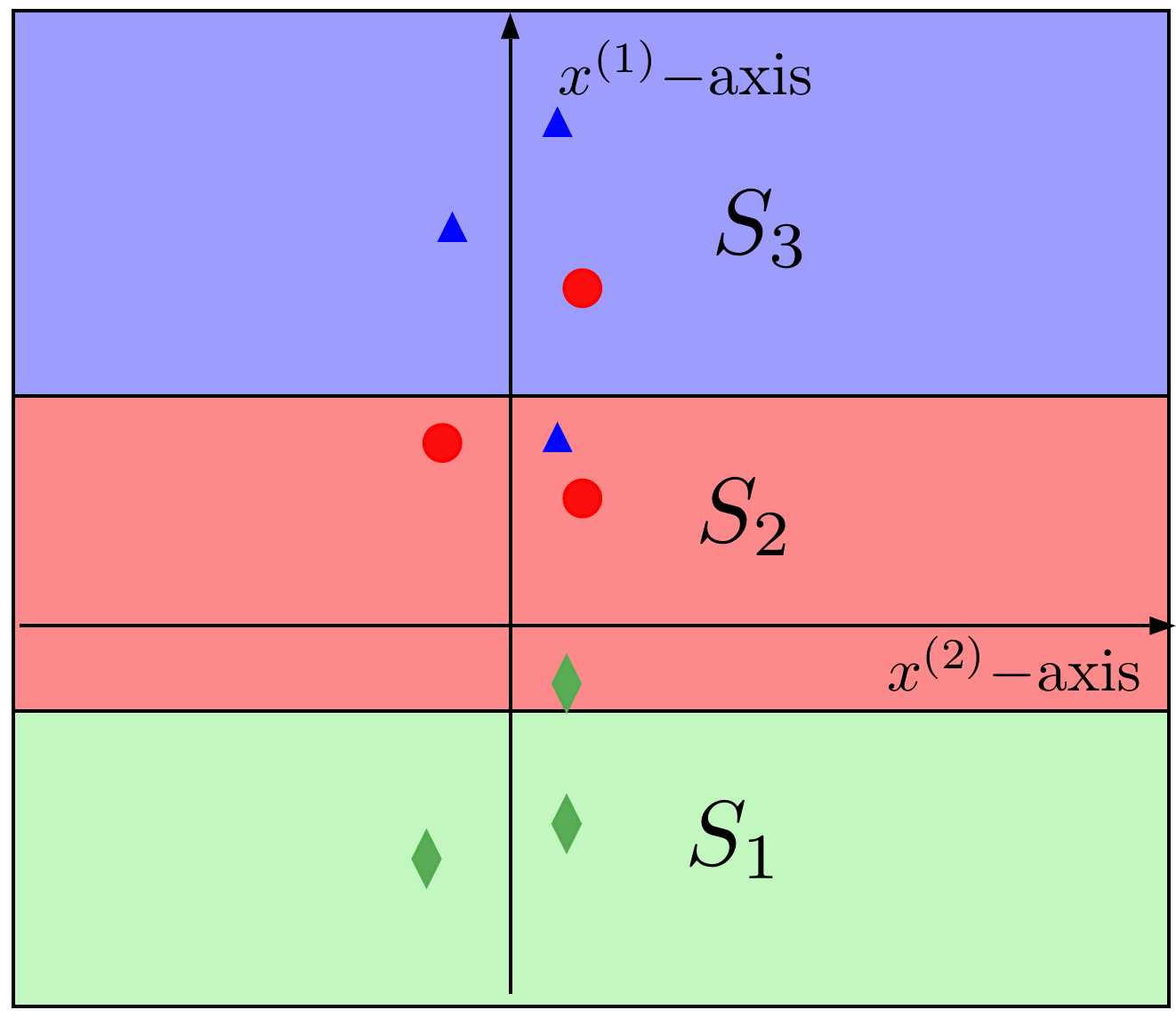}
 \caption{Partition of the Euclidean plane ($d=2$) into three parallel strips ($M=3$).  Nine points to be classified  are denoted by three different colors and shapes corresponding to their labels (black triangles, red circles and green diamonds) and they are distributed in an unclassified manner. The goal of the approximation process, that we aim to conduct by controlling the flow of the Neural ODE, is to drive each subclass of three points to the corresponding label, i.e. to the strip corresponding to its color.}
\end{figure}

\medskip
\noindent \textbf{The Control Process.} Given controls $W,A,b$ we denote the solution of \eqref{CTNN} by $\phi_T(x;A,W,b)$.
Given initial data $\{x_i\}_{i=1}^N$ with labels $\{y_i\}_{i=1}^N$, the goal is to find controls $W,A,b$ such that
$$\phi_T(x_i;A,W,b)\in \mathcal{S}_{m(i)}\qquad 1\leq i\leq N$$
where  $m(i)\in \{1,..., M\}$ denotes the index of the label corresponding to the input $x_i$.

\medskip
\noindent \textbf{The Prediction.} Once this map $\phi_T$ has been built out of the corresponding controls $A, W, b$, it can be used for prediction purposes, assigning to each new datum $x$ the corresponding label $y_m$ associated with the subset $S_m$ of the partition satisfying $\phi_T(x;A,W,b)\in S_m.$

\subsection{The fundamental operations: Place, freeze, compress/expand and translate}\label{funop}

All our results will be proved by the same methodology. It consists of building piece-wise constant controls which, concatenated, will assure that the NODE fulfills the needed requirements. Each of these constant actions exploits the activation function's properties, the ReLU, in this presentation. In each step the available control parameters $W, A$, and $b$ are chosen so that the ODE induces a transformation of the Euclidean space $\mathbb{R}^{d}$ which leaves invariant half of the space while  the other evolves towards the desired direction. Obviously, the choice of the correct  controls is essential to assure that  the invariant and the moving half-spaces are the appropriate ones, together with the direction of motion. 

\smallskip
\noindent For simplicity of the proofs we will also choose separating hyperplanes parallel to the Cartesian axes. The algorithm could be optimized by choosing oblique hyperplanes adapted to the datasets to be classified (in the context of the classical problem) or the function to be approximated (in the context of universal approximation). But, for simplicity of the presentation, we will only consider hyperplanes parallel to the Cartesian axes. 
\begin{enumerate}
 \item Consider a hyperplane of the form:
 $$\left\{x^{(k)}-c=0\right\}.$$
 This hyperplane can be represented as $Ax+b=0$ with the choice of the controls $A, b$ as below
$$A_{ij}=\pm \delta_{ik}\delta_{jk},\quad b_i=\mp c\delta_{ik}.$$
The hyperplane divides the Euclidean space in two half-spaces.
 Here and in the sequel $\delta_{jk}$ stands for the Kronecker delta.

\smallskip
 \item When  applying the vector-valued activation function  $\boldsymbol{\sigma}$ to $Ax+b$, taking into account that the ReLU activation function $\sigma$ vanishes for negative inputs, the choice of the sign of $A$ and $b$ is crucial to determine which half-space remains invariant;
 $$ \sigma(Ax+b)^{(i)}=\delta_{ik}\max\{\pm(x^{(k)}-c),0\}.$$
 See Figure \ref{Funstep}.A.
 
\smallskip
 \item Choosing $W$ to be a rotation matrix, when applied to $\boldsymbol{\sigma}(Ax+b)$, we can change the direction of the vector field in the direction we desire, see Figure \ref{Funstep}.B. Note that, in particular, we can choose $W$ so that the hyperplane is attractive or repulsive with respect to the moving half-space, but also we can generate a  movement parallel  to the hyperplane. 
 
  Throughout the proofs, we will only consider controls $W$ such that the resulting vector fields point in some of  the canonical euclidean directions. As it occurs for the choice of the hyperplanes, proofs could be implemented with a lower number of concatenations if $W$ were chosen in oblique directions, depending on the dataset. But, for simplicity of the exposition, we shall only employ these elementary $W$'s.

\end{enumerate}

Summarizing, we can:
\begin{enumerate}
 \item \textbf{Place} and orient the dividing hyperplane arbitrarily.
 
\smallskip
 \item  Choose the invariant and the active half-spaces. In particular we may choose the half-space that we aim to \textbf{freeze}.
 
\smallskip
 \item Choose the orientation of  the vector-field $\boldsymbol{\sigma}(Ax+b)$ so that either
 \begin{itemize}
 \item[-] it induces an \textbf{expansion or contraction} of the active half-space.
 
 or
 
 \item[-] it induces a \textbf{translation} on the active half-space parallel to the dividing hyperplane.
 \end{itemize}
\end{enumerate}

As we shall see, the concatenation of these elementary transformations will suffice to achieve our classification and  approximation goals.

\begin{figure}
\hspace{-2.5cm}\begin{subfigure}[b]{0.1\textwidth}
 \includegraphics[scale=0.4]{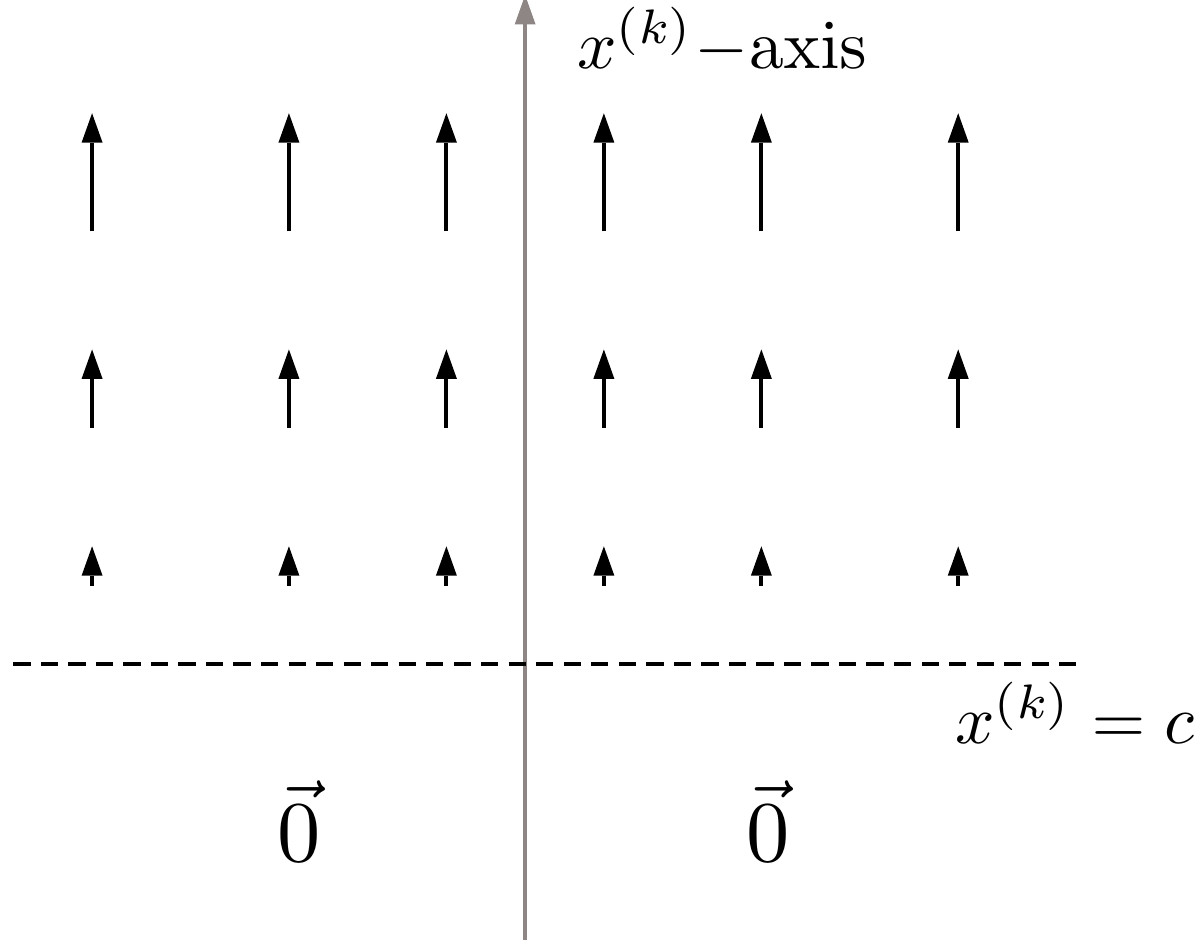}
\caption{}
\end{subfigure}
\hspace{5cm}
\begin{subfigure}[b]{0.1\textwidth}
 \includegraphics[scale=0.4]{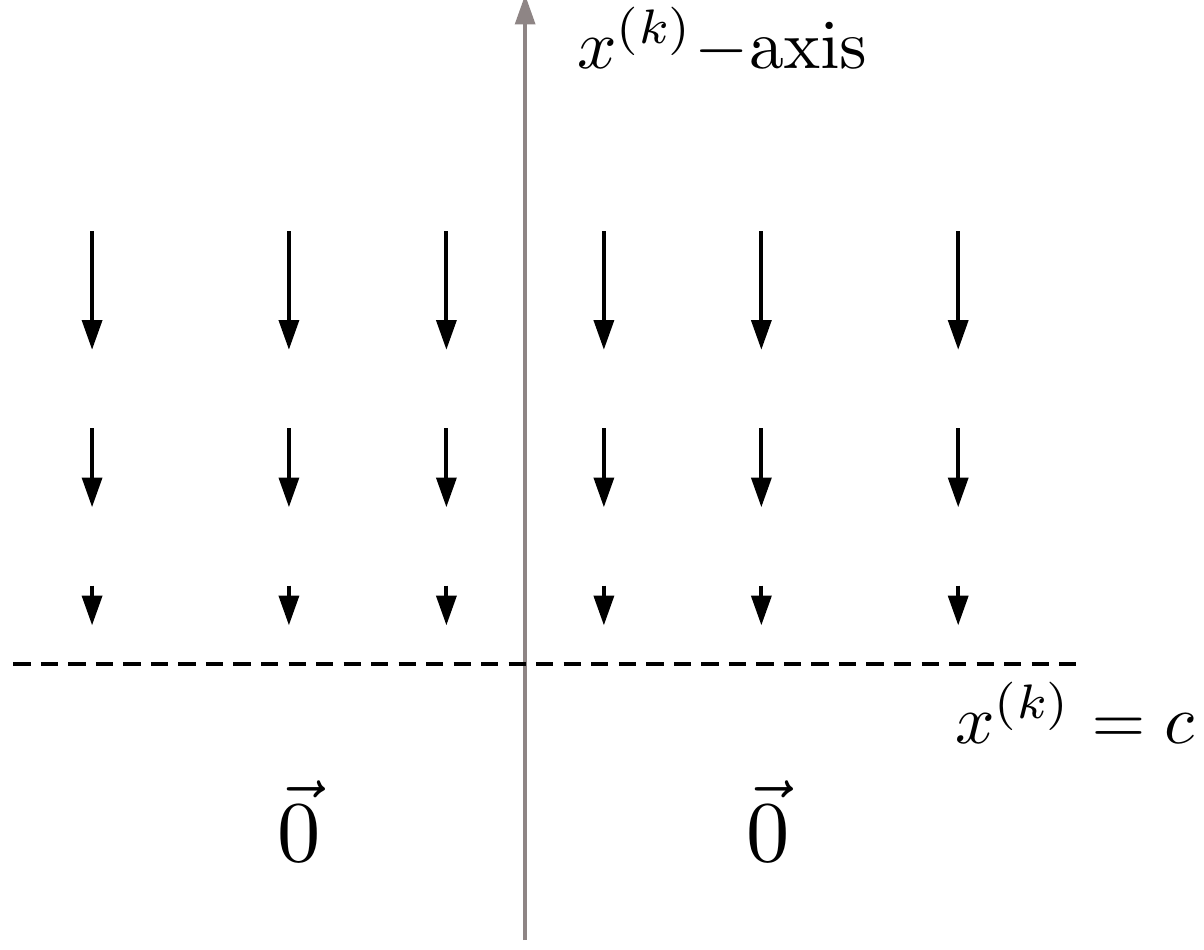}
\caption{}
\end{subfigure}

\hspace{-2.5cm}\begin{subfigure}[b]{0.1\textwidth}
 \includegraphics[scale=0.4]{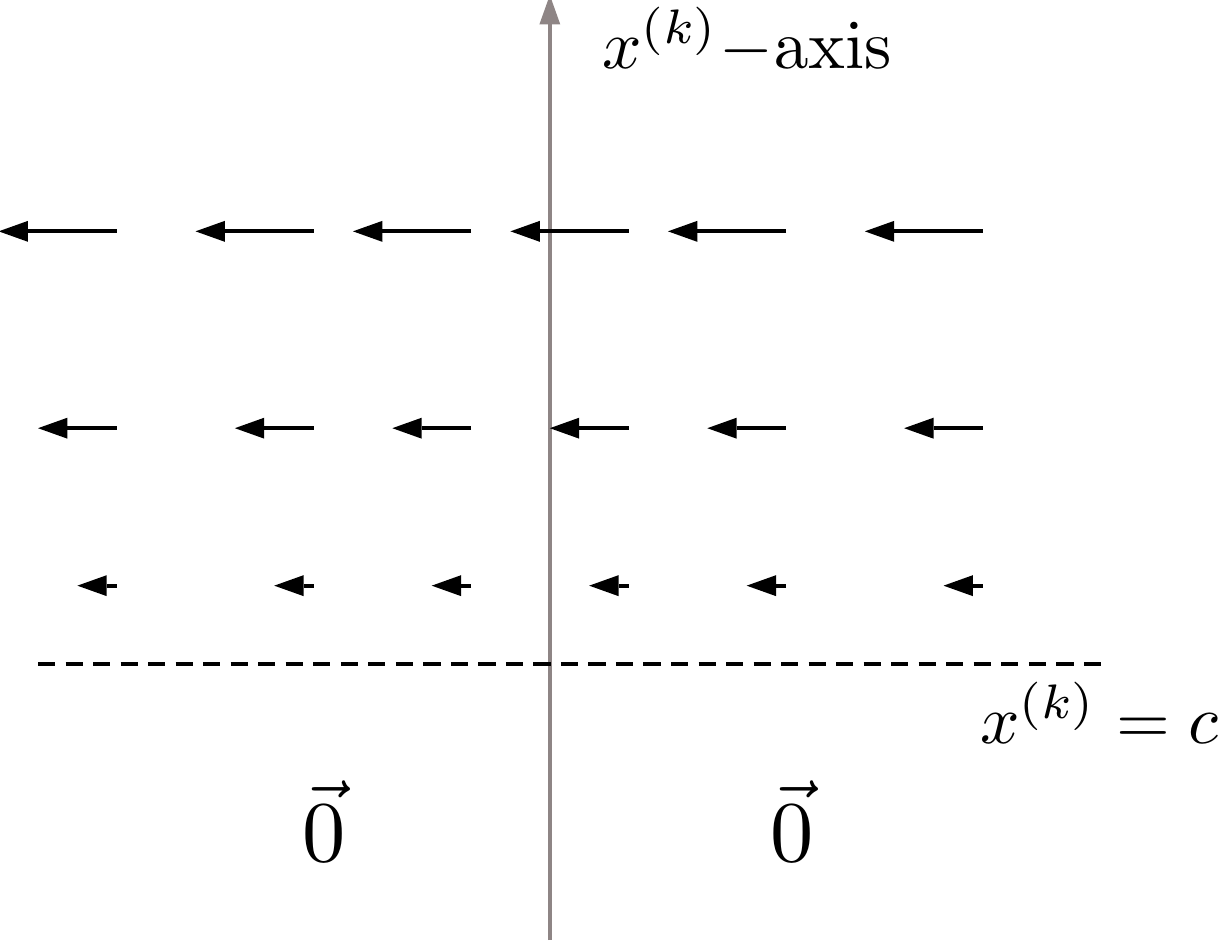}
\caption{}
\end{subfigure}
\hspace{5cm}
\begin{subfigure}[b]{0.1\textwidth}
 \includegraphics[scale=0.4]{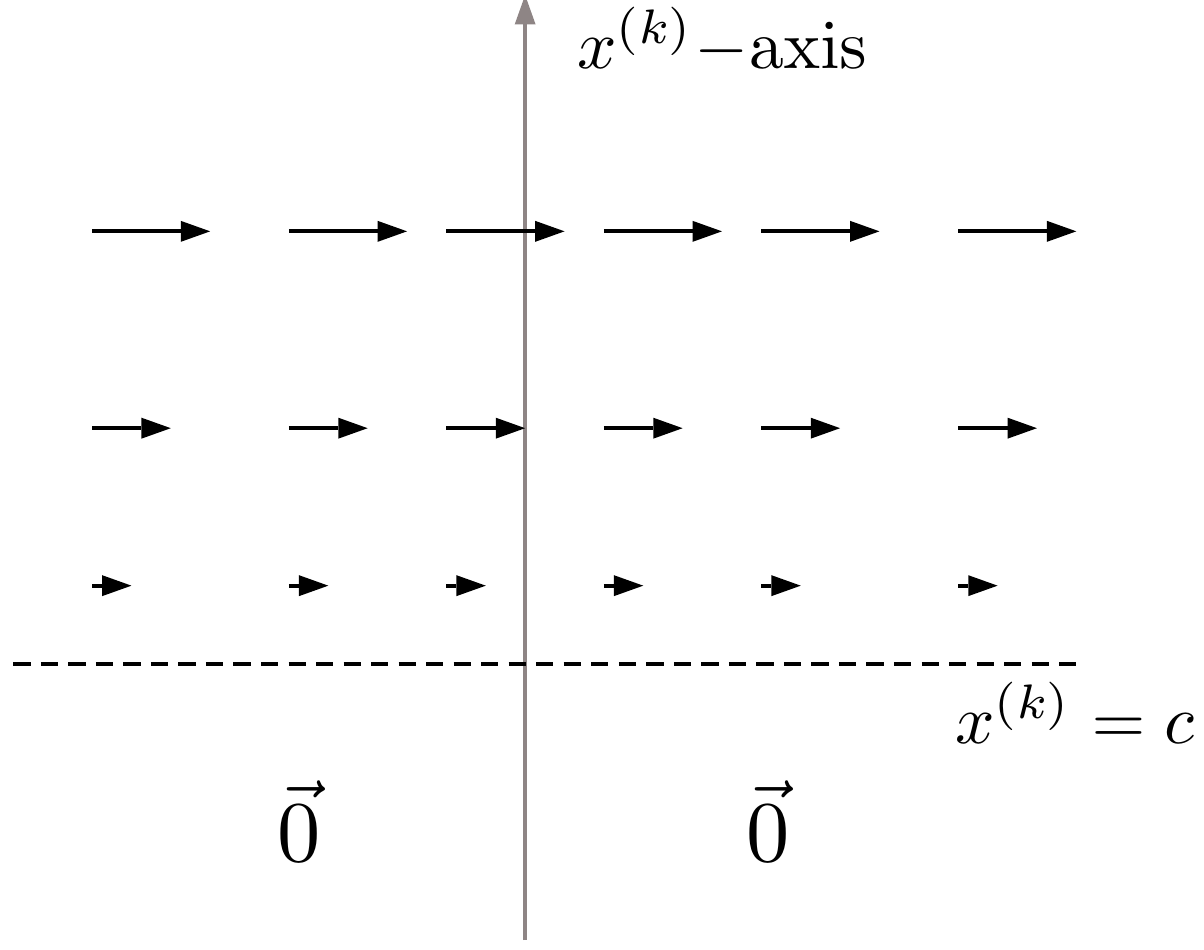}
\caption{}
\end{subfigure}

 \caption{Representation of the elementary vector-fields achieved with $\boldsymbol{\sigma}(Ax+b)$ as in in Subsection \ref{funop}. While the lower half-space remains invariant, the other one experiences a transformation of the form: (A) Expansion; (B) Contraction;  (C) and (D) correspond to translations in opposite directions.}\label{Funstep}
\end{figure}

\section{Classification}\label{SCLASS}
This section is devoted to present Theorem \ref{TH1} and its proof, which allows to classify an arbitrary finite  dataset in strips, according  to their label. The proof is based on combining the essential movements explained in subsection \ref{funop}. For simplicity we state and prove the Theorem for the ReLU \eqref{relu} activation function. However our arguments apply to more general activation functions $\sigma$, as we will make precise later in Remark \ref{remact}.

\begin{theorem}\label{TH1}
 Let $d\geq 2$ and $M\geq 2$ be natural numbers and let $\sigma$ be as in \eqref{relu}.
Let $\{x_i,y_i\}_{i=1}^N\subset \mathbb{R}^d\times\mathbb{R}^d $ be the dataset to be classified. Assume that $x_i\neq x_j$ if $i\neq j$.
 Then, for every $T>0$, there exist control functions $A,W\in L^\infty \left((0,T); \mathbb{R}^{d\times d}\right)$ and $b\in L^\infty\left((0,T),\mathbb{R}^d\right)$ such that the flow associated to \eqref{CTNN}, when applied to all initial data $\{x_i\}_{i=1}^N$, classifies the trajectories by strips \eqref{strips}, according to their labels $\{y_i\}_{i=1}^N$, i.e. $$\phi_T(x_i;A,W,b)\in S_{m_i},$$ $S_{m_i}$ being the subset corresponding to the  label $y_i$, with $i\in\{1,..., M\}$.
 Furthermore, controls are piecewise constant with a finite number of switches of the order of $\mathcal{O}(N)$. Therefore, they also lie in $BV$.
 
\end{theorem}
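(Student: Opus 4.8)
The plan is to exploit the three elementary operations of Subsection \ref{funop} --- \textbf{freeze}, \textbf{translate}, and \textbf{compress/expand} --- concatenated over $\mathcal{O}(N)$ time intervals of equal length $T/(\text{number of steps})$, to drive the finite dataset into the correct strips \eqref{strips}, which are determined by the first coordinate $x^{(1)}$ alone. First I would reduce to the case $T=1$ by scaling (as the excerpt remarks, rescaling time multiplies the controls by $1/T$), and relabel the data so that the induced target ordering on the first coordinate is monotone: since the strips $S_m$ are ordered half-spaces in $x^{(1)}$, I only need to place $\phi_1(x_i)^{(1)}$ in the interval $(\alpha_{m_i-1},\alpha_{m_i}]$. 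The key point making this possible with a \emph{single} set of controls, despite $N$ being arbitrary, is that the nonlinearity lets us move one half-space while freezing the other: at each stage I isolate one data point (or a block of data points sharing a label) behind a hyperplane $\{x^{(k)}=c\}$ parallel to a coordinate axis, freeze everything on one side, and translate the active side.

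The main loop would proceed as follows. Work in an auxiliary coordinate, say $x^{(2)}$ (here $d\ge 2$ is used): since the $x_i$ are pairwise distinct, after an optional preliminary step spreading the points out along $x^{(2)}$, I may assume the values $x_i^{(2)}$ are all distinct, so I can order the data $x_{\pi(1)},\dots,x_{\pi(N)}$ by increasing second coordinate. Then I process the points one at a time from, say, the top: choose a hyperplane $\{x^{(2)}=c\}$ separating $x_{\pi(N)}$ from all the others, freeze the lower half-space (containing $x_{\pi(1)},\dots,x_{\pi(N-1)}$), and on the upper half-space apply a translation \emph{parallel to that hyperplane} in the $x^{(1)}$-direction --- operation (C)/(D) of Figure \ref{Funstep} --- by exactly the amount needed to move $x_{\pi(N)}^{(1)}$ into the interior of its target strip. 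Because the translation is parallel to $\{x^{(2)}=c\}$, it does not change $x_{\pi(N)}^{(2)}$, so the separation is maintained; and because the lower half-space is frozen, the already-correctly-placed points (processed in later iterations) and the not-yet-processed points are untouched. Iterating over all $N$ points, each costing a bounded number of switches (place hyperplane, freeze, translate), gives $\mathcal{O}(N)$ switches total. Finally, since all $N$ target first-coordinates lie in a bounded range and each translation is bounded, the controls $A,W,b$ are piecewise constant, uniformly bounded in $L^\infty$, hence in $BV$.

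A subtlety I would need to handle carefully: when I separate the ``top'' point with a hyperplane and translate its half-space, the \emph{relative} order of the remaining points in the $x^{(2)}$-coordinate must be preserved so the induction goes through --- this is automatic since freezing a half-space is an identity map there, and translating parallel to the separating hyperplane preserves $x^{(2)}$ exactly. A second subtlety is that one must not disturb points already placed in their final strips during later iterations; this is guaranteed provided I always process points in decreasing $x^{(2)}$-order and always freeze the lower (unprocessed and processed) half-space, so the moving region only ever contains the single point currently being placed. I expect the main obstacle --- or rather, the main place where care is needed --- to be the bookkeeping that ensures each elementary step genuinely realizes an exact translation/freeze of the desired half-space with piecewise-constant $A,W,b$ of the announced form ($A,b$ encoding the axis-parallel hyperplane as in item (1) of Subsection \ref{funop}, $W$ a rotation sending $\boldsymbol{\sigma}(Ax+b)$ to the canonical direction), and verifying that the number of data points that can be ``collapsed'' onto a single coordinate value without loss of generality (so that blocks of same-label points can be moved together, saving switches) is handled correctly; but conceptually the argument is just: \emph{sort by an auxiliary coordinate, then sweep through the points one by one, freezing the rest and sliding the current one into place}.
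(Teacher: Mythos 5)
There is a genuine gap in your ``sweep from the top'' loop: after you place the topmost point $x_{\pi(N)}$ in its strip, it \emph{remains} the topmost point in $x^{(2)}$, since your translation was parallel to $\{x^{(2)}=c\}$. When you move on to $x_{\pi(N-1)}$ you must place a separating hyperplane $\{x^{(2)}=c'\}$ with $x_{\pi(N-1)}$ on one side and the unprocessed points $x_{\pi(1)},\dots,x_{\pi(N-2)}$ on the other --- but $x_{\pi(N)}$ is necessarily on the \emph{same} side as $x_{\pi(N-1)}$ (both lie above $c'$). Whichever half-space you freeze, either both $x_{\pi(N-1)}$ and the already-placed $x_{\pi(N)}$ move, or neither does. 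Your stated safeguard --- ``freeze the lower (unprocessed and processed) half-space, so the moving region only ever contains the single point currently being placed'' --- is simply inconsistent with processing in decreasing $x^{(2)}$-order: the already-processed points are in the \emph{upper}, active half-space, not the frozen one. With a single axis-aligned hyperplane in $x^{(2)}$, you can only isolate the current extreme point, and it stays extreme after being placed, so the induction as written does not close.

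There are two ways to repair this. One is to accept that placed points also translate, and prescribe the per-step translation amounts $\delta_1,\dots,\delta_N$ so that the \emph{cumulative} translation seen by $x_{\pi(j)}$ telescopes to its intended displacement (a triangular linear system, trivially solvable since each $\delta_k$ only depends on the data and earlier $\delta$'s); you did not mention this, and without it the loop breaks. The other is what the paper actually does: rather than sorting by $x^{(2)}$, it prepares the data so that all the $x^{(1)}$ coordinates are distinct, then for each misclassified point $x_1$ it first uses \emph{two} hyperplanes in $x^{(1)}$ (one to the right, one to the left of $x_1^{(1)}$) to push \emph{all the other points} downward in $x^{(2)}$, leaving $x_1$ alone at the top; only then does it use a hyperplane $\{x^{(2)}=c\}$ to isolate $x_1$ and translate it in $x^{(1)}$ to its strip. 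The crucial invariant is that all these vertical drops leave the $x^{(1)}$-coordinates of the other points unchanged, so the distinctness in $x^{(1)}$ (and the already-achieved memberships in their strips) are preserved and the recursion goes through cleanly with three sub-steps per point, i.e.\ $\mathcal{O}(N)$ switches. Your proposal is close in spirit --- freeze, slide, iterate --- but misses this ``push the others out of the way first'' device, and without it (or the telescoping fix) the argument is not complete.
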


\begin{remark}[$d=1$]\label{d=1}
Theorem \ref{TH1} requires that $d\geq 2$. If $d=1$ and $x_1<x_2$, then for all choices of the controls $W, A ,b$, the associated flow will fulfill that $\phi_T(x_1;A,W,b)<\phi_T(x_2;A,W,b)$ for all $T>0$. This  is due to the ODE formulation and the uniqueness of solutions for the corresponding Cauchy problem.
Note that this restriction does not necessarily limit time-discrete NNs as, for instance, the one in  \eqref{Euler}, since it does not necessarily fulfill this monotonicity property when the step-size is large.
Our proofs apply in an ODE setting and therefore are limited to $d\geq 2$. As a corollary, they apply as well for the time-discrete ResNets \eqref{Euler}  provided the step-size $h$ is small enough.

\end{remark}

\begin{proof}
As we mentioned above, the actual value of the final time $T>0$ is irrelevant since, by scaling, it can be set to be, in particular, $T=1$.

Therefore we present a strategy that in a finite number of steps allows to achieve the classification in a finite time $T^*>0$. The scheme of the proof is the following:
\begin{enumerate}[font=\bfseries]
 \item[(1)]  \textbf{Preparation of  the dataset}. The goal of this first step is to assure that the data to be classified fulfill the condition:
 \begin{equation}
 \label{separation}
 \forall i\neq j,\quad x_i^{(1)}\neq x_j^{(1)}.
 \end{equation}
 Obviously, for that to hold, one needs to exploit the dynamics of the system by properly choosing the controls. 
  This condition is relevant to assure that data can be efficiently separated by hyperplanes parallel to the Cartesian axes.
 
\smallskip
 \item[(2)] \textbf{Classification}  of the points so that the corresponding trajectories reach the allocated  strips.
 
\smallskip
 \item[(3)]  \textbf{Time rescaling} allows to assure that the goal is achieved in the given final time $T>0$.
 
\end{enumerate}

\smallskip

We now present in more detail the process of each of these steps.
\begin{enumerate}[font=\bfseries]

\item\textbf{Preparation of the dataset.} We need the  data to be classified to fulfill the condition \eqref{separation}.
 If the data $\{x_i\}_{i=1}^N$ do not fulfill this property, by a proper choice of the controls $A, W, b$ in a time $\tau >0$, we may assure that \eqref{separation} is satisfied by $\phi_\tau (x_i; A,W,b),$ $i\in\{1,...,N\}$. 

\smallskip
 \noindent The construction of the controls guaranteeing that this separation property is fulfilled can be made by induction. 
\begin{itemize}
 \item Suppose we have only two points, $x_i$ and $x_j$, sharing  the first component $x^{(1)}$. Since $x_i \ne x_j$, there is a coordinate $k$ for which $x_i^{(k)}\neq x_j^{(k)}$. We then take the mean value  $r=(x_i^{(k)}+x_j^{(k)})/2$ and consider the matrix $A$ and the vector $b$ defined by
 $$A_{ij}=\delta_{jk}\delta_{ik}\quad i, j\in\{1,...,d\}, \hspace{1cm} b_j=r\delta_{jk}\quad j\in\{1,...,d\}.$$
 The hyperplane $\{Ax+b=0\}$ separates the two points $x_i$ and $x_j$.
We choose $W$ such that the resulting vector field $\boldsymbol{\sigma}(Ax+b)$ can change the $x^{(1)}$-component of one of the elements. For doing so, we set 
$$W=\begin{pmatrix}
   1&1&...&1\\
   0&0&...&0\\
   \vdots&\vdots& &\vdots\\
      0&0&...&0                                                                                                                                                                      \end{pmatrix}
$$ acting as a translation in the half-space. Solving the NODE in a sufficiently small time $\tau>0$, we can assure that 
\begin{equation}
\label{separationafter}
\phi_\tau (x_j;A,W,b)^{(1)}\neq \phi_\tau(x_i;A,W,b)^{(1)},\quad \forall i\neq j
\end{equation}
as shown in Figure \ref{fig:prep}.qquad

Note that $\tau>0$ needs to be taken small enough so to guarantee that when assuring that \eqref{separationafter} holds, the other points do not lose the distinction of all their coordinates. 
\begin{figure}

\begin{subfigure}[b]{0.1\textwidth}
 \includegraphics[scale=0.4]{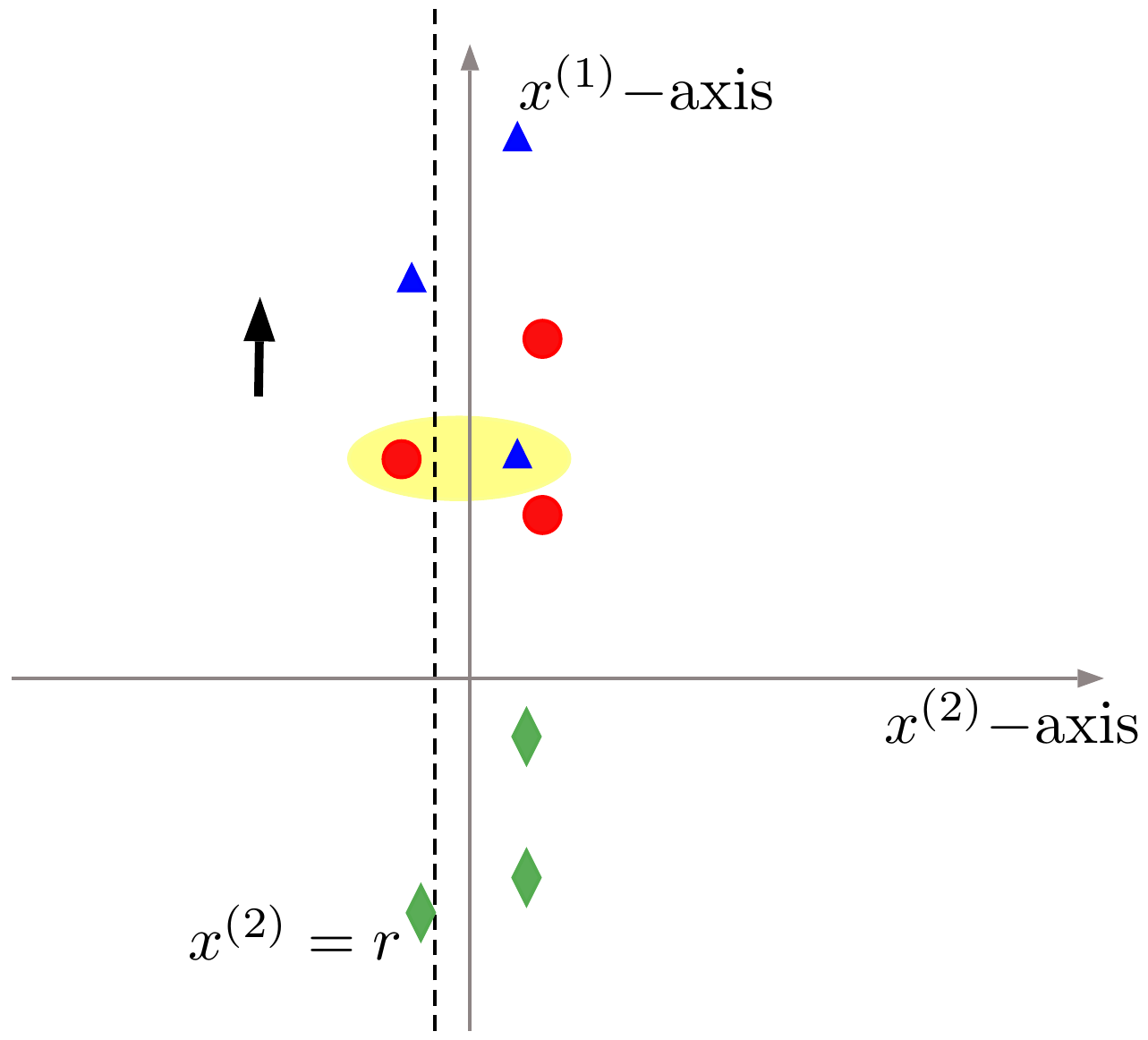}
 \caption{}
\end{subfigure}
\hspace{5cm}
\begin{subfigure}[b]{0.1\textwidth}
 \includegraphics[scale=0.4]{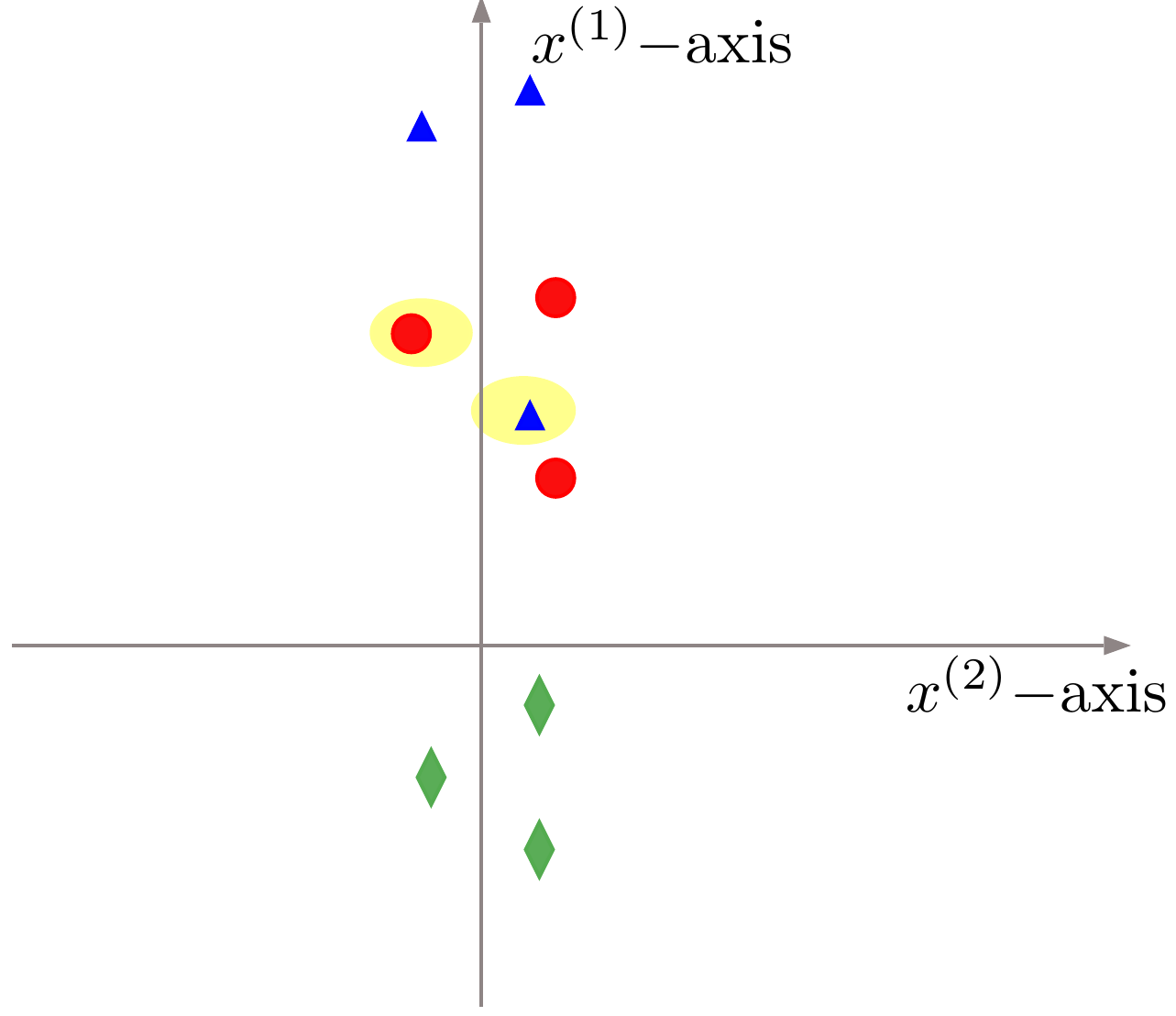}
  \caption{}
\end{subfigure}
\hspace{5cm}

 \caption{(A) Initial configuration in which two points within the yellow cloud share the same component $x^{(1)}$. The hyperplane parallel to the $x^{(1)}$-axis is chosen separating the two points. The vector field induces a vertical upwards translation of the left half-space. (B) Represents the final distribution of points. The yellow clouds indicate the points that initially shared the same $x^{(1)}$-component, a fact that is avoided after the application of the dynamic deformation.}\label{fig:prep}
\end{figure}

\item This argument can be applied recursively on all pairs of data sharing the component $x^{(1)}$.
\end{itemize}

 \noindent Once the separation property is guaranteed we can reset $t=0$ to proceed to classification.
 This first step shows that, without loss of generality, we can assume that the dataset to be classified fulfills the separation property \eqref{separation}.

\smallskip
\item\textbf{Classification.}
        By the previous step we have a set of points that, in particular,  do not share the $x^{(1)}$-component. 
        
\smallskip
 \noindent Recall also that to each initial datum $x_i$, $i\in\{1,...,N\}$, it corresponds the strip $S_{m(i)}$ as a target, associated to its label $m(i)\in \{1,...,M\}$.
 Whenever a datum $x_j$ lies in the set corresponding to its target, i.e. $x_j\in S_{m(j)}$, no action is needed, which corresponds to simply taking $W=0$, i.e. the trivial dynamics. But for each $j\in\{1,...,N\}$ such that $x_j\notin S_{m(j)}$ one needs to force the allocation of the points to the corresponding strip by a suitable choice of the controls.

\smallskip
 \noindent We do it iteratively as follows. We start with any point $x_j\notin S_{m(j)}$. To simplify the notation, let us assume that this is the point $x_1$ corresponding to the index $j=1$. 
Then:
     \begin{enumerate}
      \item we choose the hyperplane $\{Ax+b=0\}$ with $A$ and $b$ of the form
         \begin{equation}\label{tria1}
     A=\begin{pmatrix}
      1 & 0 & ... & 0\\
      0 & 0 & ... & 0\\
      \vdots &\vdots & &\vdots\\
      0 &0 &... & 0
     \end{pmatrix},\quad b=-\begin{pmatrix}
     x_1^{(1)}+r\\
     0\\
     \vdots\\
     0
     \end{pmatrix}
    \end{equation}
where $r>0$ is such that $x_1^{(1)}+r< x_i^{(1)}$ for every $i$ such that $x_1^{(1)} < x_i^{(1)}$.
   We choose $W$ so that the field is parallel to the hyperplane, for simplicity:
         \begin{equation}\label{tria2}
      W=\begin{pmatrix}0 & 0 & \dots &0\\
                       -1& 0 & \dots &0\\
                       0& 0& \dots &0\\
                       \vdots & \vdots& &\vdots\\
                       0& 0& \dots & 0\end{pmatrix}
     \end{equation}
     as in Figure \ref{fig:Induction}.  
          \begin{figure}
                    \begin{subfigure}[b]{0.1\textwidth}
                 \includegraphics[scale=0.4]{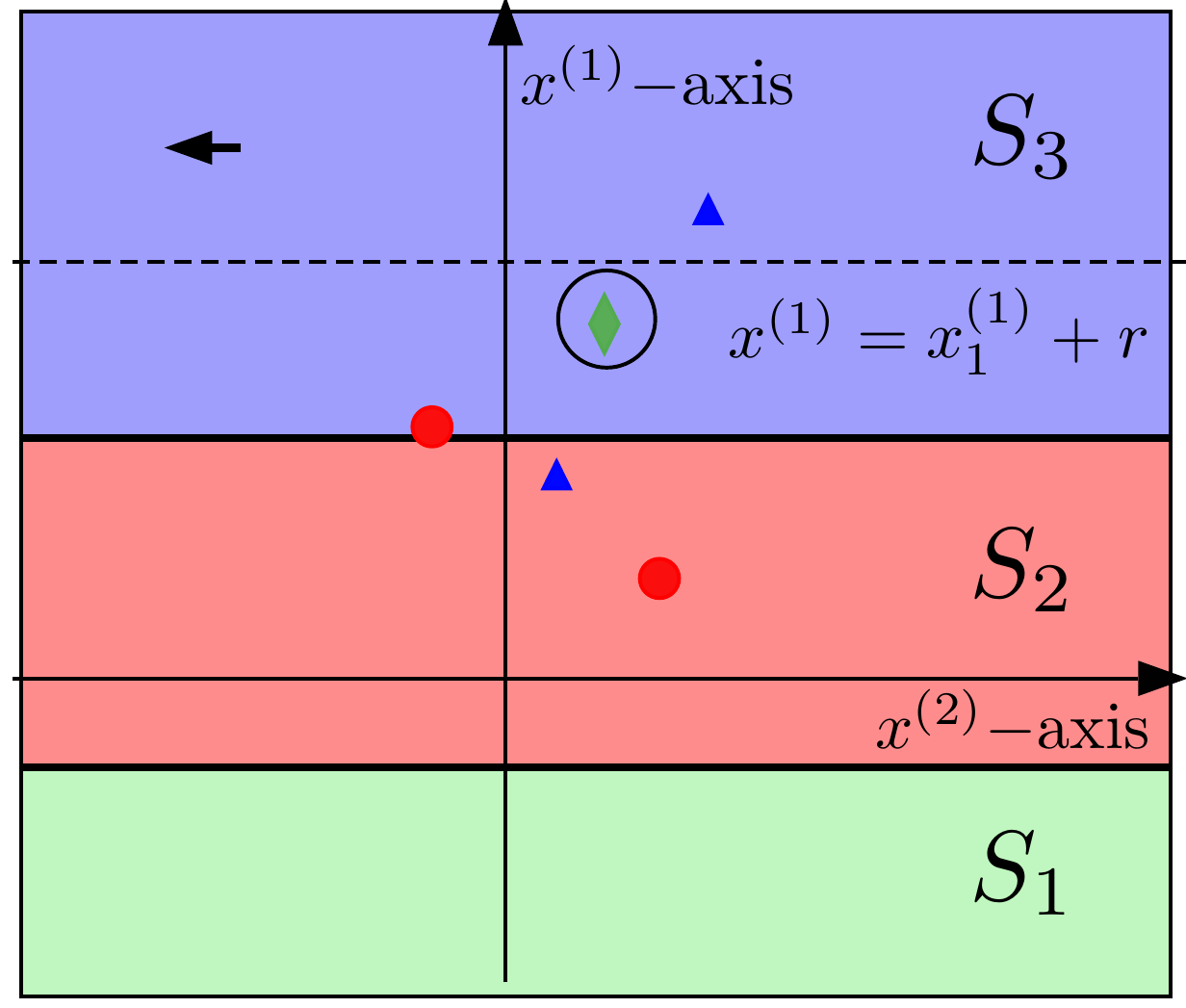}
                 \caption{ }
          \end{subfigure}\hspace{4cm}     
                    \begin{subfigure}[b]{0.1\textwidth}
                 \includegraphics[scale=0.4]{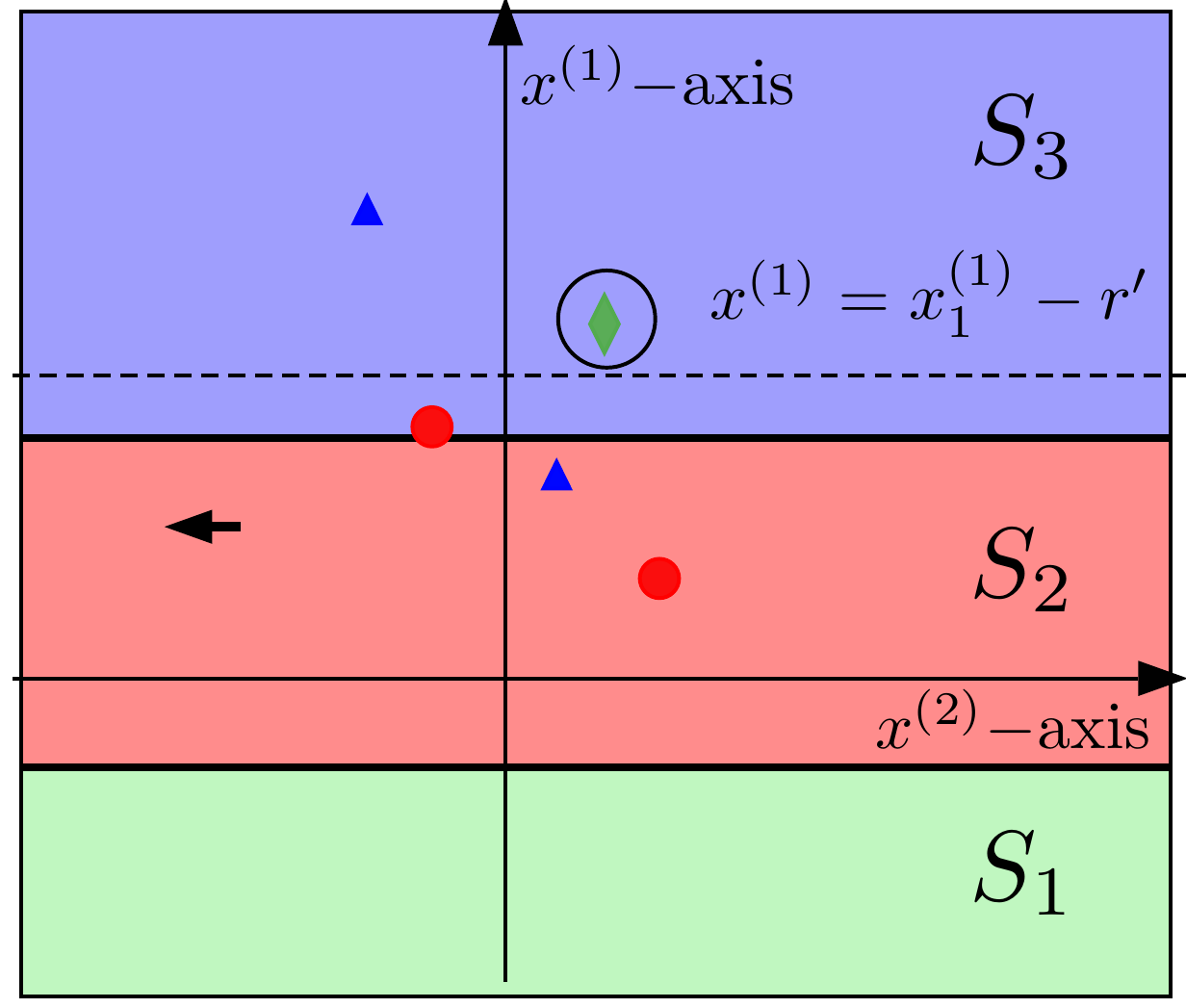}
                     \caption{ }
          \end{subfigure}
          \hspace{2cm}

                    \begin{subfigure}[b]{0.1\textwidth}
                 \includegraphics[scale=0.4]{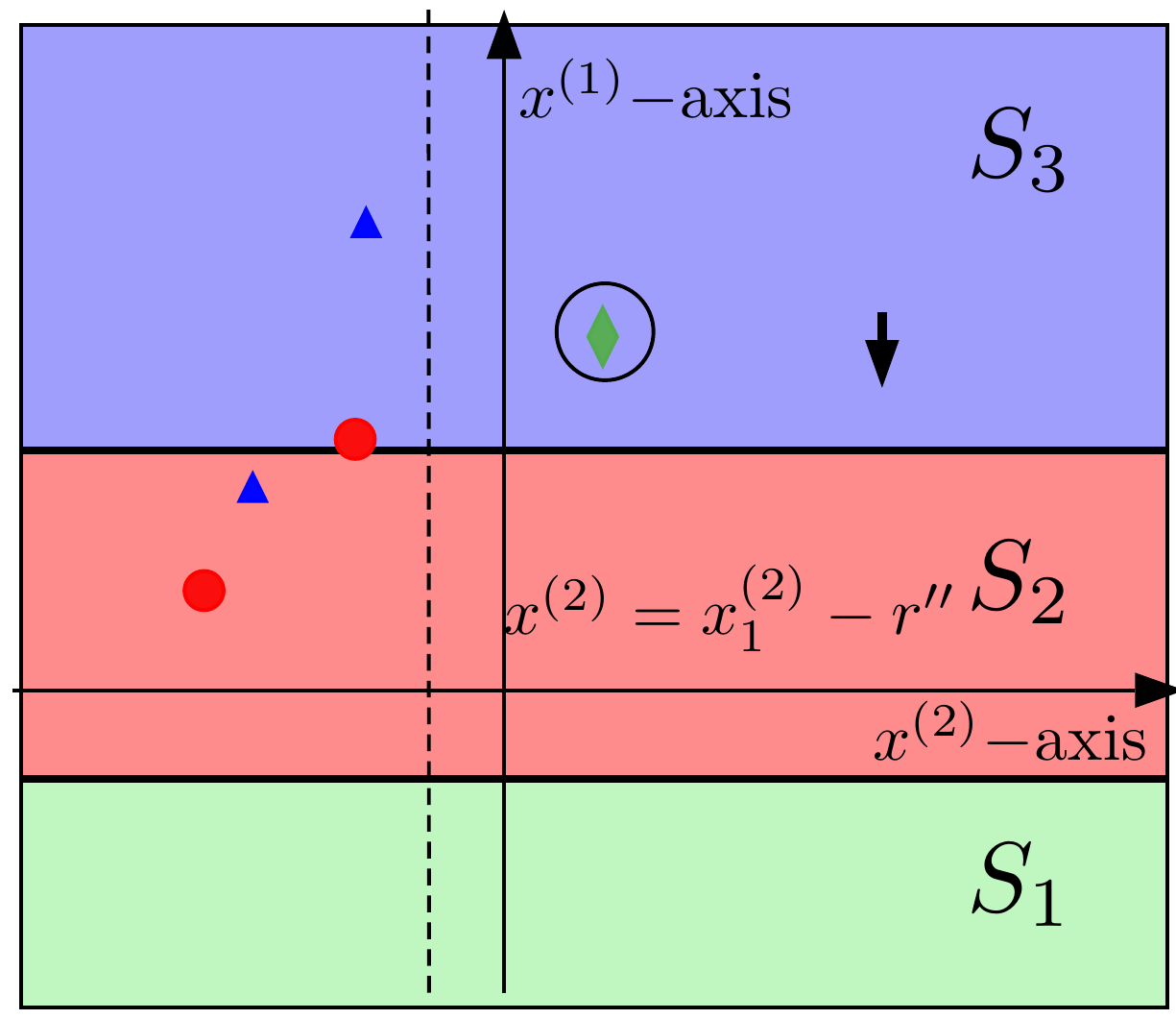}
                \caption{ }
          \end{subfigure}\hspace{4cm} 
                    \begin{subfigure}[b]{0.1\textwidth}
                 \includegraphics[scale=0.4]{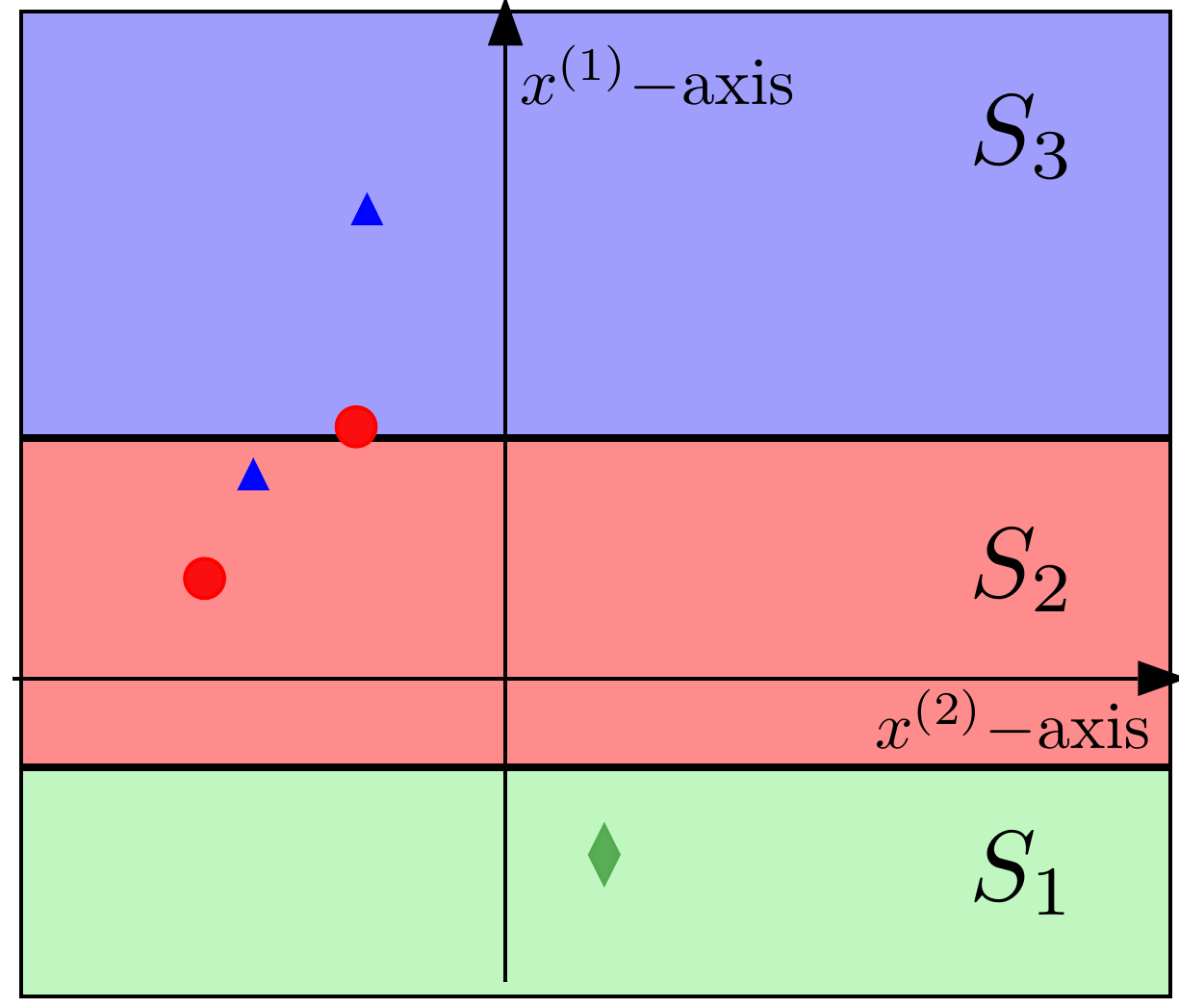}
                 \caption{}
          \end{subfigure}
\hspace{2cm}
       \caption{ (A), (B) and (C) represent the three main steps (a), (b) and (c) and the concatenation  leading to the final configuration of the dataset in (D). In this manner the green diamond that was originally located in the strip $S_3$ ends up lying in the one corresponding to its label (green color) $S_1$. }\label{fig:Induction}
     \end{figure}
     We solve \eqref{CTNN} up to a time $T_1$  so that 
     \begin{equation}\label{criteriaT1}
     \phi_{T_1}(x_i,A,W,b)^{(2)}<x_1^{(2)},\quad\forall i\text{ s.t. } x_1^{(1)}<x_i^{(1)}.
     \end{equation}
        \item  Then, we set $r'$ to be such that $x_1^{(1)}-r'> x_i^{(1)}$ for any $i$ such that $x_1^{(1)}> x_i^{(1)}$ and we choose our controls $A'$ and $b'$ as 
         $$ A'=\begin{pmatrix}
      -1 & 0 & ... & 0\\
      0 & 0 & ... & 0\\
      \vdots &\vdots & &\vdots\\
      0 &0 &... & 0
     \end{pmatrix},\quad b'=\begin{pmatrix}
     x_1^{(1)}-r'\\
     0\\
     \vdots\\
     0.
     \end{pmatrix}
     $$ 
     
In this way $\boldsymbol{\sigma}(Ax+b) $ yields a null field ($=\boldsymbol{0}$) in all points that satisfy $x^{(1)}_i\geq x_1^{(1)}+r'$.

     Now we choose $W'=W$ as before  and we solve \eqref{CTNN} up to a time $T_2$ so that:
     $$\phi_{T_2}(\phi_{T_1}(x_i;A,W,b);A',b',W')^{(2)}<x_1^{(2)}.$$

     \item 
     Then, we can choose a hyperplane represented by 
         $$ A''=\begin{pmatrix}
      0 & 0 & ... & 0\\
      0 & 1 & ... & 0\\
      \vdots &\vdots & &\vdots\\
      0 &0 &... & 0
     \end{pmatrix},\quad b''=\begin{pmatrix}
     0\\
     -x_1^{(2)}+r''\\
     0\\
     \vdots\\
     0
     \end{pmatrix}
     $$      with $r''$ so that the image of $x_1$ under the previous transformations, the one to be properly classified, lies alone to one side of the hyperplane.
We can then choose $W''$ so that the corresponding vector field pushes this point towards $S_{m(1)}$. Note that this procedure not only assures that the image of $x_1$ gets into $S_{m(1)}$ but also allows  to determine  exactly the location of  the first coordinate of that point. We choose a target location $z_1^{(1)}$ in $S_{m(1)}$ such that there is no point $x_i$ fulfilling $x_i^{(1)}=z_1^{(1)}$.
     Moreover, note that his process keeps the $x^{(1)}$ coordinate of all points $i\neq 1$ unaltered while allocating $x_1$ to $S_{m(1)}$ (see Figure \ref{fig:clasfinal} for the final result). 
     Observe that the process above will not change the proper allocation of the points that already lie in the corresponding strip. 
     Applying this construction recursively, in finitely many steps, we can guarantee the correct collocation all points that initially did not fulfill the classification criterion, i. e. such that $x_j\notin S_{m(j)}$.
     

     \begin{figure}
      \includegraphics[scale=0.4]{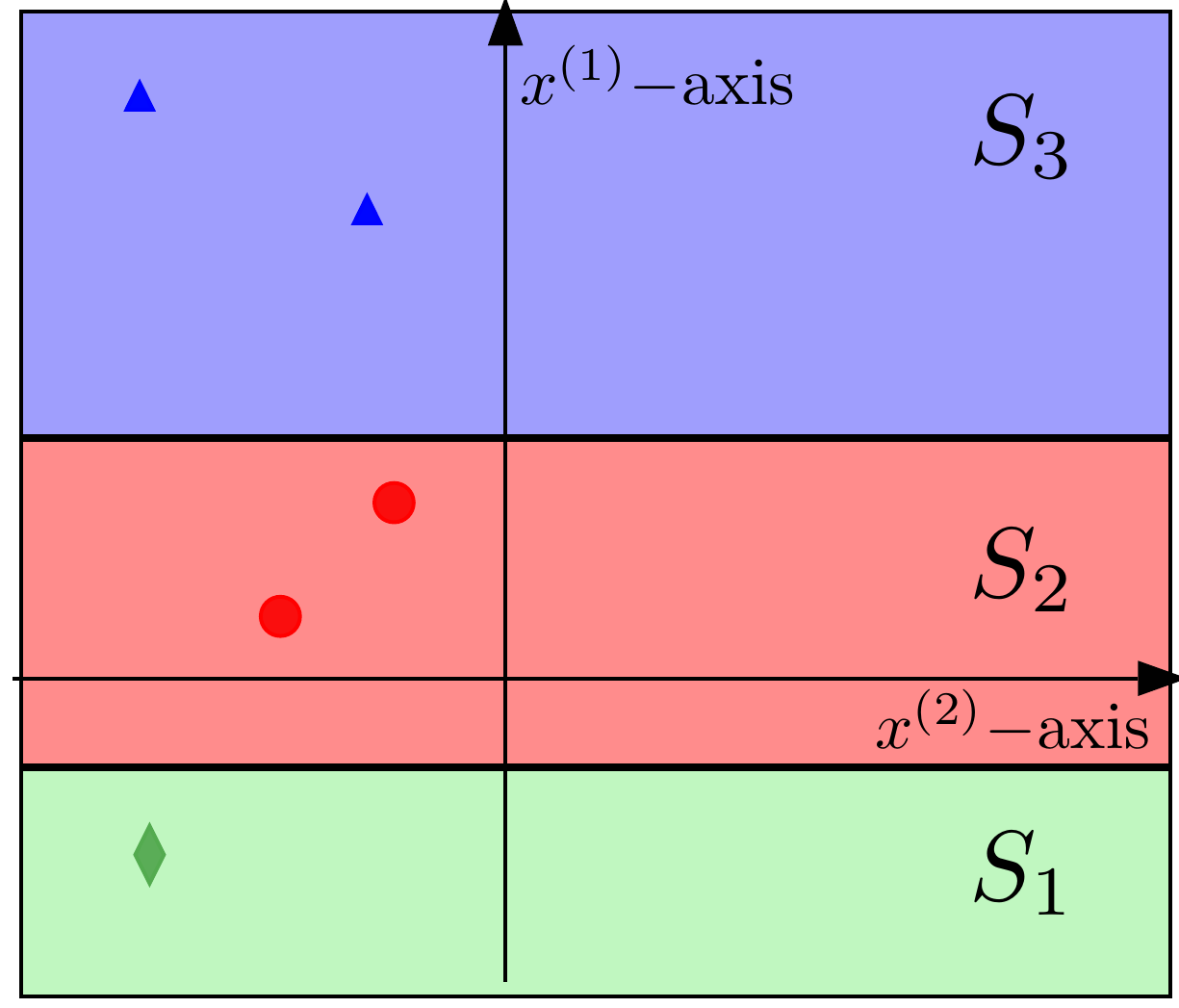}
      \caption{Final configuration after applying recursively the steps shown in Figure \ref{fig:Induction}.}\label{fig:clasfinal}

     \end{figure}

     \end{enumerate}

\item\textbf{ Time rescaling.}\label{rescalingth1} The argument above assures that the complete classification can be achieved in some time $T^*>0$. Classification can be assured in any time $T>0$, by time-scaling, $s=(tT)/T^*$. Indeed, we then have
$$\frac{d}{ds}x=\frac{T^*}{T}W(s)\boldsymbol{\sigma}\left({A}(s)x(s)+{b}(s)\right)\quad s\in(0,T) $$
and the multiplicative  term $T^*/T$ can be absorbed within the control ${W}$, keeping $A$ and $b$ unchanged (other than time rescaling).
\end{enumerate}
\end{proof}

\begin{remark}[On the activation function]\label{remact}
 Although the proof was presented in the particular case of the ReLU, the properties of the activation function $\sigma$  that we used in the proof are  the following:
 \begin{enumerate}
  \item $\sigma$ is Lipschitz continuous,
  \item $\sigma$ vanishes in $(-\infty,0]$,
  \item $\sigma$ is positive in $(0,+\infty)$.
 \end{enumerate}
The above properties of $\sigma$ suffice to assure that the main steps of our proof apply as represented in Figure \ref{fig:Induction}. 
 In fact, these conditions could also be further relaxed to consider activation functions that do not necessarily vanish in $(-\infty,0]$ but simply decay as $s$ tends to $-\infty$. But this would increase the cost of control. We shall not pursue this case here but the main steps of the proof would be essentially the same. \end{remark}

\begin{remark}[Complexity of the controls I]\label{complexity1}
 The controls are piecewise constant, and their complexity can be measured by the number and amplitude of the required switches.
The number of required switches for the classification task is $4N$, where $N$ is the number of points: $N$ for the preparation and $3N$ for the classification.  

Our proof shows that the role of the control $b$ is very much related to the distribution and spreading of the data along the Euclidean space. On the other hand, the controls $W$ and $A$ are of norm one in $L^\infty$. Their complexity can be estimated in terms of the number of switchings, which is reflected on the $BV$-norm. 

In our proof, the cost of the control is measured in terms of the needed time of control, which is the accumulation of the time spent in each iteration.  
The length of the $n$-th time interval $T_n$ for which the control is constant is larger when points whose ordering needs to be swapped are very close to each other, and this leads to a higher cost. Note also that in our proof, we did not exploit the possible structuring of data into clusters. Indeed, when a given point to be classified is embedded into a cluster corresponding to the same label, one may take advantage of that fact to classify the whole cluster simultaneously and, in this manner, reduce the number of switchings.

By the rescaling step \ref{rescalingth1} of Theorem \ref{TH1}, we have seen that classification can be achieved in any time horizon, in particular in $T=1$. A natural way of measuring the cost of control is to fix $T=1$ and then to estimate the $L^\infty$ and $BV$ norm of the controls.

A complete analysis of the cost of classification in terms of the complexity of the data-set is a challenging problem that requires substantial further work. However, we will give an insight of this issue relating it with the control cost in Section \ref{SUA}, Remark \ref{fractalremark}.
\end{remark}

\section{Simultaneous Control}\label{SSCONTROL}

The proof of the  previous classification Theorem shows that, other than classifying the data to the corresponding strips,  we can also,  for instance, completely control the first coordinate $x^{(1)}$ of all points to be classified at the final time, with the same controls.  

Based on that observation, in this section, we explore to which extent the classification can be enhanced to allocate to each datum to be classified a given point of destination. As we shall see, this can be done in an approximate manner, finding controls driving the ensemble of data to be classified as close as we wish from the given final locations chosen. This is an approximate simultaneous or ensemble controllability result. In fact, as we shall see, the exact simultaneous control can be achieved when initial data and targets are well separated. 
\begin{theorem}[Simultaneous Control]\label{TH2}
 Suppose $d\geq 2$. Fix  $T>0$ and let the activation function $\sigma$ be the ReLU \eqref{relu}. 
Let  $\{x_i\}_{i=1}^N\subset \mathbb{R}^d$  be distinct initial data and $\{z_i\}_{i=1}^N\subset \mathbb{R}^d$ be distinct final data. Then there exist piecewise-constant control functions $A,W\in L^\infty \left((0,T); \mathbb{R}^{d\times d}\right)$ and $b\in L^\infty\left((0,T),\mathbb{R}^d\right)$  such that the associated flow to \eqref{CTNN} with the controls $A,W,b$ fulfills:
 $$ \phi_T(x_i;A,W,b)=z_i\quad \forall i\in\{1,...,N\}.$$
 
\end{theorem}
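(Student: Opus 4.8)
The strategy is to bootstrap from the classification Theorem~\ref{TH1}, whose proof already shows that one can prescribe \emph{exactly} the first coordinate $x^{(1)}$ of every data point at the final time, while keeping track of the other coordinates. The plan is to iterate this idea coordinate by coordinate: first drive the points so that their $x^{(1)}$-components take prescribed distinct values (close to, or equal to, $z_i^{(1)}$), then freeze the already-arranged data in one half-space and repeat the construction in the $x^{(2)}$-direction for the remaining data, and so on through all $d$ coordinates. Since $d\geq 2$, at each stage there is always at least one ``spare'' coordinate transverse to the one being adjusted, which is exactly what is needed to place a separating hyperplane parallel to a Cartesian axis that isolates a single point (as in step (c) of the proof of Theorem~\ref{TH1}).

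More concretely, I would proceed as follows. \textbf{Step 1 (separation).} Exactly as in the ``Preparation'' step of Theorem~\ref{TH1}, use finitely many elementary translations to ensure that the $N$ initial data have pairwise distinct first components; simultaneously one may also ensure pairwise distinctness in every coordinate, which will be convenient later. \textbf{Step 2 (fix the last coordinate, one point at a time).} Pick a point $x_i$ whose current position disagrees with $z_i$. Using a hyperplane of the form $\{x^{(1)}=c\}$ to isolate $x_i$ alone on one side (possible because the first components are distinct, after re-separating if necessary), choose $W$ so that the active half-space is translated in the $x^{(d)}$-direction until the $d$-th component of (the image of) $x_i$ equals $z_i^{(d)}$. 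Crucially this leaves the $x^{(d)}$-component of all other points unchanged. Re-separate in $x^{(1)}$ and repeat for the remaining points, so that after $N$ such blocks every point has its $d$-th coordinate already equal to the target value. \textbf{Step 3 (lock and recurse).} Now place a hyperplane $\{x^{(d)}=c\}$ separating the slab containing all the (correctly $d$-positioned) data from everything above it, and henceforth only use controls $W$ whose image in the active region is parallel to that hyperplane, i.e.\ has vanishing $d$-th component — this freezes the achieved $x^{(d)}$-coordinates forever. Within the remaining $(d-1)$ free coordinates, repeat Steps~1--2 to fix the $(d-1)$-st coordinate, then the $(d-2)$-nd, and so on. The recursion bottoms out at the last two coordinates, where Theorem~\ref{TH1}-type moves still work because $d\geq 2$ guarantees a transverse direction. \textbf{Step 4 (time rescaling).} As in Theorem~\ref{TH1}, all of the above is achieved in some finite time $T^\ast$, and a linear time change $s=tT/T^\ast$, absorbed into $W$, yields the result for the prescribed horizon $T$.

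The main subtlety — and the reason the theorem is phrased with an ``approximate'' caveat in the text preceding it — is that the ReLU-based translations act on an entire half-space rigidly, so while moving $x_i$ into place one generically drags along \emph{other} points that happen to share the active half-space in the coordinate being adjusted. One must therefore either (a) argue that at each sub-step the points can be re-separated so that the one being positioned is truly alone on its side of the isolating hyperplane, which is what forces the iteration to be one-point-at-a-time and what produces the ``well-separated'' exact-control statement; or (b) accept that when data and targets interlace, a single rigid translation cannot avoid collisions, and settle for driving each trajectory within $\varepsilon$ of its target by performing the final positioning with a sequence of small moves whose error is controlled. I expect the bookkeeping that simultaneously (i) positions the current point, (ii) preserves all previously-fixed coordinates of all points, and (iii) maintains enough separation among the not-yet-fixed points to keep the hyperplane-isolation argument valid, to be the technical heart of the proof; everything else is a direct reuse of the four elementary operations \textbf{place}, \textbf{freeze}, \textbf{compress/expand}, \textbf{translate} from Subsection~\ref{funop}. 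The number of switches will again be $\mathcal{O}(N)$ per coordinate, hence $\mathcal{O}(dN)$ overall, keeping the controls piecewise constant and of bounded variation.
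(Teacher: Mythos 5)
Your scheme fixes $x^{(d)}, x^{(d-1)}, \ldots$ in decreasing order, postponing $x^{(1)}$ (and $x^{(2)}$) to the very end, and this ordering runs into a structural obstruction. The isolation machinery that lets you reposition a \emph{middle} point in the $x^{(1)}$-direction is precisely the (a)--(b)--(c) trick in the proof of Theorem~\ref{TH1}, and it works by shoving the other points down in $x^{(2)}$, i.e., it necessarily scrambles $x^{(2)}$. If by the time you attack $x^{(1)}$ you have already pinned $x^{(2)}$ (or, at the bottom of your recursion, you intend to pin both at once), you have no transverse scratch coordinate left in which to shove; and you cannot compensate by compressing or expanding along $x^{(1)}$, since a flow of the form $\dot{x}^{(1)} = w\,\sigma(\pm(x^{(1)}-c))$ preserves the $x^{(1)}$-ordering (cf.\ Remark~\ref{d=1}), whereas the targets' first coordinates need not respect that ordering. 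The paper resolves this by reversing your ordering: it first does the ``hard'' classification-style pass that sets $\phi(x_i)^{(1)} = z_i^{(1)}$ exactly (during which $x^{(2)}$ is freely scrambled), and only then performs a single left-to-right sweep of hyperplanes $\{x^{(1)}=c\}$ that controls each point's full transverse vector $(x^{(2)},\ldots,x^{(d)})$ simultaneously via the $d-1$ free entries of the first column of $W$, freezing each point behind the advancing hyperplane as soon as it is placed. Doing all transverse coordinates in a single sweep also brings the switch count down to $\mathcal{O}(N)$, versus the $\mathcal{O}(dN)$ your per-coordinate recursion would incur.

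There is a second missing ingredient: the sweep requires the $z_i^{(1)}$ to be pairwise distinct, which need not hold for an arbitrary set of distinct target vectors. The paper obtains this by \emph{preparing the targets}: since the backward NODE is again a forward NODE under the time-reversed controls $W(t)=-W'(T-t)$, $A(t)=A'(T-t)$, $b(t)=b'(T-t)$, the preparation argument of Theorem~\ref{TH1} can be applied to $\{z_i\}$ in reverse time, producing a modified target set with distinct first coordinates at the cost of $\mathcal{O}(N)$ extra switches. Without this time-reversal observation the argument does not close whenever some $z_i^{(1)}$ coincide, and your proposal does not supply a substitute. A smaller remark: the theorem you are proving is an \emph{exact} simultaneous control statement for distinct targets --- the ``approximate'' version you allude to is the content of the subsequent remark and is deduced from the exact one by perturbing any coinciding targets; there is no hidden approximation in Theorem~\ref{TH2} itself.
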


\begin{remark}Several remarks are in order:
\begin{itemize}
\item Note that it is impossible to bring two different points to the same target by means of a Lipschitz vector field. Therefore, the control to distinct targets is the sharpest result one can achieve. 
\item By continuity, this also implies the approximate simultaneous control when the initial data or targets are not completely distinct. 
Indeed, let us assume for instance that  $z_i=z_j$. We can then slightly modify these targets to new points  points $ z_j'=z_j+\boldsymbol{\epsilon}$, with $|\boldsymbol{\epsilon}|\leq \epsilon$ arbitrarily small.
 The exact simultaneous control of the system to the targets  $\{z_i'\}_{i=1}^N\subset \mathbb{R}^d$ is then feasible and assures the control to a distance 
$\epsilon$ of the original targets  $\{z_i\}_{i=1}^N\subset \mathbb{R}^d$.

In other words, even when the targets $\{z_i\}_{i=1}^N\subset \mathbb{R}^d$ are not distinct, given any $\epsilon>0$, there exist control functions $A, W\in L^\infty \left((0,T); \mathbb{R}^{d\times d}\right)$ and $b\in L^\infty\left((0,T),\mathbb{R}^d\right)$ (depending on $\epsilon$) such that the associated flow to \eqref{CTNN} with the controls $A,W,b$ fulfills:
 $$ \phi_T(x_i;A,W,b)=z_i+\boldsymbol{\epsilon}\quad \forall i\in\{1,...,N\}, \quad \|\boldsymbol{\epsilon}\|=\epsilon.$$
 
\end{itemize}
\end{remark}

\begin{proof}
We proceed in several steps.
\begin{enumerate}[font=\bfseries]
 {\color{black}\item \textbf{Preparing the target.}
 The backward NODE
 \begin{equation*}
  \begin{cases}
   \dot{x}=W'\boldsymbol{\sigma}(A'x+b')\\
   x(T)=z
  \end{cases}
 \end{equation*}
is also a solution of the forward NODE \eqref{CTNN} by the controls $W(t)=-W'(T-t)$, $A(t)=A'(T-t)$ and $b(t)=b'(T-t)$. Therefore, we can apply the same argument done in Step 1 of Theorem \ref{TH1} for preparing the targets. At the price of $N$ extra switches, we can find controls $W,A$ and $b$ such that the targets fulfill
$$ z_i^{(1)}\neq z_{j}^{(1)}\qquad \forall j\neq i.$$
 }
 \item \textbf{Control the first component.} Applying the arguments in the proof of Theorem \ref{TH1}, we may control the first component  $x^{(1)}$ of each trajectory. In this way we can deduce  there existence of  $A,W$ and $b$ such that:
$$\phi_T(x_i;A,W,b)^{(1)}=z_i^{(1)}\quad \forall i\in\{1,...,N\}.$$

\begin{figure}

\hspace{-3cm}\begin{subfigure}[b]{0.1\textwidth}
 \includegraphics[scale=0.4]{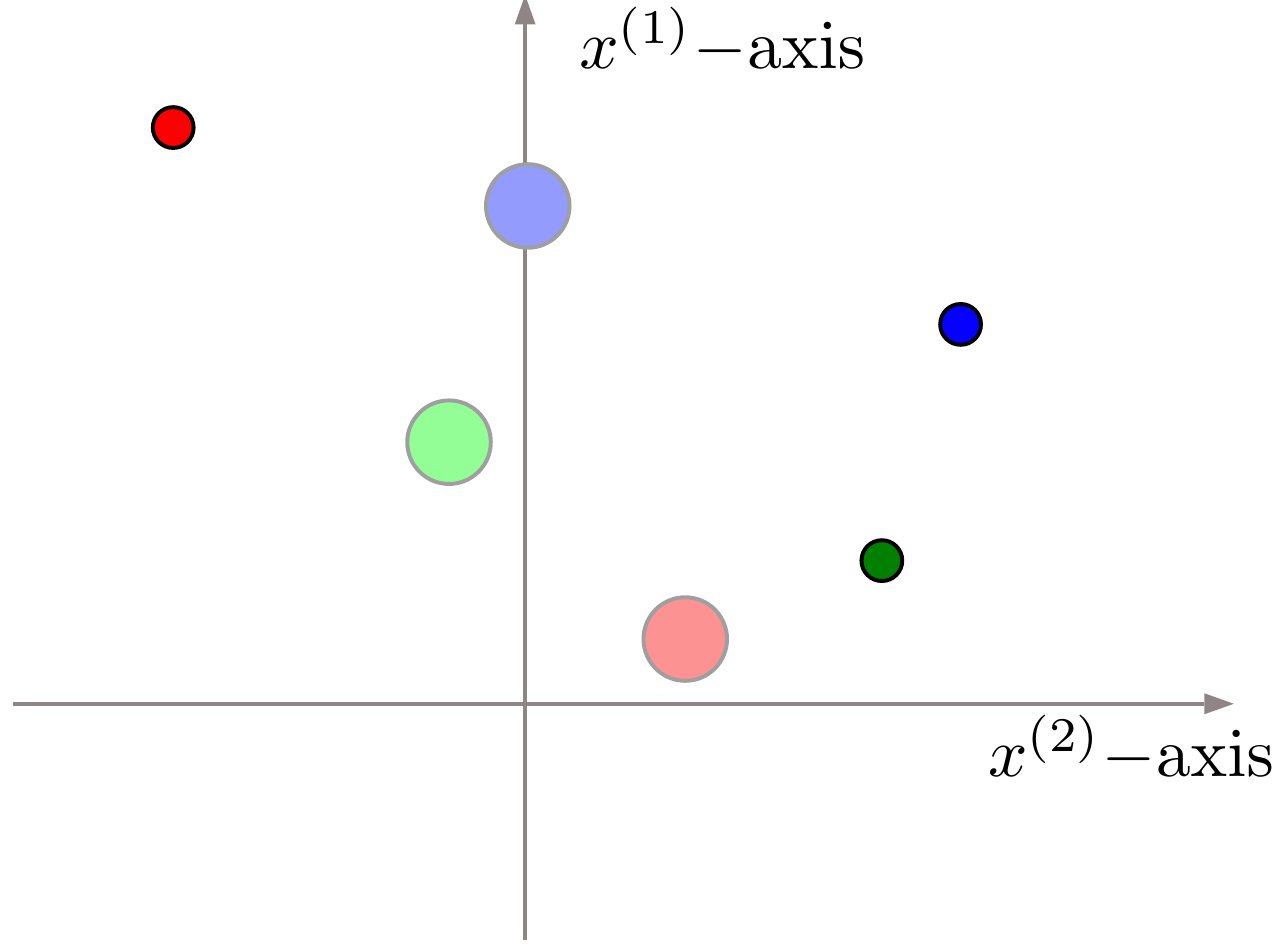}
  \caption{}
\end{subfigure}
\hspace{7cm}
\begin{subfigure}[b]{0.1\textwidth}
  \includegraphics[scale=0.4]{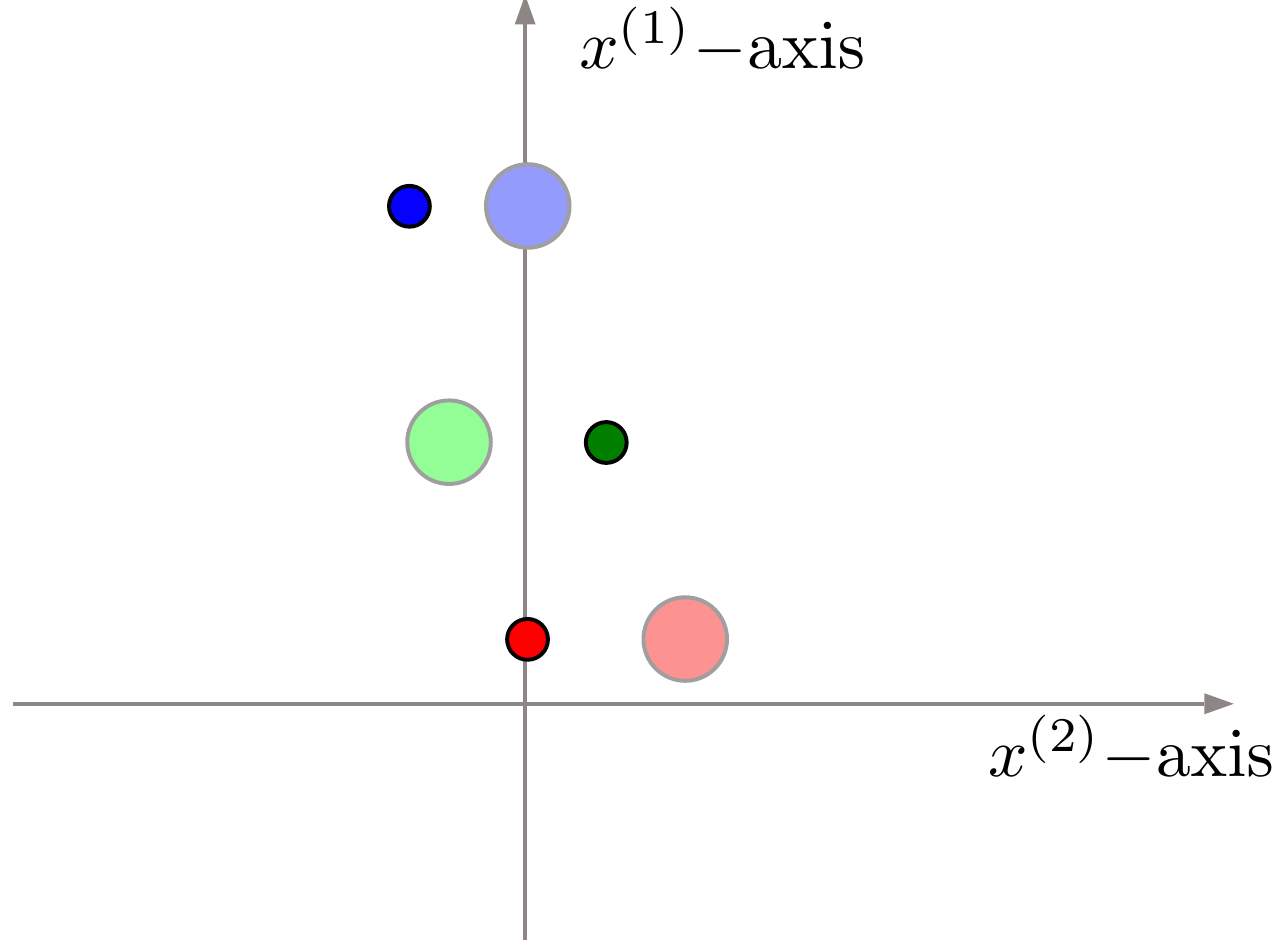}
  \caption{}
\end{subfigure}

 \caption{(A) Initial configuration for the simultaneous control before applying Theorem \ref{TH1}. The big colored circles represent the target locations for each point, denoted by small circles. (B) Configuration after controlling the $x^{(1)}$-component of each point. }\label{initsimcon}
\end{figure}

\item \textbf{Controlling to the targets.}\label{contrtotarget}
Now, we consider the hyperplanes $\{x^{(1)}=c\}$, for $c$ negative enough.
Let us take
$$ A'=\begin{pmatrix}
      1 & 0 & ... & 0\\
      0 & 0 & ... & 0\\
      \vdots &\vdots & &\vdots\\
      0 &0 &... & 0
     \end{pmatrix},\quad b'=\begin{pmatrix}
                              -c\\
                              0\\
                              \vdots\\
                              0
                             \end{pmatrix}
$$
We use $W'$ to rotate the field in the direction to control the point $\phi_T(x_1;A,W,b)$ to its target $z_1$ with time $T'>0$.

 Then we set $c=z_1^{(1)}+\delta$ for $\delta$ small enough and we consider the hyperplane $x^{(1)}=c$ and apply the same argument, finding $W''$ that controls $\phi_{T'}(\phi_T(x_2;A,W,b);A',W',b')$ to $z_2$. Note that $z_1$ remains static along this deformation (see Figure \ref{fig:simcon}).

 Since $z_i^{(1)}\ne z_j^{(1)}$ for all $i, j$ holds, we can proceed inductively.  After a time-rescaling, as in Theorem \ref{TH1}, we conclude the proof of the simultaneous controllability. 

\begin{figure}
\hspace{-3cm}\begin{subfigure}[b]{0.1\textwidth}
  \includegraphics[scale=0.4]{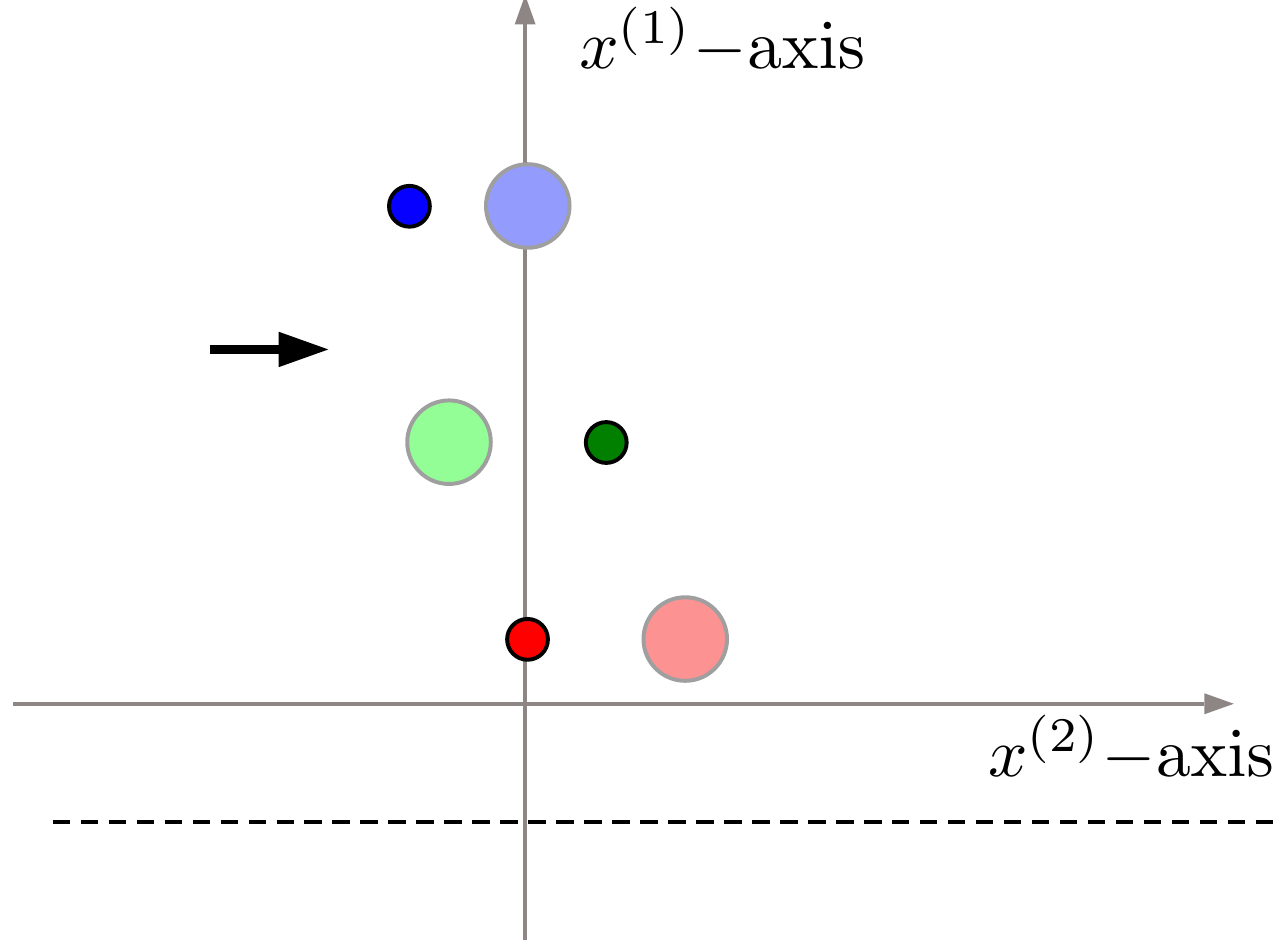}
  \caption{}
\end{subfigure}\hspace{7cm}
\begin{subfigure}[b]{0.1\textwidth}
  \includegraphics[scale=0.4]{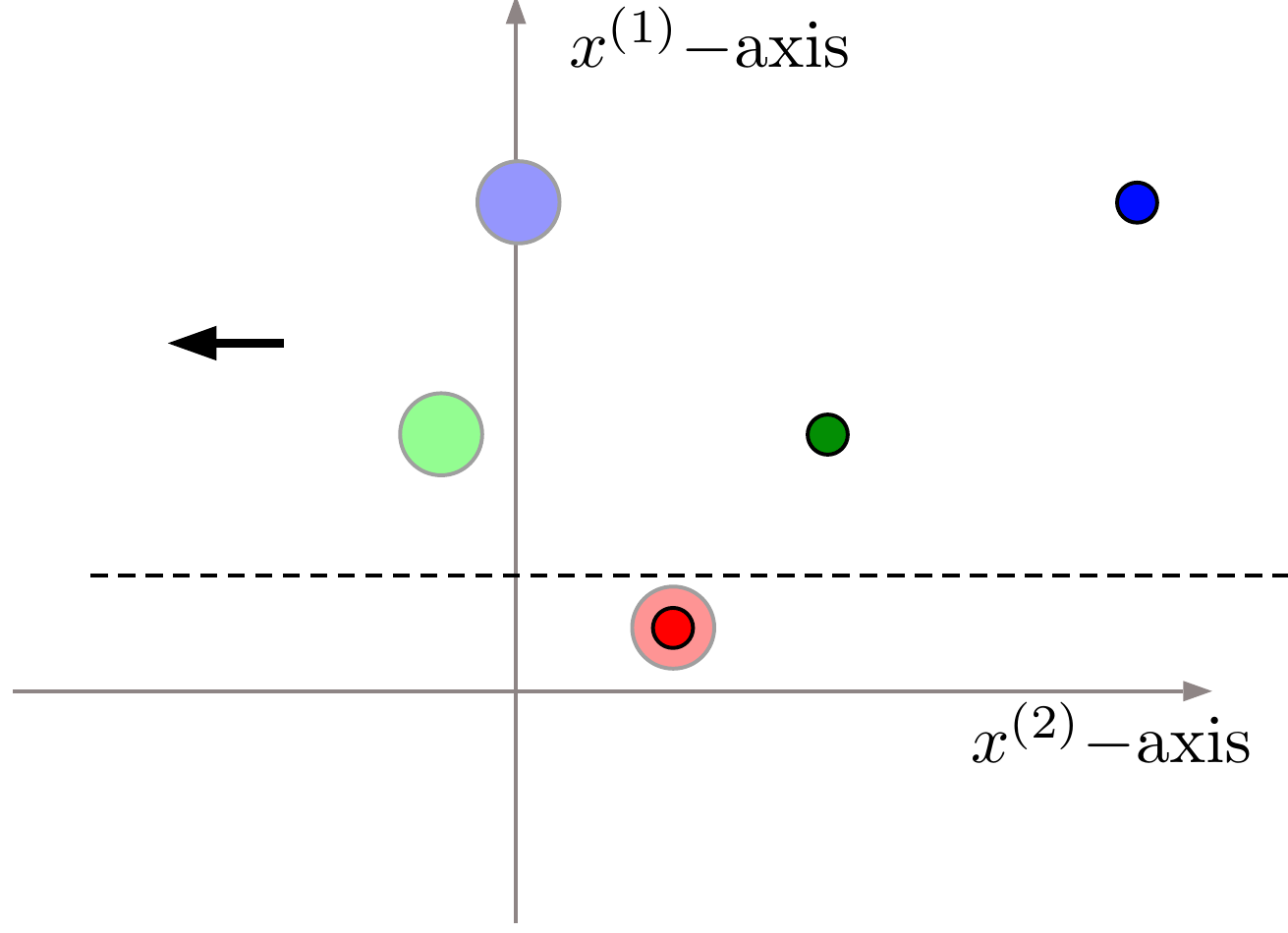}
    \caption{}
\end{subfigure}

\hspace{-3cm}\begin{subfigure}[b]{0.1\textwidth}
  \includegraphics[scale=0.4]{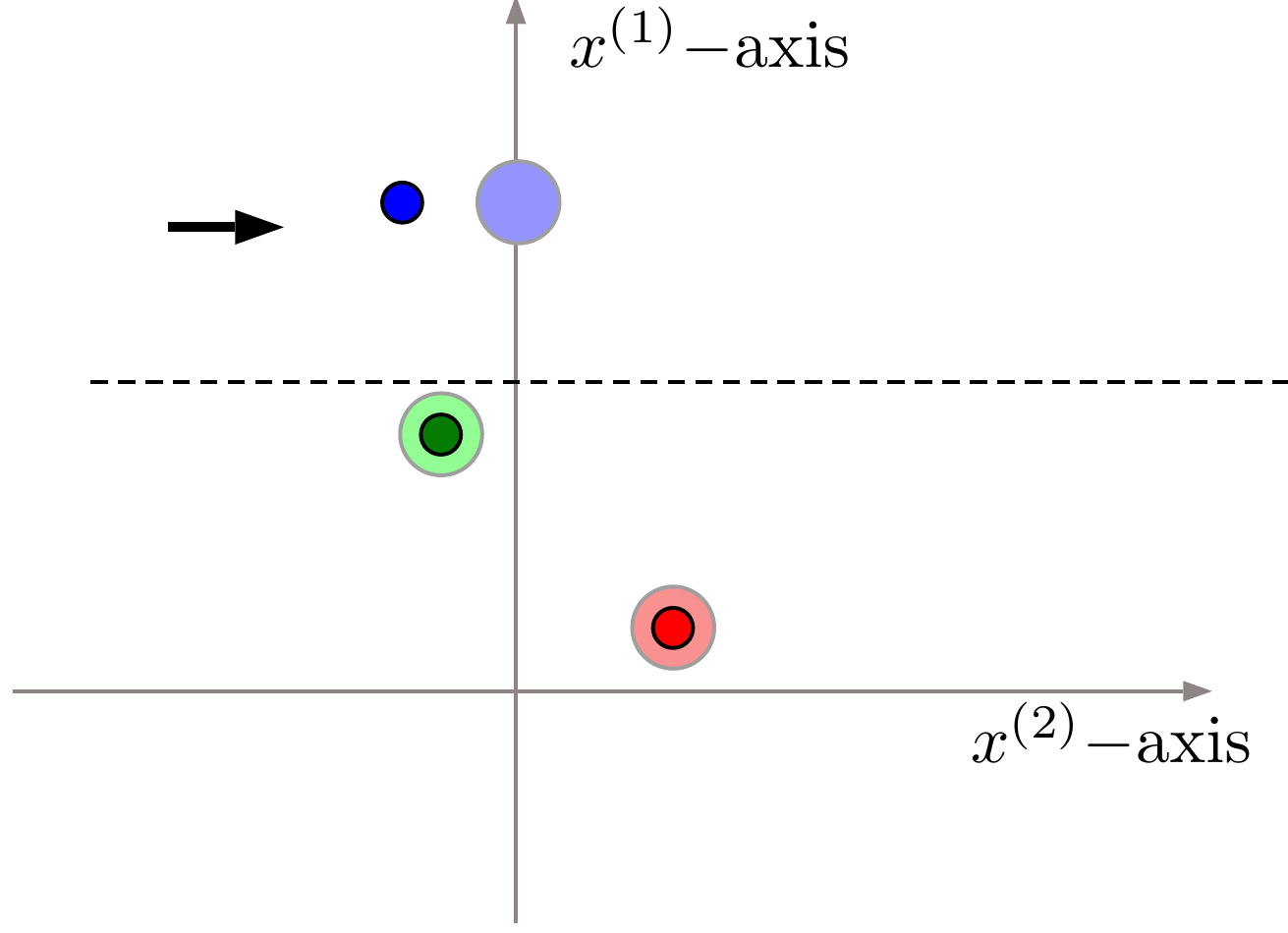}
    \caption{}
\end{subfigure}\hspace{7cm}
\begin{subfigure}[b]{0.1\textwidth}
  \includegraphics[scale=0.4]{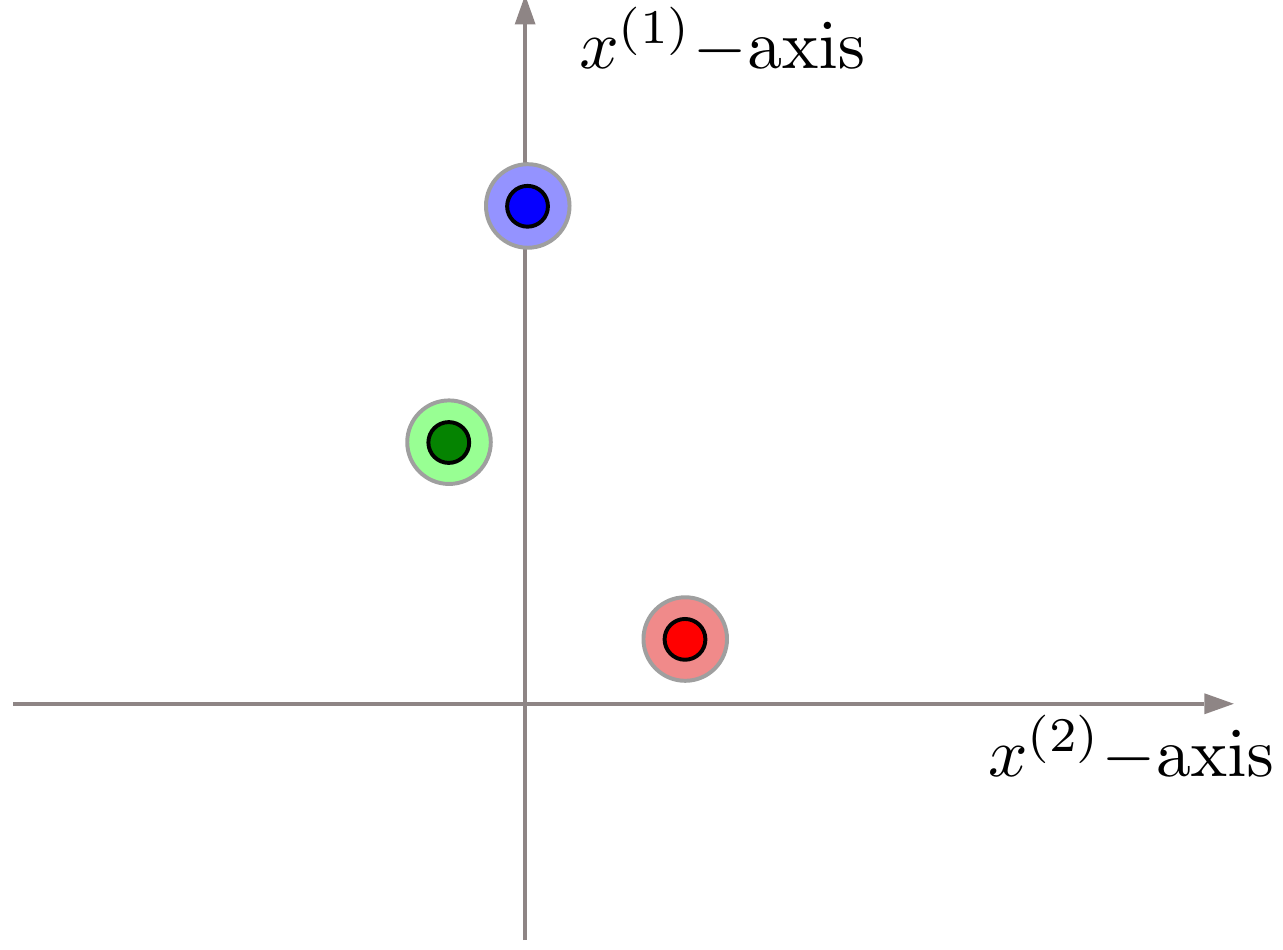}
    \caption{}
\end{subfigure}

\caption{Iterative process leading to the simultaneous controllability. The colored big circles represent the targets, the small ones the data to e controlled, the dashed line is the chosen hyperplanes.}\label{fig:simcon}
\end{figure}
\end{enumerate}

\end{proof}

 \begin{remark}[On the activation function] As in the context of Theorem \ref{TH1}, although Theorem \ref{TH2} was stated for the ReLU \eqref{relu} activation function,  it can be extended to more general activation functions as in Remark \ref{remact}.
 \end{remark}
\begin{remark}
The classification result in Theorem \ref{TH1} is a direct consequence of the simultaneous controllability result in Theorem \ref{TH2}. In fact, as a corollary of Theorem \ref{TH2}, the classification can be achieved for an arbitrary partition  $\mathcal{S}$  of the Euclidean space and any allocation of labels.
\end{remark}

\begin{remark}[Complexity of the controls II]\label{complexity2}
Note that, with respect to the classification result and Remark \ref{complexity1}, the number of switches in this simultaneous control result has increased by $2N$, remaining of the order of $N$. However, its norm  will depend also on the chosen targets. 
The reason is that the gap for the choice of a  hyperplane in step \ref{contrtotarget} of the proof of Theorem \ref{TH2} can be small, depending on the target configuration. This will imply the smallness of  the vector field chosen, and therefore the needed time will be larger.

 Note however, that this discussion is specific for the controls we have constructed. The analysis of optimal control strategies is a challenging open problem.

\end{remark}

    \begin{remark}[Nonlinearities with drift]  Note that the simultaneous control result we have proved  would also hold in an approximate manner for systems $$\dot{x}=f(x)+W(t)\boldsymbol{\sigma}(A(t)x+b(t))$$ with $f$ bounded. Indeed, the control $W$ can enhance the effect of the nonlinearity making $f$ negligible.  Whether an exact simultaneous control result in the spirit of Theorem \ref{TH2} can be obtained in this setting is an interesting open problem. 
    \end{remark}
    
    \begin{remark}[Sparsity]
    The proof above is built-up in consecutive steps. In each step we employ only an hyperplane and a direction of motion for the  flow. The proof does not require to activate at all time the whole ensemble of components of the controls $A, W, b$.
    
To be more precise, let us consider a hyperplane with normal vector  $a$ and displaced by $b$  from the origin.
Then, the sign of scalar product $\langle a,x-b\rangle$ determines  whether $x$ is located at one side or the other of the hyperplane. Therefore, applying the scalar function $\sigma$ to $\langle a,x-b\rangle$, one obtains either $0$ if $x$ is in one side of the hyperplane or a positive one if $x$ is in the other one.
    
    One can then set the direction of motion of the active half-space by choosing a vector $w\in \mathbb{R}^d$, generating a dynamics of the form:
    $$\dot{x}=w\sigma\left(\langle a,x-b\rangle\right)$$
     In the expression above only $3d$ controls are activated. But, taking into account that $\langle a,b\rangle$ is a scalar value,  the total number of active controls is $2d+1$.
    
  As a conclusion of this discussion we see that our proofs hold with controls  $W, A$ and $b$ with the following particular structure:   
    $$ W=\begin{pmatrix}
          w_1(t)\\
          w_2(t)\\
          \vdots\\
          w_d(t)
         \end{pmatrix}(1,1,\cdots,1),\quad A=\mathrm{diag}\begin{pmatrix}a_1(t)\\\ a_2(t)\\ \vdots\\a_d(t)\end{pmatrix},\quad b(t)= \tilde{b}(t)\begin{pmatrix}
                                    a_1(t)\\
                                    \vdots\\
                                    a_d(t)\end{pmatrix}$$
    where $w_i,a_i,\tilde{b}\in L^\infty((0,T),\mathbb{R})$, involving a total of $2d+1$ controls.
    
    The form of dynamics \eqref{CTNN} is richer since it allows for more general matrices $W, A$. But the arguments above show that the actual dimension of the needed controls can be drastically diminished. This is a manifestation of the fact that the results own this paper can be achieved assuring further sparsity or polarization conditions on the controls.

    \end{remark}

\section{Universal Approximation}\label{SUA}

In this section we prove the Universal Approximation Theorem using the methods developed to treat simultaneous controllability.

We mainly consider simple functions of the form 
\begin{equation}\label{simple}
 f=\sum_{m=1}^M \alpha_m\chi_{\Omega_m},
 \end{equation}
as in Figure \ref{fig:UA1}. 
\begin{figure}
 \includegraphics[scale=0.35]{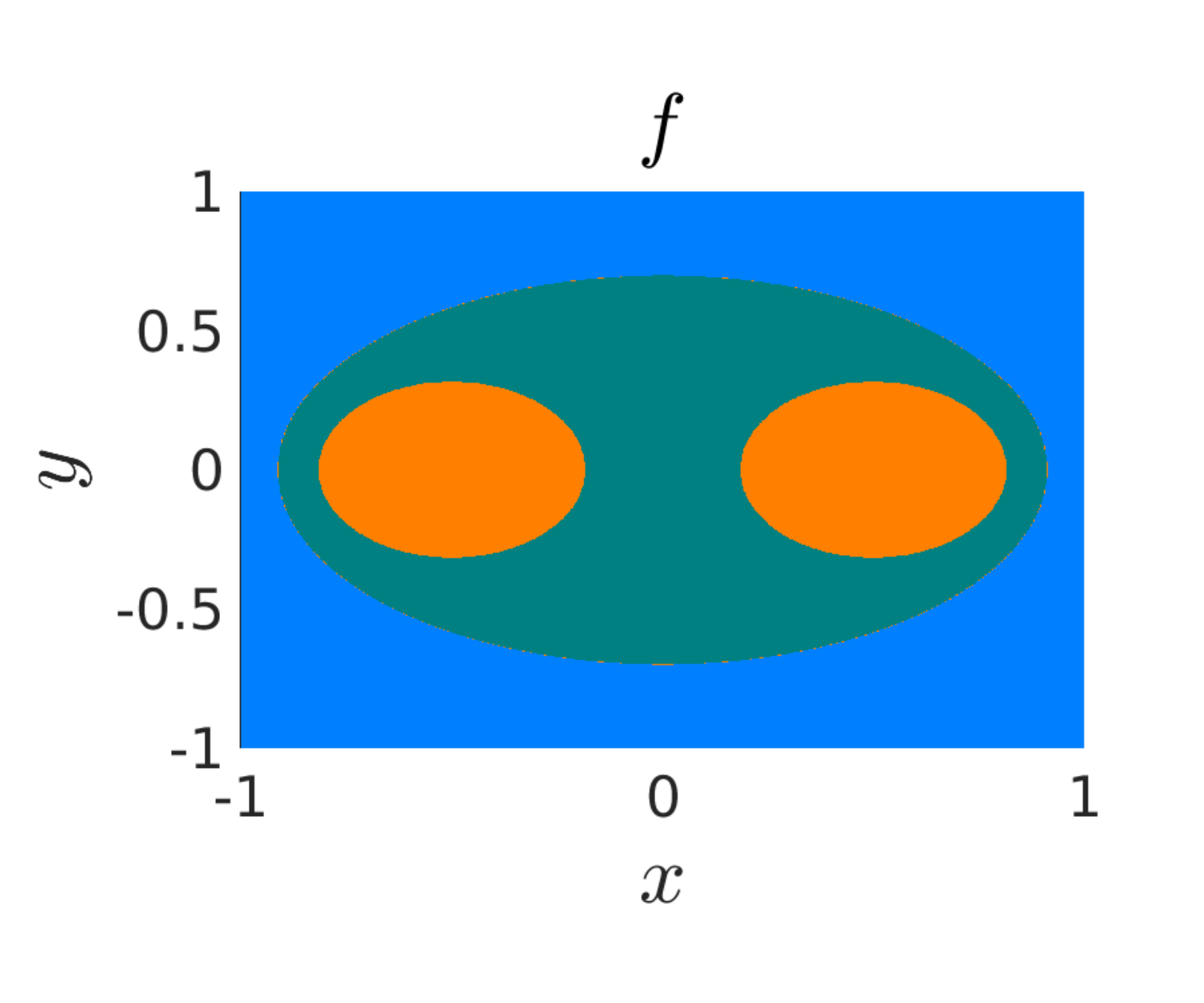}
 \caption{Simple function in the square $\Omega = [-1, 1] \times [-1, 1]$, $M=3$, each color represents a set $\Omega_m$.}\label{fig:UA1}
\end{figure}
Our goal is to show how, controlling a NODE, such a function can be approximated in $L^2_{\mathrm{loc}}$. Obviously, by density, the result extends immediately to any bounded measurable function. 
But, as we shall see, the complexity of the controls we will build depends on the structure of the simple function and, more precisely, on the box-counting dimension of the boundaries of  the sets $\Omega_m$.

\smallskip
\noindent In the context of NODEs the goal of approximating  a function can be interpreted as a simultaneous control problem for an uncountable ensemble of trajectories. We do it by viewing the values $f$ at each point $x$ as the target or label that each $x$ have to be associated with along the trajectories of the NODE. Consequently, the proofs of the main results of this section will employ the ideas of the previous ones, but with substantial added work.

\smallskip
\noindent For simplicity, we assume that all the boundaries of  the sets $\Omega_m$ constituting the simple target function $f$, have a finite perimeter.  Later on in Remark \ref{fractalremark} we shall show that the same proof works under weaker conditions, to the price of increasing the complexity of the needed controls. We will see how the box-counting dimension of the boundary of the characteristic sets arises in the control cost. The approximation can only be guaranteed over bounded sets $\Omega$, as we shall see in  Section \ref{Transport} (Remark \ref{impUA}). Otherwise the use of unbounded controls would be required.

More precisely, the main result of this section is as follows:

\begin{theorem}[Universal Approximation]\label{TH3}
 Let $d\geq 2$ and $T>0$.  Let $\Omega\subset\mathbb{R}^d$ be a bounded set, and $\sigma$ be the ReLU activation function in \eqref{relu}. 
 Then, for any $f\in L^2(\Omega;\mathbb{R}^d)$ and $\epsilon>0$ there exist $A,W\in L^\infty((0,T);\mathbb{R}^d)$ and $b\in L^\infty((0,T);\mathbb{R}^d)$ such that the flow generated by \eqref{CTNN}, $\phi_T(\cdot;A,W,b)$,  satisfies:
\begin{equation}\label{aproxepsilon}
\|\phi_T(\cdot;A,W,b)-f(\cdot)\|_{L^2(\Omega)}<\epsilon.
\end{equation}
 \end{theorem}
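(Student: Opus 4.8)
The plan is to reduce the approximation of an arbitrary $f\in L^2(\Omega;\mathbb{R}^d)$ to the case of a simple function, and then to reduce the simple-function case to the simultaneous control result of Theorem \ref{TH2}. First I would invoke the density of simple functions in $L^2(\Omega;\mathbb{R}^d)$: given $\epsilon>0$, pick a simple function $f_\epsilon=\sum_{m=1}^M\alpha_m\chi_{\Omega_m}$ with $\|f-f_\epsilon\|_{L^2(\Omega)}<\epsilon/2$, where the $\Omega_m$ form a measurable partition of $\Omega$ and (by a further inner approximation using the finite-perimeter/finite-measure structure) one may assume the $\Omega_m$ are "nice" — e.g. finite unions of dyadic cubes, up to a set of arbitrarily small measure. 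So it suffices to build controls driving the flow $\phi_T$ to within $\epsilon/2$ in $L^2(\Omega)$ of such a cube-simple function.

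Next, the core geometric idea: cover $\Omega$ by a grid of small cubes $\{Q_j\}_{j=1}^{J}$ of side $\eta$, where $\eta$ is chosen so small that the total measure of the cubes meeting more than one $\Omega_m$ (the "boundary cubes") is $<\delta$, with $\delta$ fixed later in terms of $\epsilon$ and $\sup_m|\alpha_m|$ and $|\Omega|$. On the "interior cubes" — those contained in a single $\Omega_m$ — the target value $f_\epsilon$ is the constant $\alpha_m$. I would then apply the \emph{simultaneous control} strategy of Theorem \ref{TH2}, but with a finite ensemble of \emph{blocks} rather than points: using the place/freeze/translate/compress operations of Subsection \ref{funop}, construct piecewise-constant $A,W,b$ that move each interior cube $Q_j$ so that its image under $\phi_T$ lies inside a small ball around $\alpha_{m(j)}$, while freezing the cubes already processed. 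The key point, exactly as in Theorems \ref{TH1} and \ref{TH2}, is that ReLU lets us act on one half-space (containing the cube currently being relocated) while leaving the complementary half-space — containing all previously-placed cubes — invariant; separating hyperplanes parallel to the axes suffice because we first "prepare" the grid so that the cubes are separated in the $x^{(1)}$-coordinate. Compression can be used first to shrink each cube so its image has diameter $<\epsilon'$, guaranteeing that the contribution of interior cubes to $\|\phi_T-f_\epsilon\|_{L^2}^2$ is at most $(\epsilon')^2|\Omega|$.

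The boundary cubes are handled by a crude bound: the flow $\phi_T$ is globally Lipschitz in $x$ on any bounded set (the vector field is Lipschitz and we work on a compact time interval with bounded controls), so $\phi_T(\Omega)$ stays in a fixed bounded set $\tilde\Omega$; hence on the boundary cubes $|\phi_T(x)-f_\epsilon(x)|\le \mathrm{diam}(\tilde\Omega)+\sup_m|\alpha_m|=:C$, and their contribution to the squared $L^2$-error is at most $C^2\delta$. Choosing $\delta<\epsilon^2/(16C^2)$ and $\epsilon'$ small makes the interior+boundary error $<\epsilon/2$, which combined with the density step gives \eqref{aproxepsilon}. I expect the main obstacle to be \textbf{bookkeeping the freezing across many blocks simultaneously}: each relocation of a cube must not disturb the cubes already placed \emph{nor} spoil the $x^{(1)}$-separation of the cubes not yet placed, which forces a careful ordering (process cubes in order of their $x^{(1)}$-coordinate, push the active one "below" in the $x^{(2)}$-direction first as in step (b) of Theorem \ref{TH1}'s proof, and only then translate it into position). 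A secondary technical point is ensuring the compression step does not cause distinct cubes to overlap — this is why the $x^{(1)}$-preparation and the choice of sufficiently fine grid must precede the compression. None of these steps is deep; they are refinements of the already-established constructions, but the combinatorial care needed to make "simultaneous" work for an uncountable ensemble (packaged into finitely many blocks) is the substance of the argument, and it is also where the box-counting dimension of $\partial\Omega_m$ enters quantitatively when one tracks how $J$ and the number of switches grow as $\eta\to0$ (cf. Remark \ref{fractalremark}).
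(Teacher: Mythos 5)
Your high-level strategy is the same as the paper's: approximate $f$ by a simple function, mesh $\Omega$ into small cubes, discard a small-measure bad set containing the discontinuities, compress the good cubes and then apply the simultaneous control of Theorem \ref{TH2} to relocate each block to its target value, and control the error from the bad set crudely. That is precisely the architecture of the paper's Theorem \ref{THsimple}, whose Step 1 builds the mesh and discards $\Omega_h$, whose Step 2 is the Fundamental Compression Lemma, and whose Step 3 invokes Theorem \ref{TH2}. Your observation that the combinatorial ``bookkeeping of the freezing'' is the real substance, and your proposed ordering (push the active block down in $x^{(2)}$ first, then translate), also match the paper's mechanism.

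However, there is one step where your reasoning, as written, would actually fail. You bound the contribution of the boundary cubes by saying ``the flow $\phi_T$ is globally Lipschitz in $x$ on any bounded set ... so $\phi_T(\Omega)$ stays in a fixed bounded set $\tilde{\Omega}$'' and then set $C := \mathrm{diam}(\tilde\Omega)+\sup_m|\alpha_m|$ and choose $\delta < \epsilon^2/(16 C^2)$. The problem is circular: $\tilde\Omega$, and hence $C$, depends on the controls, and the controls you must construct to achieve the required precision on the good cubes \emph{blow up} as the grid is refined (in the paper's estimates $\|W\|_{L^\infty}\sim \epsilon^{-4d(d-1)}$). A Gr\"onwall/Lipschitz estimate then gives $\mathrm{diam}(\tilde\Omega) \sim e^{T\|W\|_{L^\infty}}$, which grows much faster than the measure $\delta$ of the boundary cubes shrinks, so the product $C^2\delta$ need not be small. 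The bound on $\phi_T(\Omega)$ independent of the grid refinement is therefore \emph{not} free; it is exactly what the Fundamental Compression Lemma \ref{FUNLEMMA} is built to deliver (the conclusion $\phi_1(\Omega;A,W,b)\subset\mathbb B(0,K)$ with $K$ independent of $N$, $N_\Gamma$, $\eta$), and the appendix proof is devoted to engineering the flow so that the bad set is \emph{carried along} in a controlled way rather than merely estimated a posteriori. Relatedly, you discard only the cubes meeting two different $\Omega_m$, but the paper additionally removes thin strips (width $\zeta=h^d$) around \emph{all} mesh hyperplanes and places them in $\Omega_h$; without a positive-width gap between neighbouring blocks, the separation/compression movements are bounded below by a speed that vanishes at the separating hyperplane, so they cannot be completed in finite time with controls of the stated order, and the quantitative bookkeeping above becomes even worse. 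Neither issue changes the overall plan, but both mean that the ``crude bound'' step is the hard part, not the easy one, and the Fundamental Compression Lemma is the missing ingredient that closes the gap.
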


\smallskip 
\noindent Theorem \ref{TH3} is a direct consequence of the following theorem:

\begin{theorem}[Approximation for simple functions]\label{THsimple}
 Let $d\geq 2$ and $T>0$.  Let $\Omega\subset\mathbb{R}^d$ be a bounded set, and $\sigma$ be the ReLU activation function in \eqref{relu}. Consider any  $f$ in the form of \eqref{simple} with characteristic sets $\Omega_m$ of finite perimeter.
 Then, for any $\epsilon>0$ there exist piecewise-constant controls $A,W\in L^\infty((0,T);\mathbb{R}^d)$ and $b\in L^\infty((0,T);\mathbb{R}^d)$ such that the flow generated by \eqref{CTNN}, $\phi_{T}(\cdot;A,W,b)$,  satisfies:
\begin{equation*}
\|\phi_{T}(\cdot;A,W,b)-f(\cdot)\|_{L^2(\Omega)}<\epsilon.
\end{equation*}


  Setting $T=1$, the norm the controls and their switches satisfy:
 \begin{equation}\label{FinalT}
 \|W\|_{L^\infty((0,1);\mathbb{R}^{d\times d})} \lesssim_{\Omega,f} \epsilon^{-4d(d-1)} \quad \text{ as }\epsilon \to 0
\end{equation}
\begin{equation}\label{Finalb}
 \|A\|_{L^\infty((0,1);\mathbb{R}^{d\times d})}=1,\quad \|b\|_{L^\infty((0,1);\mathbb{R}^{d})} \lesssim_{\Omega,f} \epsilon^{-2d(d-1)}\quad \text{ as }\epsilon \to 0
\end{equation}
   \begin{equation}\label{FinalS}
      \text{The number of switches of the controls $A,W,b$ is of the order of $\epsilon^{-2d(d-1)}$.}
     \end{equation}
 \end{theorem}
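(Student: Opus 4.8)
The plan is to approximate $f$ first by a "polyhedral" simple function adapted to a fine cubic grid, and then to realise that polyhedral function — up to an $L^2$ error and a mismatch on a set of small measure — by concatenating the four elementary operations of Subsection~\ref{funop} (place, freeze, compress/expand, translate).

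\emph{Step 1: reduction to a grid.} Fix a grid of $\mathbb{R}^d$ of mesh $h$. After refining we may assume the $\Omega_m$ pairwise disjoint. For each $m$ let $\widetilde\Omega_m$ be the union of the closed grid cubes $Q$ with $|Q\cap\Omega_m|\geq|Q|/2$. Since $\Omega_m$ has finite perimeter, $\sum_Q\min\{|Q\cap\Omega_m|,|Q\setminus\Omega_m|\}\lesssim \mathrm{Per}(\Omega_m)\,h$, whence $|\Omega_m\triangle\widetilde\Omega_m|\lesssim_{\Omega,f}h$; the grid cubes meeting $\Omega$ but not assigned to any $\widetilde\Omega_m$ (those straddling two of the $\Omega_m$, of total measure $\lesssim_{\Omega,f}h$) we simply declare to belong to some $\widetilde\Omega_m$ chosen arbitrarily, so that every grid cube meeting $\Omega$ carries one of the finitely many values $\alpha_m$. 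Setting $\widetilde f=\sum_m\alpha_m\chi_{\widetilde\Omega_m}$ one gets $\|f-\widetilde f\|_{L^2(\Omega)}^2\lesssim_{\Omega,f}h$, so it suffices to take $h\sim\epsilon^2$ and to build controls with $\|\phi_T(\cdot;A,W,b)-\widetilde f\|_{L^2(\Omega)}<\epsilon/2$; note that each $\widetilde\Omega_m$ is a finite union of grid cubes, whose total number inside $\Omega$ is $\sim h^{-d}$.

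\emph{Step 2: realising $\widetilde f$ by sorting cubes.} Because every elementary move acts rigidly on a whole half-space, one cannot address a single cube; instead we relocate the whole configuration so that each $\widetilde\Omega_m$ lands inside a prescribed axis-parallel box $R_m$, the boxes $R_1,\dots,R_M$ being pairwise disjoint and lined up along the first axis. This "sorting" is done by induction on a dimension parameter: to sort a configuration occupying a $d$-dimensional grid, use a hyperplane $\{x^{(1)}=c\}$ together with auxiliary translations to isolate one grid-slab $\{i_1=\mathrm{const}\}$ in an otherwise empty region of space (the complement being frozen), recurse on that slab — a $(d-1)$-dimensional grid sitting inside $\mathbb{R}^d$ — to ship its cubes into the appropriate boxes $R_m$, then restore the auxiliary translations exactly; iterate over the $\sim h^{-1}$ slabs, keeping every already-populated $R_m$ on the frozen side of all subsequent hyperplanes and routing cubes through a transverse direction. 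The base of the induction is a one-dimensional row of cubes, where the ODE-monotonicity obstruction of Remark~\ref{d=1} is circumvented using the transverse ambient directions available since $d\geq2$ (lift a cube off the row, translate it past its neighbours, bring it back down). The invariants to maintain are that every auxiliary translation is undone exactly, so the frozen part is genuinely restored, and that no box $R_m$ is ever on the active side once populated.

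\emph{Step 3: collapsing the boxes, and complexity.} Once $\widetilde\Omega_m\subset R_m$ with the $R_m$ disjoint and lined up, each $R_m$ is isolated by a pair of hyperplanes; for each $m$ we contract $R_m$ toward its centre along each coordinate direction down to diameter $<\eta$ ($d$ freeze-and-contract moves) and then translate the resulting tiny box onto $\alpha_m$ (one translation), which leaves the other boxes invariant. Taking $\eta$ so small that $\eta^2|\Omega|<\epsilon^2/8$ gives $\phi_T(\widetilde\Omega_m)\subset B(\alpha_m,\eta)$, hence $\|\phi_T-\widetilde f\|_{L^2(\Omega)}<\epsilon/2$ and $\|\phi_T-f\|_{L^2(\Omega)}<\epsilon$; a time rescaling as in Theorem~\ref{TH1} puts the construction on $[0,1]$. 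For the complexity: with $h\sim\epsilon^2$ the grid has $\sim\epsilon^{-2d}$ cubes, and the dimensional recursion of Step 2 multiplies the number of elementary moves by a factor polynomial in $h^{-1}$ at each dimension drop; the bookkeeping is arranged so that the total number of moves — equal to the number of switches — is $\sim h^{-d(d-1)}\sim\epsilon^{-2d(d-1)}$, which is \eqref{FinalS}. At every step $A$ is a $\pm1$ diagonal selector of a single coordinate, so $\|A\|_{L^\infty}=1$. The hyperplanes and auxiliary parking regions are pushed to distances $\lesssim$ (number of moves), giving $\|b\|_{L^\infty}\lesssim_{\Omega,f}\epsilon^{-2d(d-1)}$ as in \eqref{Finalb}; and since $T=1$ forces each of the $\sim\epsilon^{-2d(d-1)}$ moves into a time slot of length $\sim\epsilon^{2d(d-1)}$, while the longest translations (and the cumulative dilation needed to restore scales after all the contractions) are of size $\sim\epsilon^{-2d(d-1)}$, one gets $\|W\|_{L^\infty}\lesssim_{\Omega,f}\epsilon^{-2d(d-1)}/\epsilon^{2d(d-1)}=\epsilon^{-4d(d-1)}$, which is \eqref{FinalT}.

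\emph{Main obstacle.} The whole difficulty is concentrated in Step 2 and its cost accounting: since the admissible dynamics only ever moves or dilates a half-space as a rigid block, there is no "local" move, and one must organise the sorting of $\sim h^{-d}$ grid cubes into the target boxes purely through half-space translations and contractions, while simultaneously guaranteeing that (i) every auxiliary translation used to isolate a slab is exactly reversed, so the frozen complement is truly left invariant, (ii) already-sorted level sets are never again set in motion, and (iii) the contractions accumulated along the way do not shrink cubes below a resolvable scale. It is this bookkeeping — in particular reconciling it with the precise exponents $\epsilon^{-2d(d-1)}$ and $\epsilon^{-4d(d-1)}$ — that is delicate; everything else (Step 1, by the finite-perimeter estimate, and Step 3, by the disjointness of the $R_m$) is routine. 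Finally, weakening "finite perimeter" to "$\partial\Omega_m$ has box-counting dimension $\beta<d$" makes the straddling cubes of total measure $\lesssim h^{d-\beta}$, so the same scheme works with $h\sim\epsilon^{2/(d-\beta)}$, inflating the above costs accordingly — this is the content of Remark~\ref{fractalremark}.
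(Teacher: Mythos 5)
Your proposal takes a genuinely different route from the paper, but Step 2 --- which you yourself flag as ``the whole difficulty'' --- is left unresolved, and this is a genuine gap rather than deferred detail.

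The paper never sorts cubes by a dimensional recursion. It works with the \emph{non-uniform} grid generated by the hyperplanes through the boundary cover $\Gamma_h$ (so it tiles only the boundary, with $\sim h^{-(d-1)}$ cubes, and lets those hyperplanes induce $\sim h^{-d(d-1)}$ rectangles --- it never tiles all of $\Omega$, as your uniform grid does), removes a thin strip of width $\zeta = h^d$ around every hyperplane, and then runs the Fundamental Compression Lemma (Lemma~\ref{FUNLEMMA}, proved in Appendix~\ref{PROOF}). For each $k = 1, \dots, d-1$ that Lemma iterates a compress--separate--compress loop: contract the $x^{(k)}$-extent of everything to one side of a hyperplane, use the removed strip as a ramp to translate that compressed block in the $-x^{(d)}$ direction (the field has magnitude $\sim\zeta$ on the strip, so this separation costs time $\sim 1/\zeta$), then contract again toward the next hyperplane. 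After $d-1$ coordinates the whole configuration is flattened to a one-dimensional stack along $x^{(d)}$, which is then ordered and grouped by a refinement of Theorem~\ref{TH1}. Every move is an explicit half-space translation, contraction or expansion with an explicit time budget; the competition of budgets --- $\sim N$ iterations, each costing time $\sim 1/\eta + 1/\zeta + \log(1/(\eta\zeta))$ with $\|W\|_{L^\infty} = 1$ --- is what produces $\|W\|_{L^\infty} \sim N/\eta \sim \epsilon^{-4d(d-1)}$ after fixing $T=1$ and $\eta < 1/N$.

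Your Step 2 instead proposes to isolate a grid-slab, recurse on the lower-dimensional array of cubes inside it, and then ``restore.'' The invariants you list --- that auxiliary translations are exactly undone, that populated boxes stay frozen, that contractions do not degrade resolution --- are precisely the claims that need a proof, and your write-up explicitly defers them. The difficulty is real: with only half-space moves available, ``isolate one slab with the complement frozen'' already requires a pair of hyperplanes, and in between those moves the complement is not automatically frozen (only one half-space is frozen at a time); keeping the already-filled boxes $R_1,\dots,R_M$ on the frozen side of every hyperplane of every recursive call, while simultaneously parking active cubes in a transverse region and returning them, is exactly the kind of quantitative bookkeeping (conditions like $\delta < \zeta/(8N)$ and $\eta < 1/N$ in Appendix~\ref{PROOF}) that the paper carries out and you do not. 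The mechanism that lets the paper's construction close --- the removed strips acting as controlled ramps for the $x^{(d)}$-separation, with field strength tied to $\zeta$ --- has no analogue in your sketch; and your exponent $\sim h^{-d(d-1)}$ for the number of moves is asserted rather than derived from the recursion (a naive count of your scheme gives $\sim h^{-d}$, which for $d\geq 3$ is \emph{smaller}, so the match with \eqref{FinalS} is an artifact). As written this is a plan with the key lemma replaced by a conjecture, not a proof.
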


\begin{proof}

Let us first explain the main ideas of the proof. 

In the NODE formulation of this result,  each  input $x\in\mathbb{R}^d$ has a target $ f(x) \in\mathbb{R}^d$. The actual value of $f(x)$ depends on the characteristic set in which $x$ lies. Ideally, we would like to find controls $A,W$ and $b$ such that for every $x\in \Omega$ we have that
\begin{equation*}
 \begin{cases}
  \dot{x}=W(t)\boldsymbol{\sigma}(A(t)x(t)+b(t))\\
  x(0)=x\\
  \phi_T(x;A,W,b)= f(x).
 \end{cases}
\end{equation*}
But, due to the discontinuity of $f(\cdot)$ and the continuous dependence  of the  solutions of the ODE with respect to the initial data,  this task is impossible. Thus, we have to relax the problem so to formulate our goal in an approximate manner.


First, we will define for any $h>0$ small enough, a partition of $\Omega$ in a way that
$$\Omega=\Omega_c\sqcup \Omega_h,\qquad \mathrm{meas}(\Omega_h) \sim h$$
Then, the control problem we seek to solve is the following. For all $ \nu>0$, find $A,W$ and $b$  (that depend on $h$) satisfying that the associated controlled flow $\phi_T(\cdot,A,W,b)$ satisfies:
\begin{align*}
\begin{cases}
 |\phi_T(x;A,W,b)-f(x)|<\nu\quad &x\in\Omega_c\\
|\phi_T(x;A,W,b)|<K\quad &x\in\Omega_h.
\end{cases}
\end{align*}
with $K$ independent of $h$ and $\nu$.


This result can be achieved by means of a careful implementation and further development of the ideas of  the proof of the simultaneous controllability result  of Theorem \ref{TH2}. The proof is structured as follows.

In \ref{UAStep1}  we construct the partition  $(\Omega_h, \Omega_c)$. Furthermore, we will split $\Omega_c$ in $d$-dimensional hyperrectangles, each of them having a single target associated in $\mathbb{R}^d$.
\ref{UAStep2} concerns the compression of $\Omega_c$ and of its hyperrectangles plus an adaptation of Theorem \ref{TH1}.  The goal is to arrive to a configuration in which one will be able to apply the simultaneous controllability, Theorem \ref{TH2}. 
Finally, in \ref{UAStep3}, we apply the simultaneous controllability, Theorem \ref{TH2}.

\begin{description}[leftmargin=0cm]
\item[\namedlabel{UAStep1}{\textbf{Step 1}}]\textbf{Construction of $\Omega_h$.}

    \begin{enumerate}[leftmargin=0.5cm,font=\bfseries]
     \item[1.1] \textbf{Covering of  boundaries of the characteristic sets.} 
     Thanks to the compactness of the boundary
        $\Gamma=\bigcup_{m=1}^M \partial \Omega_m$
        it can be covered with a finite number of hypercubes of side $h$. Let us denote this cover by $\Gamma_h$. The cover $\Gamma_h$ can be chosen in a way that each edge of each hypercube is oriented in the direction of an element of the canonical basis. 
        
        Since the length of $\Gamma$  is finite, and its dimension is $d-1$, the number, $N_\Gamma$, of needed hypercubes of size $h$  in the cover $\Gamma_h$ is:
        $$N_\Gamma\lesssim_{\Gamma} h^{-(d-1)}\quad \text{ as }\quad h\to0. $$
        Since each hypercube has volume $h^d$, the volume of the cover $\Gamma_h$ is of the order of $h$. 
        Note that,  $\Gamma_h$ (represented by in white in Figure \ref{fig:UA2}) contains the jumps of the simple function $f$ to be approximated. 

        \begin{figure}[h!]
        \centering
        \begin{subfigure}[t]{0.4\textwidth}
         \includegraphics[scale=0.35]{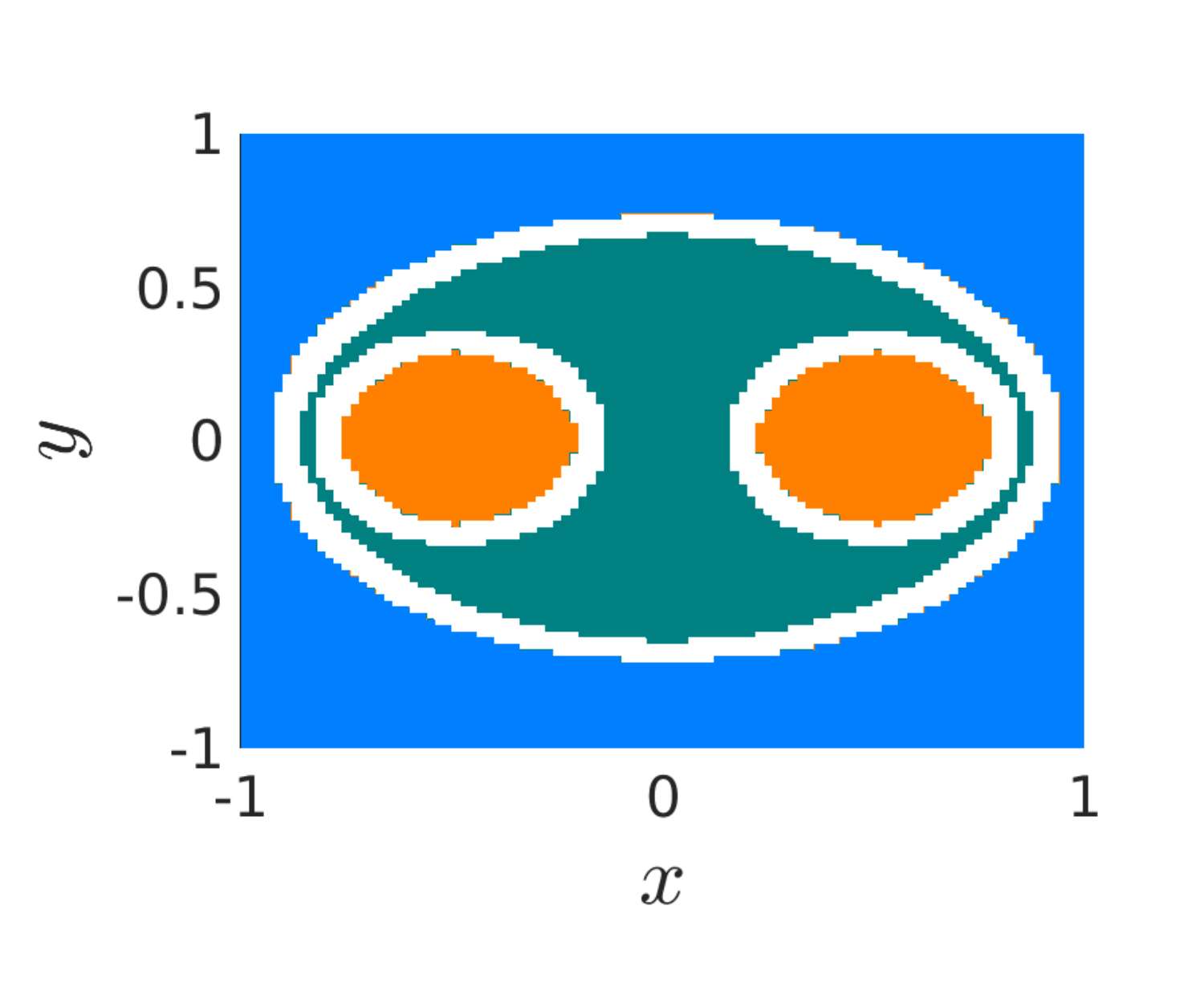}
\caption{}\label{fig:UA2} 
\end{subfigure}
                        \begin{subfigure}[t]{0.4\textwidth}
                        \centering
               \includegraphics[scale=0.6]{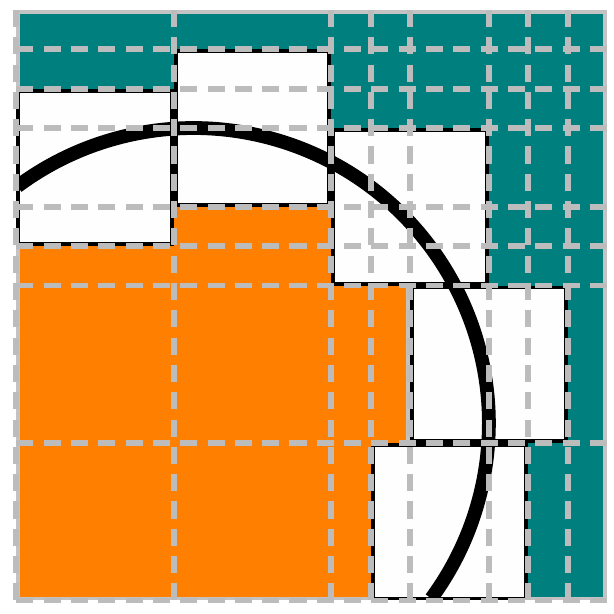}
\caption{}\label{fig:hyprec}                           
\end{subfigure}
\caption{(A) The resulting function after not considering the set $\Gamma_h$ showed in white. (B) Qualitative representation $d$-dimensional hyperrectangles' generation. The black curve represents part of the boundary of a characteristic set. The white hypercubes are part of the cover of such boundary and the grey dotted lines represent the hyperplanes \eqref{hyperplanes} generated out of the sides of the hypercubes of the cover. The colors orange and dark green represent the values of the target function in coherence with Figure \ref{fig:UA2}.}
        \end{figure}

\smallskip
     \item[1.2] \textbf{Determine the final $d$-dimensional hyperrectangles.}
     {\color{black}
        From the cover $\Gamma_h$ we want to find a partition of $\Omega\setminus\Gamma_h$ made out of $d$-dimensional hyperrectangles $\{\mathcal{H}_l\}_{l=1}^N$ satisfying the following:
        $$\forall x,y\in \mathcal{H}_l, \quad f(x)=f(y),$$
        i.e. each hyperrectangle has at most a single associated value $\alpha_m$.
        
        For finding such partition we proceed as follows. We consider all hyperplanes that define the hypercubes in $\Gamma_h$ (see Figure \ref{fig:hyprec} for a qualitative illustration). Altogether we have        
        \begin{equation}\label{hyperplanes}
         \left\{x^{(k)}=c_{k,n}\right\}\qquad 1\leq n\leq N_{\Gamma,k}\qquad 1\leq k\leq d, \qquad c_{k,n}<c_{k,n+1}
        \end{equation}
        where $N_{\Gamma,k}$ is the number of hyperplanes orthogonal to the $k$-th vector of the canonical basis and let $c_{k,N_{\Gamma,k}+1}=+\infty $ . By construction one has that $ N_{\Gamma,k}\leq 2N_{\Gamma}$ and therefore       
        \begin{equation}\label{hypdim}
        N_{\Gamma,k}\lesssim_{\Gamma} h^{-(d-1)}\quad \text{ as } h\to 0\qquad \text{for every }1\leq k\leq d.
        \end{equation}

         
        All these hyperplanes generate  $d$-dimensional hyperrectangles $\{\mathcal{H}_l\}_{l=1}^{N}$ with the property that every $\mathcal{H}_l$ has a associated target in $\alpha_{m(l)}\in\mathbb{R}^d$. Then, the number of final hyperrectangles $N$ can be estimated by:
        $$N\leq\prod_{k=1}^d N_{\Gamma,k}\leq 2^dN_\Gamma^d.$$
        Hence
        \begin{equation}\label{enH}
                   N\lesssim_{\Gamma} h^{-d(d-1)} \quad \text{ as } h\to 0.
        \end{equation}

     }

        \medskip
       
    \end{enumerate}

         We consider sufficiently thin $d$-dimensional strip around each hyperplane $\{x^{(k)}=c_{k,n}\}$ of side $\zeta=h^d$ given by
         \begin{equation}\label{varsigmastrip}
         \mathscr{S}_{n,k}:=\left(\mathbb{R}\times\mathbb{R}\times\cdots \times [c_{n,k}-\zeta,c_{n,k}+\zeta]\times\cdots \times\mathbb{R}\right)\cap \Omega.
         \end{equation}
         
         These strips, of measure of the order of $h^d$, belong to $\Omega_h$ together with $\Gamma_h$ (see Figure \ref{fig:removed}.).
         The total number of hyperplanes is of the order of $h^{-(d-1)}$, \eqref{hypdim}, and therefore also the number of the thin strips. Hence, one can see that the measure of $\Omega_h$ is of the order of $h$. 
         After removing the $d$-dimensional strips, we are left with at most $N$ connected sets.
         
         Then, making an abuse of notation, we denote the number by $N$ the number of connected components of $\Omega\setminus \cup_{k,n}\mathscr{S}_{n,k}$ and its connected components by  $\{\mathcal{H}_l\}_{l=1}^N$.

         Now the $L^2$ norm can be split in two:
        \begin{align*}
            \|f-\phi\|_{L^2(\Omega)}^2&=\sum_{i=1}^N \int_{\mathcal{H}_i} |f-\phi|^2dx+\int_{\Omega_h} |f-\phi|^2dx\\
            &\lesssim_{\Gamma} N\nu^2+h^{-(d-1)}h^d.
        \end{align*}
        We see that it suffices to pick:
        \begin{subequations}
         \begin{align}
        h&\lesssim_{\Gamma} \epsilon ^2\label{enh}\\
         \nu&\lesssim_{\Gamma}\epsilon N^{-\frac{1}{2}}\lesssim_{\Gamma} \epsilon h^{\frac{d(d-1)}{2}}\lesssim_{\Gamma}  \epsilon^{1+d(d-1)}\label{ennu}
         \end{align}
        \end{subequations}
        to ensure that
        $$\|f-\phi\|_{L^2(\Omega)}\lesssim_{\Gamma} \epsilon.$$

      \begin{figure}
   \includegraphics[scale=0.4]{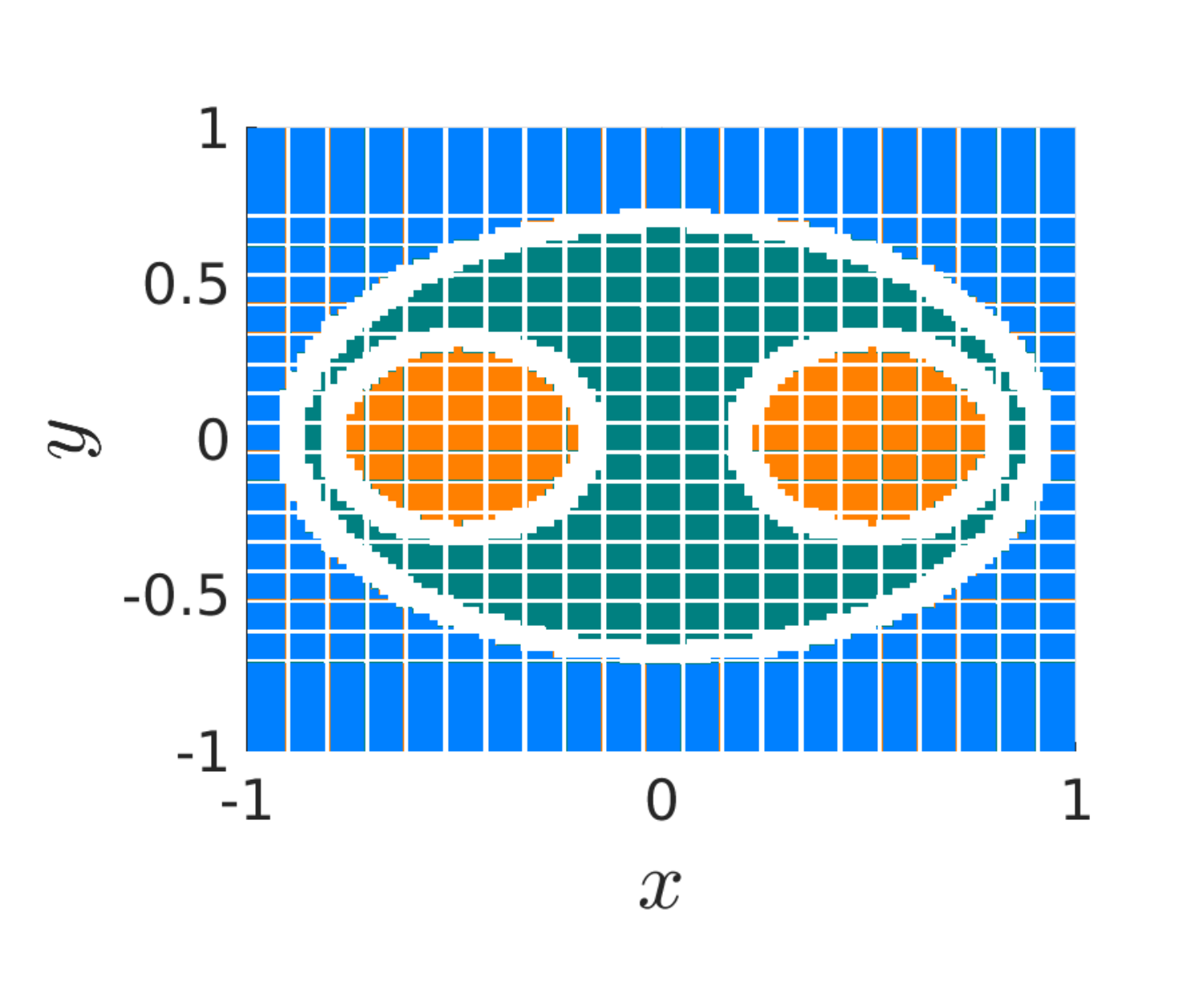}
   \caption{Qualitative representation of $f$ with removed strips around the meshing hyperplanes. The white regions correspond to $\Omega_h$.}\label{fig:removed}
    \end{figure}

    \medskip
    
 \item[\namedlabel{UAStep2}{\textbf{Step 2}}] \textbf{Compression of $\Omega_c$.} In this Step we apply a compression Lemma that allows us to apply the simultaneous controllability of Theorem \ref{TH2}.
 Given a set $\omega\subset\mathbb{R}^d$, let us define
$$\mathrm{diam}(\omega):=\max_{x_1,x_2\in\omega} |x_1-x_2|$$
$$\mathrm{diam}_{(k)}(\omega):=\max_{x_1,x_2\in\omega} |x_1^{(k)}-x_2^{(k)}|.$$
Furthermore, let us denote by $(\omega)^{(k)}$  the $k$-th component of the set $\omega$, i.e. $$\omega^{(k)}:=\{x\in\mathbb{R}: \exists y\in\omega\quad \text{ such that } y^{(k)}=x\}.$$

\textcolor{black}{As done in Theorem \ref{TH2}, in step 1, since the backward NODE is a solution for the forward NODE for the appropiate controls, we can find an equivalent set of targets $\{\alpha_m\}_{m=1}^M$ such that $\alpha_m^{(1)}\neq\alpha_{m'}^{(1)}$ if $m\neq m'$. Without loss of generality, we will assume that the set of targets fulfills such contidition.}

Let us summarize the properties we have built in the following
\begin{equation}\tag{\textbf{H}}\label{H}
 \begin{cases}
  \text{(a) Consider hyperplanes $\{x^{(k)}=c_{k,n}\}$ in the form of \eqref{hyperplanes}}.\\
  \text{(b) Let $\zeta>0$ be the width of the strips $\mathscr{S}_{n,k}$ in \eqref{varsigmastrip}}.\\
  \text{(c) Consider the sets {\small $\{\mathcal{H}_l\}_{l=1}^N$} defined by the connected components of {\small$\Omega\setminus\cup_{n,k}\mathscr{S}_{n,k}$}}.\\
  \text{(d) Assume that each set $\mathcal{H}_l$ has a single associated target $\alpha_{m(l)}$}.
  \end{cases}
\end{equation}

 \begin{lemma}[Fundamental Compression Lemma]\label{FUNLEMMA}
 
  Let $\Omega\subset\mathbb{R}^d$ be bounded, and consider hyperplanes, strips and sets as in \eqref{H}.  Assume that the set of targets $\{\alpha_m\}_{m=1}^M\subset \mathbb{R}^d$ fulfil $\alpha^{(1)}_{m}\neq\alpha^{(1)}_{m'}$ if $m\neq m'$.
   For any $T>0$ and any $0<\eta<\sfrac{1}{N}$ small enough, there exist piecewise-constant controls $A,W\in L^\infty((0,T);\mathbb{R}^{d\times d})$ and $b\in L^\infty((0,T);\mathbb{R}^{d})$ such that
  \begin{subequations}
   \begin{align*}
     &\phi_{1}(\mathcal{H}_l;A,W,b)^{(1)}\subset[\alpha_{m(l)}^{(1)}-C\eta,\alpha_{m(l)}^{(1)}+C\eta]\hspace{0.35cm}\qquad \text{for every }1\leq l\leq N,\\
     &\mathrm{diam}_{(k)}\phi_{1}(\Omega;A,W,b)<\eta\hspace{3.2cm}\qquad \text{for every }2\leq k\leq d,\\
     & \phi_{1}(\Omega;A,W,b)\subset \mathbb{B}(0,K)
   \end{align*}
  \end{subequations}
  with $K$ and $C$ independent of $N,N_\Gamma$ and $\eta$. Furthermore, fixing $T=1$, one has that:
    \begin{subequations}\label{eqsFunlema}
   \begin{align*}
     &\|A\|_{L^\infty((0,1);\mathbb{R}^{d\times d})}=1,\qquad\|b\|_{L^\infty((0,1);\mathbb{R}^{d})}\lesssim_{\Omega} N ,\\
     &\|W\|_{L^\infty((0,1);\mathbb{R}^{d\times d})}\lesssim_{\Omega,\alpha,d} N\left(\frac{1}{\eta}+\frac{1}{\zeta}+\log\left(\frac{1}{\eta\zeta}\right)\right).\\
     &\text{The number of discontinuities of the controls }A,W,b\text{ is of the order of }N.
   \end{align*}
  \end{subequations}
  Here, the subscript $\lesssim_\alpha$ denotes the dependence on the set $\{\alpha_m\}_{m=1}^M$.
 \end{lemma}

 For the sake of readability, the proof is postponed to Appendix \ref{PROOF}. The proof uses a concatenation of flows as the ones described in Subsection \ref{funop} to achieve the desired result.
 
 
  Note that, Lemma \ref{FUNLEMMA}, aside of compressing, for $\eta$ small enough, also contains a generalization of Theorem \ref{TH1} for the hyperrectangles.\footnote{observe that, now, the sets are classified by strips, this is analogous to the result of Theorem \ref{TH1}.}

    \medskip
 \item[\namedlabel{UAStep3}{\textbf{Step 3}}] \textbf{Simultaneous Control.}
        For $\eta$ small enough, one can find a collection of $M$ sets, $\{Q_m\}_{m=1}^M$ such that:
        $$\mathrm{diam}(Q_m)\leq \max\{1,2C\}\eta, \quad 1\leq m\leq M$$
        $$\phi_T(\mathcal{H}_l)\subset Q_m  \qquad \text{ if the target associated to $\mathcal{H}_l$ is $\alpha_m$}.$$
        
        Therefore we can apply Theorem \ref{TH2}. Let us denote the flow from Theorem \ref{TH2} by $\psi_T$. The controllability time of Theorem \ref{TH2}, $T_M$ does not depend on $h$  and by Grönwall inequality to obtain that 
        $$\nu:=\max_{1\leq m\leq M}\sup_{x\in \psi_{T_M}(Q_m)}|x-\alpha_m|\leq C_M\eta$$
        where $C_M$ is a constant that depends on $M$ and on the set $\{\alpha_m\}_{m=1}^M$ but not on $h$. Hence $\nu\sim _{M,\alpha}\eta$. Furthermore, the extra number of switches will depend on $M$ as in Theorem \ref{TH2} but not on $N$.
        {\color{black}
        
        We summarize the above discussion in a lemma that will be used in the next section.
        
        \begin{lemma}\label{controllingCharacteristics}
          Let $\Omega\subset\mathbb{R}^d$ be bounded, and consider hyperplanes, strips and sets as in \eqref{H}. Let $f$ be a simple function as in \eqref{simple}.
        For all $\nu >0$ small enough, there exist piecewise-constant controls $A,W\in L^\infty((0,T);\mathbb{R}^{d\times d})$ and $b\in L^\infty((0,T);\mathbb{R}^{d})$ such that the associated flow of \eqref{CTNN} satisfies
         \begin{align*}
\begin{cases}
 |\phi_T(x;A,W,b)-f(x)|<\nu\quad &x\in\Omega_c\\
|\phi_T(x;A,W,b)|<K\quad &x\in\Omega_h.
\end{cases}
\end{align*}
        Moreover, control norms are         with $K$ and $C$ independent of $N,N_\Gamma$ and $\eta$. Furthermore, fixing $T=1$,  one has that:
    \begin{subequations}
   \begin{align*}
     \|A\|_{L^\infty((0,1);\mathbb{R}^{d\times d})}&=1,\qquad\|b\|_{L^\infty((0,1);\mathbb{R}^{d})}\lesssim_{\Omega,\alpha,d} N,\\
     \|W\|_{L^\infty((0,1);\mathbb{R}^{d\times d})}&\lesssim_{\Omega,\alpha,d,M} N\left(\frac{1}{\nu}+\frac{1}{\zeta}+\log\left(\frac{1}{\eta\zeta}\right)\right).\\
     &\text{The number of discontinuities of the controls }A,W,b\text{ is of the order of }N.
   \end{align*}
  \end{subequations}
  
          \end{lemma}

        }
  \item[\namedlabel{UAStep4}{\textbf{Step 4}}] \textbf{Estimates.}       
        Recalling \eqref{enh}\eqref{ennu}\eqref{enH} plus the requirement that $\eta<N^{-1}$ applied to \eqref{eqsFunlema} we obtain that:
    \begin{subequations}
   \begin{align*}
     \|A\|_{L^\infty((0,1);\mathbb{R}^{d\times d})}=1,\qquad\|b\|_{L^\infty((0,1);\mathbb{R}^{d})}\lesssim_{\Omega,d} \epsilon^{-2d(d-1)}  ,\qquad\|W\|_{L^\infty((0,1);\mathbb{R}^{d\times d})}\lesssim _{\Omega,f,d}\epsilon^{-4d(d-1)}.\\
     \text{The number of switches of the controls }A,W,b\text{ is of the order of }\epsilon^{-2d(d-1)}.
   \end{align*}
  \end{subequations}

\end{description}

\end{proof}

{\color{black}

\begin{remark}[General simple functions]\label{fractalremark}
 In the previous theorem we have assumed that the perimeter of the characteristic sets is finite. However, the same proof works for certain sets with infinite perimeter.
 
 Let $D$ be the upper box-counting dimension of $\Gamma$ (see \cite[Chapter 2]{falconer2004fractal}, see also \cite[Chapter 1]{bishop2017fractals}). As in the proof above, let $N_\Gamma(h)$ be the number of hypercubes of side $h$ needed to cover the boundary $\Gamma$, the upper box-counting dimension is defined as 
 $$ D:=\limsup_{h\to0}\frac{\log N_{\Gamma}(h)}{\log\left(\frac{1}{h}\right)}.$$
 Hence, by definition one has that:
 $$N_\Gamma \lesssim h^{-D}\text{ as }h\to 0. $$
 Therefore, the estimates  \eqref{FinalT}\eqref{Finalb} and \eqref{FinalS} are replaced by
 \begin{equation*}
  \|W\|_{L^\infty}\lesssim \epsilon^{-\frac{4Dd}{d-D}},\qquad \|b\|_{L^\infty}\lesssim \epsilon^{-\frac{-2dD}{d-D}} \qquad\text{ as }\epsilon\to 0
 \end{equation*}
  \begin{equation*}
   \text{The number of switches of $A,W,b$ will be of the order of $\epsilon^{-\frac{2dD}{d-D}} $.}
 \end{equation*}
 Note that the estimates blow up when $D=d$, in such case, we cannot provide an estimate. 
 Boundaries with dimension $D=d$ exist, for instance, the Mandelbrot set \cite{shishikura1998hausdorff}. Generally speaking, one can have a Jordan curve that has dimension equals to the ambient dimension \cite{osgood1903jordan}.
 
 Taking the box-counting dimension as a measure of complexity of the simple function we see that as the characteristic sets are more complex the cost of the control grows.

\end{remark}

 }

\section{Control and simultaneous control of Neural transport equations}\label{Transport}

In this section we use the  techniques of the proof of Theorem \ref{THsimple} to control a Neural transport equation (NTE) of the form
\begin{equation}\label{NTEq}
\begin{cases}
  \partial_t\rho+\mathrm{div}_x\big[\left(W(t)\boldsymbol{\sigma}(A(t)x+b(t))\right)\rho\big]=0\qquad (x,t)\in\mathbb{R}^d\times(0,T)\\
 \rho(0)=\rho^0\in C_c(\mathbb{R}^d,\mathbb{R}^+)
\end{cases}
\end{equation}
where $C_c(\mathbb{R}^d,\mathbb{R}^+)$ stands for the space of compactly supported continuous nonnegative functions. Our proof of the Universal Approximation Theorem can be interpreted in the context of the simultaneous control of these NTEs. This will also give rise to a transport formulation of the classification problem.

In this section the approximation will be measured in the sense of the  Wasserstein-1 distance:
\begin{deff}
 Let $\mu,\nu\in \mathcal{P}_c(\mathbb{R}^d)$ be probability measures. The Wasserstein-1 distance $\mathcal{W}_1(\mu,\nu)$ is defined by as:
  \begin{equation*}
   \mathcal{W}_1(\mu,\nu)=\sup_{Lip(g)\leq 1} \left\{\int_{\mathbb{R}^d} gd\mu-\int_{\mathbb{R}^d} gd\nu\right\}
  \end{equation*}
where $Lip(g)\leq 1$ stands for the class of Lipschitz functions with Lipschitz constant less or equal than $1$.
\end{deff}

 Wasserstein distances play a central role in the theory of  optimal transport (see  
 \cite[Chapter 5]{villani2008optimal} 
  for a general reference in this subject).

\subsection{Control of the Neural Transport equation.}
\textcolor{white}{.}\newline
Since the vector field $W\boldsymbol{\sigma}(Ax+b)$ of the  Neural Transport equation \eqref{NTEq} is Lipschitz, it
preserves the mass for each set $\mathcal{H}$ along the characteristic flow $\phi_T(\mathcal{H})$
 $$\int_{\phi_T(\mathcal{H})}\rho(T)dx=\int_\mathcal{H}\rho^0dx.$$
The control of the \eqref{NTEq} can be done by controlling its characteristics plus a compression argument as done in Lemma \ref{FUNLEMMA} in Theorem \ref{THsimple}.

We will consider target measures $\rho^*$ that are finite combinations of Dirac masses:
\begin{equation}\label{targetrho}
 \rho^*=\sum_{m=1}^M \beta_m\delta_{\alpha_m}
\end{equation}
with $\beta_m>0$ and $\alpha_m\in\mathbb{R}^d$.
 \begin{theorem}\label{TH5}
  Let $T>0$, $d\geq 2$, $\sigma$ be as \eqref{relu} and $\rho^*$ as in \eqref{targetrho} be a target positive measure satisfying:
  \begin{align*}
   \int_{\mathbb{R}^d} d\rho^*=\int_{\mathbb{R}^d}\rho^0dx=1.
  \end{align*}
  Then, for every $\epsilon>0$, there exist piecewise-constant control functions $W,A\in L^\infty((0,T);\mathbb{R}^{d\times d})$ and $b\in L^\infty((0,T);\mathbb{R}^d)$ such that the solution of \eqref{NTEq} satisfies:
  $$ \mathcal{W}_1(\rho(T),\rho^*)<\epsilon$$
  where $\mathcal{W}_1$ is the Wasserstein-1 distance. 
  Furthermore, one has
  $$\|W\|_{L^\infty((0,T),\mathbb{R}^{d\times d})}\lesssim_{\rho^0,\rho^*} \epsilon^{-1}\qquad \text{ as }\epsilon\to0.$$
  Moreover, $\|b\|_{L^\infty((0,T),\mathbb{R}^{d})}$ , $\|A\|_{L^\infty((0,T),\mathbb{R}^{d\times d})}$ and the number of discontinuities of $W,A$ and $b$ are bounded independently of $\epsilon$.
 \end{theorem}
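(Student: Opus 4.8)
The plan is to reduce the statement to the Fundamental Compression Lemma (Lemma~\ref{FUNLEMMA}) and the simultaneous control result (Theorem~\ref{TH2}), using, as recalled just before the statement, that along the Lipschitz characteristic flow $\phi_t$ of \eqref{CTNN} the solution of \eqref{NTEq} satisfies $\int_{\phi_T(\omega)}\rho(T)\,dx=\int_\omega\rho^0\,dx$ for every Borel set $\omega$. First I would fix a bounded box $\Omega\supset\mathrm{supp}(\rho^0)$ and slice it according to the target masses: the map $c\mapsto F(c):=\int_{\{x^{(1)}\le c\}}\rho^0\,dx$ is continuous, nondecreasing and has range $[0,1]$, so there exist $c_1<\dots<c_{M-1}$ with $F(c_m)=\beta_1+\dots+\beta_m$, and the slabs $\Omega_m:=\{c_{m-1}<x^{(1)}\le c_m\}\cap\Omega$ (with $c_0=-\infty$, $c_M=+\infty$) partition $\Omega$ with $\int_{\Omega_m}\rho^0\,dx=\beta_m$. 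After a harmless reduction as in Step~1 of Theorem~\ref{TH2} (flowing the target points backward), we may assume $\alpha_m^{(1)}\neq\alpha_{m'}^{(1)}$ for $m\neq m'$. Removing around each of the $M-1$ hyperplanes $\{x^{(1)}=c_m\}$ a strip of width $\zeta$ puts us in the configuration \eqref{H} with $N=M$ connected components $\mathcal{H}_1,\dots,\mathcal{H}_M$, each carrying a single target $\alpha_{m(l)}$; since $\rho^0$ is bounded, the $\rho^0$-mass trimmed off into these strips is $r:=\sum_m r_m\lesssim_{\rho^0} M\zeta$, where $r_m\geq0$ is the mass trimmed off $\Omega_m$.

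Next I would apply Lemma~\ref{FUNLEMMA} with parameter $\eta$: it produces piecewise-constant controls that, in normalized time, drive each $\mathcal{H}_l$ into a box $Q_{m(l)}$ with $\mathrm{diam}(Q_{m(l)})\le C\eta$, $Q_{m(l)}^{(1)}\subset[\alpha_{m(l)}^{(1)}-C\eta,\alpha_{m(l)}^{(1)}+C\eta]$ and $\phi_1(\Omega)\subset\mathbb{B}(0,K)$, with $C,K$ independent of $\eta$. For $\eta$ small the first coordinates $\alpha_m^{(1)}$ stay separated by gaps bounded below independently of $\epsilon$, so the boxes $Q_m$ are pairwise separated in the $x^{(1)}$-direction and Theorem~\ref{TH2} applies (exactly as in Step~3 of the proof of Theorem~\ref{THsimple}): concatenating, we drive each $Q_m$ into $\mathbb{B}(\alpha_m,C_M\eta)$ in a further time $T_M$ depending only on $M$ and $\{\alpha_m\}$, at the cost of $O(M)$ additional switches whose norms depend on $\{\alpha_m\}$ but not on $\epsilon$. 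A final time rescaling brings the whole concatenation onto $[0,T]$ and, the unrescaled time being $O(1)$, only multiplies $\|W\|_{L^\infty}$ by a bounded factor.

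By mass conservation along characteristics, $\rho(T)$ carries mass $\beta_m-r_m$ inside $\mathbb{B}(\alpha_m,C_M\eta)$ and the residual mass $r$ inside a set of diameter $\lesssim_{\rho^0,\rho^*}1$. Building the transport plan that first moves the mass near $\alpha_m$ onto $\alpha_m$ (cost $\le\beta_m C_M\eta$) and then uses the residual mass $r$ to fill the deficits $r_m$ at the points $\alpha_m$ (cost $\lesssim_{\rho^0,\rho^*} r$) gives
\[
\mathcal{W}_1(\rho(T),\rho^*)\;\lesssim_{\rho^0,\rho^*}\;\eta+\zeta .
\]
Choosing $\eta\sim\zeta\sim\epsilon$ yields the approximation bound, and then, since $N=M$ is independent of $\epsilon$, Lemma~\ref{FUNLEMMA} gives $\|A\|_{L^\infty}=1$, $\|b\|_{L^\infty}\lesssim_{\rho^0}M$ and a number of switches $\lesssim M$, all independent of $\epsilon$, while
\[
\|W\|_{L^\infty}\;\lesssim_{\rho^0,\rho^*}\;M\Big(\tfrac1\eta+\tfrac1\zeta+\log\tfrac1{\eta\zeta}\Big)\;\lesssim_{\rho^0,\rho^*}\;\epsilon^{-1},
\]
the extra Theorem~\ref{TH2} step contributing only $\epsilon$-independent quantities.

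\textbf{Main obstacle.} The delicate part is the Wasserstein bookkeeping: tracking the small amount of mass lost into the removed strips, showing it can only be moved a bounded distance and hence contributes $O(\zeta)$ to $\mathcal{W}_1$, and verifying that no control norm other than $\|W\|_{L^\infty}$ is forced to grow with $\epsilon$. A related point is to ensure that the separating gaps used in the Theorem~\ref{TH2} step stay bounded below uniformly in $\epsilon$, so that step injects no hidden $\epsilon^{-1}$ factor into $\|W\|_{L^\infty}$ beyond the one already produced by the compression lemma.
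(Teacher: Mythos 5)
Your proposal follows the same route as the paper: slice $\Omega$ by the one-dimensional CDF of $\rho^0$ into $M$ slabs of the prescribed masses $\beta_m$, remove width-$\zeta$ strips around the slicing hyperplanes, apply the Fundamental Compression Lemma (equivalently Lemma~\ref{controllingCharacteristics}) followed by Theorem~\ref{TH2} with $N=M$, and use mass conservation along characteristics plus boundedness of $\phi_T(\Omega)$ to control the Wasserstein distance. The only differences are cosmetic — you bound the trimmed mass coarsely by $\|\rho^0\|_\infty\,\zeta$ rather than the paper's finer estimate via the marginal density at the cut points $c_m$, and you phrase the Wasserstein bound through an explicit transport plan instead of the paper's intermediate measure $\rho^\epsilon$ and two triangle inequalities — and both give the same $\epsilon^{-1}$ rate for $\|W\|_{L^\infty}$ with $\|A\|$, $\|b\|$ and the switch count independent of $\epsilon$.
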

\begin{proof}
We proceed in several steps:

\begin{enumerate}[leftmargin=0.65cm,font=\bfseries]
 \item \textbf{Assigning targets to the initial datum.}
 Consider $F:\mathbb{R}\to\mathbb{R}$ be defined as:
 \begin{equation}\label{distrfun}
 F(s)=\int_{-\infty}^s\left(\int_{\mathbb{R}^{d-1}}\rho^0dx^{(2)}...dx^{(d)}\right)dx^{(1)}.
 \end{equation}
 Using the fact that $\rho^0\in C_c(\mathbb{R}^d,\mathbb{R}^+)$, there exist two  real numbers, name them $c_0$ and $c_M$, such that
 $$\forall s\leq c_0,\quad F(s)=0,\qquad \forall s\geq c_M,\qquad F(s)=1.$$
 Since $\rho^0\in C_c(\mathbb{R}^d,\mathbb{R}^+)$, $F$ is continuous and monotone increasing, i.e. we can find $M-1$ real numbers such that:
 $$F(c_m)-F(c_{m-1})=\beta_m\qquad 1\leq m\leq M.$$
 Hence, we will  divide $\rho^0$ with $M-1$ hyperplanes of the form:
 $$\left\{x^{(1)}=c_m\right\},\quad 1\leq m\leq M-1$$
 (see Figure \ref{ts2}).
   \begin{figure}
  \includegraphics[scale=0.4]{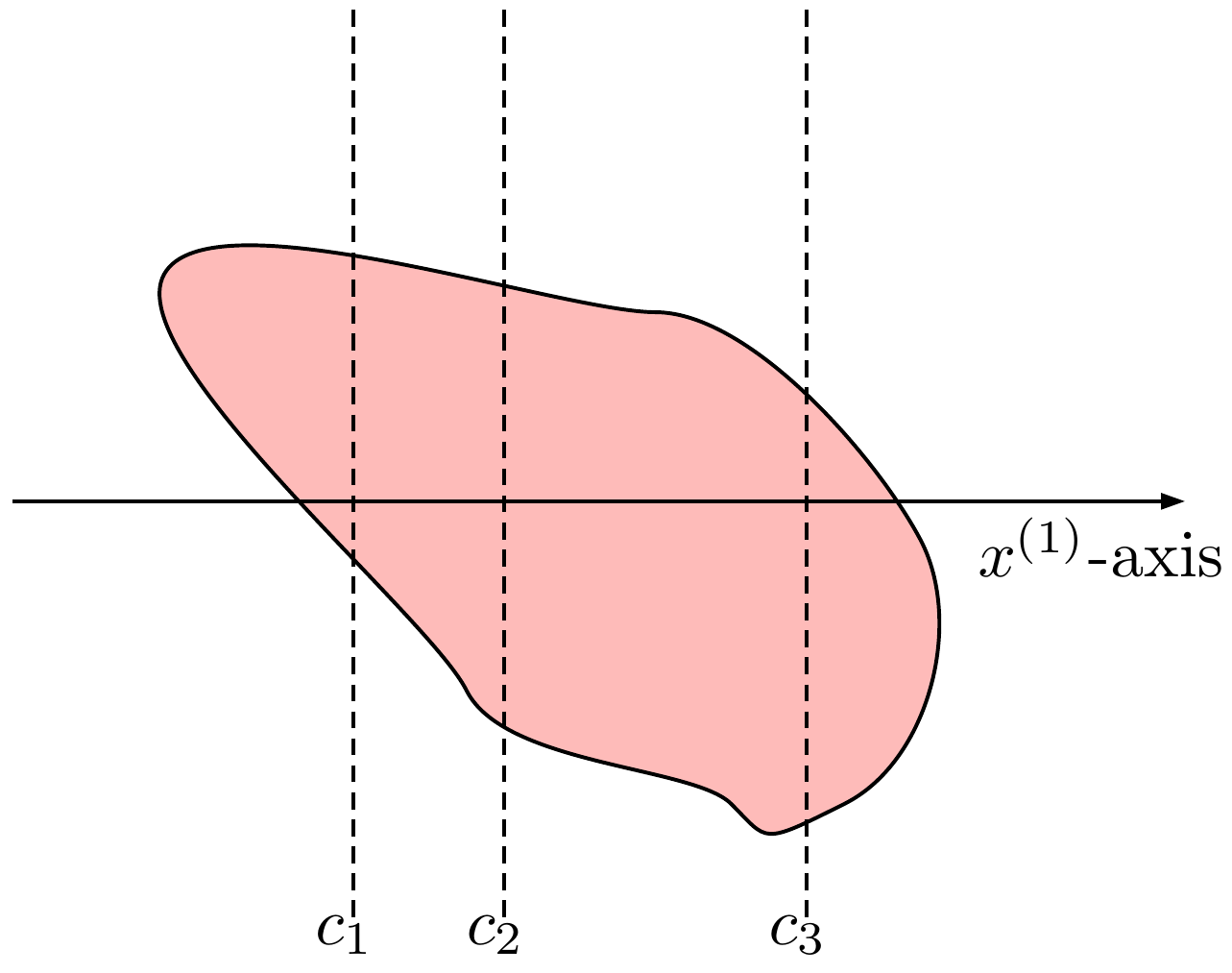}
  \caption{The support of the initial data $\rho^0$ and its division by hyperplanes.}\label{ts2}
 \end{figure}
  The intention is to find a flow such that, by means of the characteristics, can transport all the mass $\beta_m$ contained between the hyperplanes $\{x^{(1)}=c_m\}$ and $\{x^{(1)}=c_{m-1}\}$ to the target $\alpha_m$. As in the previous section, due to the continuity with respect of the initial data of the characteristics, this task will have to be carried out in an approximate manner.
  
 \item \textbf{Compression and simultaneous control.}
 Similarly as in \ref{UAStep1} of Theorem \ref{THsimple} ,  for every $\epsilon>0$, we will consider a $d$-dimensional strip around each hyperplane $\{x^{(1)}=c_m\}$ for $m=1,...,M-1$ of the form
 $$[c_m-\zeta,c_m+\zeta]\times\mathbb{R}^{d-1}$$
 for some $\zeta>0$ in a way that:
 $$ F(c_m+\zeta)-F(c_m-\zeta)\leq\frac{\epsilon}{M}\qquad 1\leq m\leq M-1.$$
 In this way, we ensure that the mass lying in the strips is of the order of $\epsilon$.

 Using Lemma \ref{controllingCharacteristics}, we can ensure that the mass lying in  the $d$-dimensional strips will lie in a bounded set independently of $\zeta$ and consequently independently of $\epsilon$.
 
 Let $\mathcal{H}\subset\mathbb{R}^d$ be a subset, and let $\rho(t)$ be the solution of \eqref{NTEq} for certain $A,W,b$, let $\phi_T$ be the solution of \eqref{CTNN} for the same controls. Due to the divergence structure of Equation \eqref{NTEq} and that the field $W\boldsymbol{\sigma}(Ax+b)$ is Lipschitz, one has that the mass is preserved along the characteristics and
 $$\int_{\phi_T(\mathcal{H})}\rho(T)dx=\int_\mathcal{H}\rho^0dx.$$

 {\color{black}
 
 Define $\mathcal{H}_m=\mathrm{supp}(\rho^0)\cap ([c_{m-1}+\zeta,c_m-\zeta]\times\mathbb{R}^{d-1})$. Now we may apply Lemma \ref{controllingCharacteristics} to obtain the approximate controllability of all characteristics in every $\mathcal{H}_m$ while having the mass allocated in the strips has been transported in a bounded set $\mathbb{B}(0,K)$ with $K$ independent of $\zeta$ and $\nu$. Moreover, choose $\nu$ small enough so that 
 $$\mathbb{B}(\alpha_m,\nu)\cap\mathbb{B}(\alpha_{m'},\nu)=\varnothing\qquad \text{if }m\neq m'.$$



Now, the objective is to quantify, in terms of the Wasserstein distance, the proximity of $\rho(T)$ to $\rho^*$.  The conclusion will follow from a straightforward computation using the triangular inequality with a suitable $\rho^\epsilon$. We will find a $\rho^\epsilon$ of the form
$$\rho^\epsilon=\sum_{m=1}^M\overline{\beta}_m\delta_{\alpha_m}+\sum_{m=1}^M\underline{\beta}_m\delta_{z_m}.$$
Let us proceed with the construction of $\rho^\epsilon$.
\begin{enumerate}[leftmargin=0.6cm,font=\bfseries]
\item \textbf{Choice of the masses $\boldsymbol{\{(\overline{\beta}_m,\underline{\beta}_m)\}_{m=1}^M}$.} Consider $\xi_m$ to be the minimizer of the following quantity
\begin{equation}\label{minimizer}
\xi_m:=\min_{0<\xi\leq \nu}\left|\int_{\mathbb{B}(\alpha_m,\xi)}\rho(T)dx-\beta_m\right|,\qquad 1\leq m\leq M.
\end{equation}
Observe that the function $g_m:\mathbb{R}^+\to[0,1]$ defined as
$$g_m(\xi)=\int_{\mathbb{B}(\alpha_m,\xi)}\rho(T)dx$$
is an increasing function. Therefore, if the minimizer \eqref{minimizer} is unique and if \eqref{minimizer} satisfies $\xi_m<\nu$, then we have that $g_m(\xi_m)=\beta_m.$
 We, then, define
 $$\overline{\beta}_m:=g(\xi_m)=\int_{\mathbb{B}(\alpha_m,\xi_m)}\rho(T)dx.$$
 By definition, $\overline{\beta}_m>0$. Moreover, taking into consideration that the mass transported, through the application of Lemma \ref{controllingCharacteristics}, into $\mathbb{B}(\alpha_m,\nu)$ is at least $\beta_m-\sfrac{2\epsilon}{M}$,  one has that
 $$\beta_m-\frac{2\epsilon}{M}\leq \overline{\beta}_m\leq \beta_m\qquad 1\leq m\leq M$$
 and
 $$1-2\epsilon\leq\sum_{m=1}^M\overline{\beta}_m\leq 1.$$
 Now define the remainders of mass $\underline{\beta}_m$ as follows
 $$ \underline{\beta}_m=\beta_m-\overline{\beta}_m,\qquad 1\leq m\leq M$$
 note that, by construction, one has $$0\leq\underline{\beta}_m\leq\frac{2\epsilon}{M},\quad 1\leq m \leq M.$$
\item \textbf{Choice of the locations $\boldsymbol{\{z_m\}_{m=1}^M}$.} Let $\mathcal{B}=\bigcup_{m=1}^M\mathbb{B}(\alpha_m,\xi_m)$ and define $G:\mathbb{R}\to \mathbb{R}$ as
  $$G(s)=\int_{-\infty}^s\left(\int_{\mathbb{R}^{d-1}}(1-\chi_{\mathcal{B}})\rho(T)dx^{(2)}...dx^{(d)}\right) dx^{(1)}.$$
  As in step 1 of this proof, there exist real numbers $c_0'$ and $c_M'$ such that:
  $$\forall s\leq c_0',\quad G(s)=0,\qquad \forall s\geq c_M',\qquad G(s)=1$$
  and as before, we find $M-1$ real numbers such that  
  $$ G(c_{m}')-G(c_{m-1}')=\underline{\beta}_m,\qquad\mathcal{T}_m= [c_{m-1}',c_{m}']\times \mathbb{R}^{d-1}.$$
  Take a collection of $\{z_m\}_{m=1}^M$ such that for every $m\in\{1,...,M\}$, satisfies $z_m\in \mathcal{T}_m \cap \mathbb{B}(0,K).$
\end{enumerate}
  }
{\color{black}
  \noindent Then, choosing $\nu=\epsilon$ the result follows by the triangular inequality.
  \begin{align*}
   \mathcal{W}_1(\rho(T),\rho^\epsilon)=&\sup_{Lip(g)\leq 1}\left|\int_{\mathbb{R}^d}g(x)\rho(T)dx-\sum_{m=1}^M\overline{\beta}_mg(\alpha_m)-\sum_{m=1}^M\underline{\beta}_mg(z_m)\right|\\
   \leq&\sup_{Lip(g)\leq 1}\left|\sum_{m=1}^M\int_{\mathbb{B}(\alpha_m,\xi_m)}(g(x)-g(\alpha_m))\rho(T)dx\right|+\sup_{Lip(g)\leq 1}\left|\sum_{m=1}^M\int_{\mathcal{T}_m}(g(x)-g(z_m))\rho(T)dx\right|\\
   \leq&2\nu+2K\epsilon\\
   \mathcal{W}_1(\rho^*,\rho^\epsilon)=&\sup_{Lip(g)\leq 1}\left|\sum_{m=1}^M\overline{\beta}_mg(\alpha_m)+\sum_{m=1}^M\underline{\beta}_mg(\alpha_m)-\sum_{m=1}^M\overline{\beta}_mg(\alpha_m)-\sum_{m=1}^M\underline{\beta}_mg(z_m)\right|\\
   =&\sup_{Lip(g)\leq 1}\left|\sum_{m=1}^M\underline{\beta}_mg(\alpha_m)-\underline{\beta}_mg(z_m)\right|\leq 4K\epsilon.
  \end{align*}
  
  \item \textbf{Estimates.}
  The control cost, also depends on the initial density function $\rho^0$. Define the map $\epsilon_c:\mathbb{R}^+\to\mathbb{R}^+$ as
  $$\epsilon_c(\zeta)=\int_{c-\zeta}^{c+\zeta}\left(\int_{\mathbb{R}^{d-1}}\rho^0dx^{(2)}...dx^{(d)}\right)dx^{(1)}.$$
  Note that $\epsilon_c(0)=0$, differentiating with respect to $\zeta$ and evaluating at $0$ we obtain that:
  $$\frac{d}{d\zeta}\epsilon_c(0)=2\int_{\mathbb{R}^{d-1}}\rho^0(c,x^{(2)},...,x^{(d)})dx^{(2)}...dx^{(d)}.$$
  Formally, the derivative of inverse map of $\epsilon_c$, which we denote by $\zeta_c:\mathbb{R}^+\to\mathbb{R}^+$, at zero fulfills:
  $$\frac{d}{d\epsilon}\zeta_c(0)=\left(2\int_{\mathbb{R}^{d-1}}\rho^0(c,x^{(2)},...,x^{(d)})dx^{(2)}...dx^{(d)}\right)^{-1}.$$
  Therefore, it suffices to require that
  \begin{equation}\label{constant}
   \zeta\leq\min_{m\in\{1,...,M-1\}}\left\{\left(2\int_{\mathbb{R}^{d-1}}\rho^0(c_m,x^{(2)},...,x^{(d)})dx^{(2)}...dx^{(d)}\right)^{-1},1\right\} \epsilon\qquad \text{as }\epsilon \to 0. 
  \end{equation}
  From Lemma \ref{controllingCharacteristics} we obtain that:
  $$\|W\|_{L^\infty}\lesssim_{\rho^*} \epsilon^{-1} \quad\text{ as }\epsilon\to 0.$$
  Furthermore,  $\|A\|_{L^\infty}=1, \|b\|_{L^\infty}\leq C $ and the number of discontinuities of the controls is bounded independently of $\epsilon$ but dependent on the target $\rho^*$.
  }
 \end{enumerate}
 \end{proof}
 \begin{remark}[More general target configuarations]
 We stated the theorem by setting targets that are Dirac masses, but, it is well known that, in particular, one can approximate any compactly supported probability measure by a finite number of Dirac masses.
 \end{remark}

 {\color{black}

 \begin{remark}[Influence of the initial condition on the control cost]
  
  The proof is valid for more general settings, namely for $\rho^0\in L^1_c(\mathbb{R}^d;\mathbb{R}^+)$, where by $ L^1_c(\mathbb{R}^d;\mathbb{R}^+)$ we understand the $L^1$ compactly supported nonnegative functions. However, the quantification of the control cost depends on the singularities of $\rho^0$. 
  
  In \eqref{constant} one can observe that as the density at a point $c_m$ increases, $\zeta$ should be smaller. If $\rho^0\in L^1_c(\mathbb{R}^d)$ the cost will depend on the ``\texttt{strength}'' of the singularities of $\rho^0$. The precise quantification of the control cost depending on the regularity of $F$ (or singularity of $\rho^0$) will not be treated in this article. For more details and a fine analysis of singularities we refer to \cite[Chapter I and II]{jaffard1996wavelet}.
 \end{remark}
  \begin{remark}
  Due to the uniqueness of the solution of the underlying characteristic system, we cannot consider, in general, initial data that are Dirac deltas. More precisely, by the uniqueness of the characteristics, it is impossible to bring the initial datum $\rho^0=\delta_0$ to any configuration of the type $\rho^*=\lambda \delta{\alpha_1}+(1-\lambda)\delta{\alpha_2}$ for any $\lambda\in(0,1)$ and any $\alpha_1,\alpha_2\in \mathbb{R}^d$.
 \end{remark}
 
 \begin{remark}
 In Theorems \ref{THsimple} and \ref{TH5}, to object of study was the  input output map given by the flow $\phi_T$ associated to \eqref{CTNN}.  In Theorem \ref{TH5} we use the same principles for another objective, to control in approximate manner from a given density function to another. 
 
 In Theorem \ref{THsimple} and Remark \ref{fractalremark} we observed how the geometry of the supports plays a crucial role in the control cost. However, in Theorem \ref{TH5}, since we are controlling a single probability density, we do not observe such dependence. In contrast, in \eqref{constant} we observe how high concentrations of mass can increase the cost of control.

 \end{remark}

 }

 \begin{remark}
  The restriction of $d\geq 2$ comes from the limitation pointed out in Remark \ref{d=1}. However, the transport equation 
  $$\partial_t \rho+\mathrm{div}_x[V(x,t)\rho]=0 $$
  with $V$ as a control, can be controlled in the one-dimensional case approximately with respect to the Wasserstein-1 distance. It would be enough to design appropriate locations of attractors and repulsors depending on the mass distribution of $\rho^0$. This would allow concentrating the mass in a finite number of points. Later, one needs to design a dynamical system that brings each mass approximately on the approximation of the target.
  Neural Transport Equations can achieve this in dimension $d\geq 2$ just with the controls $A,W$ and $b$.
 \end{remark}
 
 \begin{remark}
  If the target Dirac masses have the same mass, one does not need the simultaneous controllability. Given $M$ initial data $\{x_m\}_{m=1}^M$ and $M$ targets $\{\alpha_m\}_{m=1}^M$, it is enough that for every $\epsilon>0$ one can find controls $A,W$ and $b$ such that
  $$\forall j\in\{1,...,M\}\quad \exists!m:\quad \phi_T(x_m;A,b,W)\in \mathbb{B}(\alpha_j,\epsilon).$$
  Note that, one can also approximate any continuous compactly supported function by a finite number of Dirac masses whose support is disjoint and all of them have the same mass. 
 \end{remark}
 
 \begin{remark}[Other Wasserstein metrics]\label{villani}
 The definition we gave of the Wasserstein-1 metric relies on the Kantorovich-Rubinstein Theorem \cite[Theorem 1.14]{villani2003topics}. In the Euclidean space, a Wasserstein-p distance is defined as:
 \begin{equation*}
  \mathcal{W}_p(\mu_1,\mu_2)=\inf_{\gamma\in\Gamma(\mu_1,\mu_2)}\left\{\int_{\mathbb{R}^d\times\mathbb{R}^d}|x-y|^pd\gamma(x,y)\right\}
 \end{equation*}
 where $\Gamma(\mu_1,\mu_2)$ denotes the set of all measures $\gamma$ that satisfy $\mu_1=\int_{\mathbb{R}^d}\gamma(x,y) dx$ and $\mu_2=\int_{\mathbb{R}^d}\gamma(x,y) dy$. Theorem \ref{TH5} states the result in the Wasserstein-1 metric, however, since the supports of the final datum $\rho(T)$ and the target are compact, one has equivalence of all $\mathcal{W}_p$ distances \cite[Chapter 7, Section 7.1.2]{villani2003topics}.
 \end{remark}

 \begin{remark}\label{impUA}
  For the ReLU case, and for any Lipschitz nonlinearity, fixing controls $A,W$ and $b$, we see that the characteristics, at most, have an exponential decay. This is the impediment for Theorem \ref{TH3} to be a universal approximation theorem for functions in $L^2(\mathbb{R}^d,\mathbb{R}^d)$.
 \end{remark}

\subsection{Simultaneous control of Neural transport equations and classification}
 \textcolor{white}{.}\newline
 
 The scalar transport equation does not allow distinguishing among labels. One should take a vectorial structure in order to have a transport formulation for classification.
 
 Let us consider $M$ classes, and $M$ compactly supported probability densities $\rho_m$ for  $m=1,...,M$. Moreover, we assume, as in Section \ref{Smatset}, that for every $x$ there exists, at most, a unique label $y$. This implies that the supports of the probability measures $\rho_m$ are disjoint:
 $$ \mathrm{supp}(\rho_m)\cap \mathrm{supp}(\rho_{m'})=\varnothing, \qquad \text{if }m\neq m'.$$
 Theorem \ref{THsimple} is based on approximating a simple function. Therefore, in Theorem \ref{THsimple} we have already treated the problem of sending uniform disjoint compactly supported distributions to precisely located Dirac deltas (in an approximate manner). 
 Combining the ideas of Theorem \ref{THsimple} and \ref{TH5} one has the following:  
Let $M\in\mathbb{N}$ be a natural number and $\Omega\subset\mathbb{R}^d$ for $d\geq 2$ and consider the system:
\begin{equation}\label{contsys}
\begin{cases}
   \partial_t\rho_m+\mathrm{div}_x\left[\left(W(t)\boldsymbol{\sigma}(A(t)x+b(t))\rho_m\right)\right]=0,\quad &m\in\{1,...,M\}\\
   \rho_m(0)=\rho_m^0\in C^0_c(\mathbb{R}^d),\quad &m\in\{1,...,M\}.
\end{cases}
\end{equation}
\begin{theorem}\label{TH6}
 Let $T>0$ and $\rho_m^*$ be as in \eqref{targetrho} the target measure for the the $m$-th equation.
 Assume that the initial conditions satisfy:
 \begin{align*}
 \mathrm{supp}(\rho_m^0)\cap \mathrm{supp}(\rho_{m'}^0)=\varnothing\quad \text{if } m\neq m',\qquad\mathrm{Per}\left(\partial \mathrm{supp}(\rho^0_m)\right)<+\infty,\quad m\in\{1,...,M\}
   \end{align*}
 where by $\mathrm{Per}$ we understand the perimeter. In addition, assume that the target functions satisfy:
 \begin{align*}
   \mathrm{supp}(\rho_m^*)\cap \mathrm{supp}(\rho_{m'}^*)=\varnothing\quad \quad\text{if }m\neq m',\qquad\int \rho_m^*=\int \rho_m^0dx=1\quad\qquad m\in\{1,...,M\}.
 \end{align*}
 Then, for any $\epsilon>0$, there exist controls $W,A\in L^\infty((0,T),\mathbb{R}^{d\times d})$ and $b\in L^\infty((0,T),\mathbb{R}^{d})$ such that the solution of \eqref{contsys} satisfies:
 \begin{equation}
  \mathcal{W}_1(\rho_m(T),\rho_m^*)<\epsilon\quad  m\in\{1,...,M\}
 \end{equation}
 where $\mathcal{W}_1$ is the Wasserstein-1 distance. 

\end{theorem}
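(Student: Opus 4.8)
The plan is to run the proof of Theorem \ref{TH5} simultaneously on the $M$ densities, using the disjointness of the supports to guarantee that the hyperrectangle decomposition is compatible with the class structure; the whole argument is ultimately an application of Lemma \ref{FUNLEMMA} and Lemma \ref{controllingCharacteristics} together with the simultaneous control of Theorem \ref{TH2}, followed by a Wasserstein bookkeeping as in the last step of Theorem \ref{TH5}. As before, by the time-rescaling argument we may take $T=1$, and $d\ge 2$ is needed because of Remark \ref{d=1}.

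First I would decompose each initial density. Write $\rho_m^*=\sum_{j}\beta_{m,j}\delta_{\alpha_{m,j}}$. At the cost of at most $\epsilon$ in $\mathcal{W}_1$ we perturb the points $\alpha_{m,j}$ so that, over all $m$ and $j$, they are pairwise distinct with pairwise distinct first coordinates (if needed we additionally prepare them via the backward-NODE argument of Theorem \ref{TH5}, Step 1). For each $m$ we use the one-dimensional distribution function $F_m(s)=\int_{-\infty}^{s}\big(\int_{\mathbb{R}^{d-1}}\rho_m^0\,dx^{(2)}\cdots dx^{(d)}\big)\,dx^{(1)}$ to slice $\mathrm{supp}(\rho_m^0)$ by hyperplanes $\{x^{(1)}=c\}$ so that the $\rho_m^0$-mass between consecutive slicing hyperplanes equals the appropriate $\beta_{m,j}$. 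We then form $\Gamma_h$, a cover of $\bigcup_{m}\partial\,\mathrm{supp}(\rho_m^0)$ by $\mathcal{O}(h^{-(d-1)})$ hypercubes of side $h$ (possible since each boundary has finite perimeter, as in Theorem \ref{THsimple}, Step 1.1), adjoin all the slicing hyperplanes, and remove thin strips $\mathscr{S}_{n,k}$ of width $\zeta=h^d$ about every hyperplane. This leaves a finite family $\{\mathcal{H}_l\}_{l=1}^N$ of hyperrectangles, with $N=\mathcal{O}(h^{-d(d-1)})$ since $M$ is fixed, each contained in a single $\mathrm{supp}(\rho_m^0)$ and carrying a single target $\alpha_{m(l),j(l)}$, together with a remainder $\Omega_h$ of measure $\lesssim h$, so that $\int_{\Omega_h}\rho_m^0\,dx\le C h$ uniformly in $m$.

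Next I would apply Lemma \ref{controllingCharacteristics} (i.e. the Fundamental Compression Lemma \ref{FUNLEMMA} followed by Theorem \ref{TH2}) to $\{\mathcal{H}_l\}$ and the targets $\{\alpha_{m(l),j(l)}\}$: since the latter have distinct first coordinates, this yields piecewise-constant $A,W,b$ whose flow $\phi_T$ of \eqref{CTNN} sends each $\mathcal{H}_l$ into a ball of diameter $\mathcal{O}(\nu)$ about a point $z_l$ with the $z_l$ distinct, $|z_l-\alpha_{m(l),j(l)}|<\nu$, and $\phi_T(\Omega_h)\subset\mathbb{B}(0,K)$ with $K$ independent of $h,\nu$. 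Because $W\boldsymbol{\sigma}(Ax+b)$ is globally Lipschitz, \eqref{contsys} transports mass along characteristics, so $\int_{\phi_T(E)}\rho_m(T)\,dx=\int_E\rho_m^0\,dx$ for every Borel $E$; hence $\rho_m(T)$ places mass $\beta_{m,j}-\mathcal{O}(h)$ near each $\alpha_{m,j}$, with only $\mathcal{O}(h)$ total mass scattered inside $\mathbb{B}(0,K)$.

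Finally I would estimate $\mathcal{W}_1(\rho_m(T),\rho_m^*)$ for each $m$ exactly as in the closing step of Theorem \ref{TH5}: introduce $\rho_m^\epsilon=\sum_j\overline{\beta}_{m,j}\delta_{\alpha_{m,j}}+\sum_j\underline{\beta}_{m,j}\delta_{w_{m,j}}$ with $\overline{\beta}_{m,j}$ the $\rho_m(T)$-mass in an optimally chosen small ball about $\alpha_{m,j}$, $\underline{\beta}_{m,j}=\beta_{m,j}-\overline{\beta}_{m,j}=\mathcal{O}(h)$ and $w_{m,j}\in\mathbb{B}(0,K)$, and use the Kantorovich--Rubinstein formula and the triangle inequality to get $\mathcal{W}_1(\rho_m(T),\rho_m^\epsilon)\lesssim\nu+Kh$ and $\mathcal{W}_1(\rho_m^*,\rho_m^\epsilon)\lesssim Kh$; choosing $h$ and $\nu$ small enough (proportionally to the target accuracy, with constants depending on $K$, $M$ and $\{\alpha_{m,j}\}$) gives $\mathcal{W}_1(\rho_m(T),\rho_m^*)<\epsilon$ for all $m$ at once. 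The main obstacle is the bookkeeping in the first step: one must verify that a slicing hyperplane introduced for class $m$ does not destroy the class structure of the other clouds. This is exactly where disjointness of the supports enters, since such a hyperplane merely subdivides a hyperrectangle of another class into smaller hyperrectangles that still lie in a single support and inherit its single target, so the family $\{\mathcal{H}_l\}$ and the count $N$ stay controlled; once this is in place, everything downstream is a direct invocation of Lemmas \ref{FUNLEMMA}, \ref{controllingCharacteristics} and Theorem \ref{TH2}.
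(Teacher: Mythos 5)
Your proposal is correct and takes essentially the same route as the paper: the paper's own proof of Theorem \ref{TH6} is a two-line remark that one should combine the argument of Theorem \ref{TH5} with the hyperrectangle decomposition from Step 1 of Theorem \ref{THsimple}, which is precisely what you spell out (slicing each $\mathrm{supp}(\rho_m^0)$ by marginal-distribution hyperplanes, covering the boundaries, removing thin strips to get $\{\mathcal{H}_l\}$, invoking Lemmas \ref{FUNLEMMA} and \ref{controllingCharacteristics} plus Theorem \ref{TH2}, and closing with the same Wasserstein triangle-inequality bookkeeping as in Theorem \ref{TH5}). Your observation that disjointness of the supports makes the adjoined slicing hyperplanes compatible with the class structure is exactly the point the paper is gesturing at.
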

   The proof follows combining the arguments of Theorem \ref{THsimple} and \ref{TH5}. The only difference with respect to Theorem \ref{TH5} is that all initial data have to be separated by several extra hyperrectangles as done in \ref{UAStep1} in Theorem \ref{THsimple}. This step was not needed for controlling just one transport equation, but it had to be done in the universal approximation Theorem \ref{THsimple}. This implies that, in this case, the control cost will depend both, on the geometry of the supports of each density function, and on the concentration of mass of such density functions.

   The classification interpretation is the following. Let $\mathcal{S}=\{S_m\}_{m=1}^M$ be a partition of $\mathbb{R}^d$ and consider system \eqref{contsys}. The classification problem consists of: for every $\epsilon>0$, being able to find controls (that depend on $\epsilon$) $W,A\in L^\infty((0,T),\mathbb{R}^{d\times d})$ and $b\in L^\infty((0,T),\mathbb{R}^{d})$ such that the solution of \eqref{contsys} satisfies:
   $$ \int_{S_m}\rho_m(T)dx=1-\epsilon,\qquad \int_{\mathbb{R}^d}x\rho_m(T)dx\in S_m\qquad  m\in\{1,...,M\}$$
   Roughly speaking, the classification problem, in terms of transport equations, is equivalent to send most of the mass of each probability measure to prefixed sets and to control its expectation to the prefixed set.
   
   \begin{remark}
    Note that, since the supports are disjoint, if we consider $\rho=\sum_{m=1}^N \rho_m$, the controls $A,W$ and $b$ of Theorem \ref{TH6} control also the scalar equation \eqref{NTEq}.
   \end{remark}
   {\color{black}
   \begin{remark}[Robustness with respect to large number finite dimensional samples]
    The controls obtained from Theorem \ref{TH6} are robust with respect to large number finite dimensional samples that follow the probabilities distributions of the initial data of system \eqref{contsys}. This implies that, given an error $\epsilon$, the control cost of classification for finite dimensional samples as in Theorem \ref{TH1} does not blow up when the cardinality of sample tends to infinity. 
   \end{remark}
   
   \begin{remark}
    Note that Remark \ref{fractalremark} also applies in this case for having more irregular boundaries. However, if the boundaries of the supports of each distribution are at positive distance, one does not need to consider a fine meshing of the boundary. 
   \end{remark}

   }

\section{Conclusions and Perspectives}\label{Sconcl}

We now present a number of conclusions and open problems.

\begin{enumerate}[font=\bfseries]

 \item \textbf{The role of the activation function.}
 The key property that allows to prove all the results  in this paper is that the activation functions are able to leave one half-space invariant while moving the other half. As mentioned in the Introduction, the simultaneous controllability is a rare property for dynamical systems. In particular, it can never occur in linear systems when all trajectories solve the same system. Our methods  exploit this nonlinear feature of activation functions to obtain  simultaneous control properties of NODEs that lead to the desired results in the Machine Learning context.

 \medskip

  \item \textbf{Algorithmic complexity.} All proofs developed in this paper are algorithmic. 

 The proofs in Theorem \ref{TH1} and \ref{TH2} require a number of steps of the order of $\mathcal{O}(N)$, where $N$ is the number of points to be classified (see Remarks \ref{complexity1} and \ref{complexity2}). In the classification Theorem \ref{TH1}  each point to be classified is handled in an iterative manner. Thus, our estimate on the number of needed iterations  corresponds to the worst-case scenario. In practice, the optimal number of required switches will be lower, depending on the structure of the data to be classified (the initial data of the Neural ODE) and also on the partition chosen (or target in the case of Theorem \ref{TH2}). In particular,  when the initial data are structured into clusters our proof can take advantage of that to classify the data in fewer iterations, keeping them grouped in packages during the evolution.


 Note that, even in the context of Theorem \ref{TH1} and \ref{TH2}, where the number of switches is of order of $\mathcal{O}(N)$, the control cost depends on the initial data and target destination. In particular, the cost of control increases when the distance between data corresponding to different labels decreases or their mixing increases.


  The analysis is more intricate for the universal approximation theorem (Section \ref{SUA}) and for the control of  Neural transport equations (Section \ref{Transport}). Even if the proofs are also algorithmic, the $L^\infty$-norm of the control depends on the precision required when approximating the target as well as on the complexity of initial and final data, see Remark \ref{fractalremark}.
  \medskip



\medskip

     \item \textbf{Optimal control strategies.}
      As we have explained above, by time-scaling, all results are valid for an arbitrary $T>0$. For the analysis of the complexity of the control dynamics and the cost of control it is convenient to normalize the time to $T=1$. The complexity of the controls can be then estimated in terms of the number of  switches and their $TV$-norm. 
      
      
       The controls we build are not necessarily the optimal ones. In fact, as we have mentioned above, when the data to be classified present clustering phenomena, the complexity of the needed control diminishes.
      
     
      Optimal controls could be defined setting the time horizon to be $T=1$ and minimizing their $L^\infty$-norm. This would lead to an optimality system or Pontryagin maximum principle (\cite{pontryagin}) characterizing optimal controls that could be expected to present a bang-bang structure. The analysis of the complexity of optimal controls through such optimality system, which will strongly depend on the configuration  and structure of the data to be classified,   is an interesting and complex open problem. Of course such characterization does not lead to any explicit expression, it is of a purely implicit nature, making its posterior use rather complex (\cite{sussmann1979bang}). Note that this issue is even more complex in the context of the ReLU activation function, because of its lack of regularity.
     

    Other norms and cost criteria can also be used to define optimal controls. For instance, other than penalizing solely  the norm of the control, one could also penalize the controlled trajectories, enhancing the stability and the turnpike properties of the control processes, as shown in  \cite{esteve2020large} or in \cite{yague2021sparse} where the $L^1$ penalization of the state gives rise to sparse bang-bang controls as the ones shown in this article.
   
   
   In summary, the development of computational methodologies to derive control strategies of  minimal complexity is an interesting and challenging topic.
    \medskip

     \item \textbf{Optimal activation functions.}
 In the previous point, we were proposing to find optimal controls with prescribed initial and final data. One general open question would be the following. How to construct a vector field such that the norm of the control or the number of switches is minimal? or more precisely, what kind of vector fields can classify with minimal cost for any target function? 
  This question is analogous to the optimal observation and location of sensors in linear systems, addressed for instance in \cite{privat2015optimal,privat2016optimal}, but optimizing with respect to the nonlinearity.
 
  This question is not typically addressed in the control literature since, normally, in mechanics, we are not allowed to choose the dynamics that we want to control. 
 
 \medskip
 
 {\color{black}
  
  \item \textbf{Varying dimension.} The scaling of the control cost with respect to the precision $\epsilon$ and the number of switches in Theorem \ref{THsimple} depend on the dimension $d$.  The number of switches and the $L^\infty$ norm of the control suffer from the curse of dimensionality. However, the whole control process is made keeping $d$ invariant.
  
  Typically in ML, one can embed the system into a larger space, allowing more hyperplanes and benefiting from the extra directions. An open question would be: can we decrease the control cost by embedding the system into a larger dimension? How can one formulate a dimension varying Neural ODE? What is the impact on the control cost of this variability?
}
 \medskip
 
     \item \textbf{NODEs vs ResNets.} 
     The continuous setting of NODEs allows for a better understanding and mastery of the exponential growth and decay  of solutions with respect to time. These qualitative properties play an important role in the proofs of Theorems \ref{THsimple},\ref{TH5} and \ref{TH6}. On the other hand, the transport interpretation made in Section  \ref{Transport} allows for a more synthetic understanding of supervised learning. The time-continuous perspective can also be more adequate to analyze optimal control strategies.

     
     
 Once the NODE setting is well understood, a posteriori, the time-discrete setting allows achieving  similar results by means of fine discretizations of the Neural ODE, i.e., with sufficiently many deep layers. 


 On the other hand, as mentioned in Remark \ref{d=1}, when the time step is large enough, the discrete dynamics is richer in the sense that they are able to express features that the continuous NODEs cannot. 
 
     \medskip

 \item \textbf{On the topology approximation.} Our proof of universal approximation guarantees approximation in $L^2$. But it does not allow to obtain density results in spaces of higher regularity since it is fundamentally based on the approximation of simple functions. The approximation in $W^{k,p}$, if possible, would require more sophisticated techniques.   In the discrete setting, such results have been obtained  (\cite{daubechies2019nonlinear,burger2001error,guhring2020error}).
 \medskip
     
     \item \textbf{Other Neural ODEs.} In our poofs we have considered controls $W(t), A(t), b(t)$. But as we have seen, in each step of the control iteration only a few of the available control components  were activated. This suggests that similar results can be achieved with fewer controls.
For instance,
     \begin{itemize}
      \item $W=Id$. We could consider the model 
      $$\dot{x}=\boldsymbol{\sigma}(A(t)x+b(t))$$
      on which the vector field $\boldsymbol{\sigma}$ is always pointing towards the first quadrant, which makes it impossible to achieve purposes such as the Universal Approximation (Section \ref{SUA}) or the approximate control of NTEs (Section \ref{Transport}).
      \item We could also consider the variant
      $$\dot{x}=W(t)\boldsymbol{\sigma}(x)+b(t)$$
      Our results could be extended for this type of Neural ODEs,  using in an essential way the fact that  a quadrant of the phase space remains invariant under the dynamics. However, this structure of the NODE introduces further rigidity in the dynamics and the proofs require more complex arguments.
     \end{itemize}

     Another perspective is to generalize the results for second-order Neural ODEs such as continuous versions of Momentum Residual Neural Networks \cite{sander2021momentum}.
     \medskip

 \item \textbf{Optimal Transport.} Theorems \ref{TH5} and \ref{TH6} naturally establish a path towards optimal transport theory. 
 
 
  Optimal transport can be formulated in a dynamic framework. Namely, given two positive measures $\rho^0$ and $\rho^1$ with the same mass, the goal is to  find an optimal vector field $V$ and a solution $\rho$ of the continuity equation
 \begin{equation}\label{optrans}
 \begin{cases}
    \partial_t\rho+\mathrm{div}_x\left[V(x,t)\rho\right]=0\\
  \rho(t=0)=\rho^0\\
  \rho(t=T)=\rho^1
 \end{cases}
 \end{equation}
 so that $\rho$ transports $\rho^0$ into $\rho^1$ (\cite{benamou2000computational,chen2017matrix,benamou1998optimal,peyre2019computational}). This can be interpreted as an optimal controllability problem, the vector field $V$ being aimed to optimize the energy
 \begin{equation*}
  \mathcal{W}_2(\rho^0,\rho^1)=T\inf\int_0^T\int_{\mathbb{R}^d} \|V(x,t)\|^2d\rho(t) dt
 \end{equation*}{\color{black}
 subject to \eqref{optrans}. Here $\mathcal{W}_2$ stands for  the Wasserstein-2 distance. The control problem \eqref{optrans} is equivalent on finding geodesics in the measure space. }
 
 
 Theorems \ref{TH5} and \ref{TH6} guarantee that Neural transport equations \eqref{NTEq} and \eqref{contsys} allow to transport approximately $\rho^0$ into $\rho^1$. Analyzing how close are the optimal flows of Neural transport equations from the actual optimal solution of the dynamic optimal transport is an interesting and challenging problem. 
 \medskip

\end{enumerate}

\section*{Acknowledgments}
The authors acknowledge Borjan Geshkovski and Carlos Esteve Yagüe for their valuable comments.

\appendix

\section{Proof of the Fundamental Compression Lemma}\label{PROOF}
\setcounter{lemma}{0}

 \begin{lemma}[Fundamental Compression Lemma]
 
  Let $\Omega\subset\mathbb{R}^d$ be bounded, and consider hyperplanes, strips and sets as in \eqref{H}.  Assume that the set of targets $\{\alpha_m\}_{m=1}^M\subset \mathbb{R}^d$ fulfil $\alpha^{(1)}_{m}\neq\alpha^{(1)}_{m'}$ if $m\neq m'$.
   For any $T>0$ and any $0<\eta<\sfrac{1}{N}$ small enough, there exist piecewise-constant controls $A,W\in L^\infty((0,T);\mathbb{R}^{d\times d})$ and $b\in L^\infty((0,T);\mathbb{R}^{d})$ such that
  \begin{subequations}
   \begin{align*}
     &\phi_{1}(\mathcal{H}_l;A,W,b)^{(1)}\subset[\alpha_{m(l)}^{(1)}-C\eta,\alpha_{m(l)}^{(1)}+C\eta]\hspace{0.35cm}\qquad \text{for every }1\leq l\leq N,\\
     &\mathrm{diam}_{(k)}\phi_{1}(\Omega;A,W,b)<\eta\hspace{3.2cm}\qquad \text{for every }2\leq k\leq d,\\
     & \phi_{1}(\Omega;A,W,b)\subset \mathbb{B}(0,K)
   \end{align*}
  \end{subequations}
  with $K$ and $C$ independent of $N,N_\Gamma$ and $\eta$. Furthermore, fixing $T=1$, one has that:
    \begin{subequations}
   \begin{align*}
     &\|A\|_{L^\infty((0,1);\mathbb{R}^{d\times d})}=1,\qquad\|b\|_{L^\infty((0,1);\mathbb{R}^{d})}\lesssim_{\Omega} N ,\\
     &\|W\|_{L^\infty((0,1);\mathbb{R}^{d\times d})}\lesssim_{\Omega,\alpha,d} N\left(\frac{1}{\eta}+\frac{1}{\zeta}+\log\left(\frac{1}{\eta\zeta}\right)\right).\\
     &\text{The number of discontinuities of the controls }A,W,b\text{ is of the order of }N.
   \end{align*}
  \end{subequations}
  Here, the subscript $\lesssim_\alpha$ denotes the dependence on the set $\{\alpha_m\}_{m=1}^M$.
 \end{lemma}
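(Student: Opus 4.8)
The plan is to realise the map $\phi_1$ as a concatenation of finitely many elementary flows of the type described in Subsection~\ref{funop}: at each stage a hyperplane parallel to a coordinate axis is placed, one of the two half-spaces it determines is frozen, and the other one is either translated parallel to the hyperplane or contracted toward it. The crucial bookkeeping rule is that every separating hyperplane used during the construction is placed \emph{inside} one of the thin strips $\mathscr{S}_{n,k}$ of \eqref{varsigmastrip} (or outside $\Omega$), so that no hyperrectangle $\mathcal{H}_l$ is ever cut by an active hyperplane; consequently each $\mathcal{H}_l$ only undergoes rigid translations and contractions in the first coordinate, and its precise shape is irrelevant. The construction splits into two macro-phases: first a classification of the hyperrectangles in the first coordinate, which is an adaptation of the proof of Theorem~\ref{TH1} with the points replaced by the $\mathcal{H}_l$'s, and then a compression of the remaining coordinates.

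\textbf{Phase I (classification and compression in $x^{(1)}$).} Since the targets satisfy $\alpha_m^{(1)}\neq\alpha_{m'}^{(1)}$ for $m\neq m'$, for $\eta$ small enough the intervals $I_m:=[\alpha_m^{(1)}-C\eta,\alpha_m^{(1)}+C\eta]$ are pairwise disjoint, and it suffices to drive the $x^{(1)}$-projection of each $\mathcal{H}_l$ into $I_{m(l)}$. We process the $\mathcal{H}_l$'s one at a time, mimicking steps (a)--(c) of the proof of Theorem~\ref{TH1}: (a) using a hyperplane $\{x^{(1)}=c\}$ with $c$ in the strip $\mathscr{S}_{n,1}$ immediately to the right of $\mathcal{H}_l$, we lower in the $x^{(2)}$-direction the part of the current configuration with larger first coordinate; (b) symmetrically, with a hyperplane in the strip immediately to the left of $\mathcal{H}_l$, we lower the part with smaller first coordinate, so that after (a)--(b) the image of $\mathcal{H}_l$ is isolated in its own $x^{(2)}$-slab; (c) we separate that slab by a hyperplane $\{x^{(2)}=c'\}$, translate the image of $\mathcal{H}_l$ in the $x^{(1)}$-direction until it lies over $I_{m(l)}$, and contract it toward $\{x^{(1)}=\alpha_{m(l)}^{(1)}\}$ until its first-coordinate width is below $C\eta$. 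Because $\mathcal{H}_l$ is isolated throughout (c), no other hyperrectangle, in particular none already placed in some $I_m$, is affected. After $O(N)$ rounds all hyperrectangles have their first coordinate correctly placed; the auxiliary downward translations accumulate, so the $x^{(2)}$-extent of the configuration may have grown to size $O(N)$, but everything still lies in a bounded box and the first-coordinate extent is still $O(\mathrm{diam}(\Omega))+O(1)$.

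\textbf{Phase II (compression in $x^{(2)},\dots,x^{(d)}$).} For $k=2,\dots,d$ in turn we take a hyperplane $\{x^{(k)}=c\}$ with $c$ strictly below the current $k$-th range of the whole image, choose $W$ pointing toward it so that the resulting ReLU field contracts the (entire) active half-space toward that hyperplane, and run it long enough that $\mathrm{diam}_{(k)}\phi_t(\Omega)<\eta$. Such a field changes only the $k$-th coordinate and acts by exponential contraction, so the first coordinate arranged in Phase~I and the disjointness of the $I_m$ are untouched, and after all $d-1$ steps $\phi_1(\Omega;A,W,b)\subset\mathbb{B}(0,K)$ with $K\lesssim_\Omega 1$, independent of $N$, $N_\Gamma$ and $\eta$; the same is true for the constant $C$. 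This adds only $d-1$ switches.

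\textbf{Cost and main obstacle.} Normalising $T=1$, one allots a time slot of length $\sim 1/N$ to each of the $O(N)$ stages. The matrices $A$ are, stage by stage, $\pm$ a single diagonal entry, so $\|A\|_{L^\infty}=1$; the vectors $b$ encode the hyperplane offsets, which because of the accumulated downward translations reach magnitude $O(N)$, whence $\|b\|_{L^\infty}\lesssim_\Omega N$; and the number of switches is $O(N)$. The bound on $\|W\|_{L^\infty}$ collects three contributions, each multiplied by the factor $N$ coming from the $\sim 1/N$-long slots: pure translations covering distances $O(N)$ in Phase~I (still $\lesssim N\cdot N\le N/\eta$ since $\eta<1/N$) and $O(1)$ elsewhere; exponential contractions taking a hyperrectangle of width $O(1)$ down to $C\eta$ and a coordinate of extent $O(N)$ down to $\eta$, with rate $\sim N\log\tfrac1{\eta\zeta}$; and the manoeuvres performed at the scale $\zeta$ of the strips in order to separate adjacent hyperrectangles and reassemble them, which cost $\sim N/\zeta$. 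The genuinely delicate points, where the appendix spends most of its effort, are: (i) checking that every active hyperplane can be kept inside a strip $\mathscr{S}_{n,k}$ throughout all $O(N)$ stages, so that no $\mathcal{H}_l$ is ever split; and (ii) tracking the growth of the configuration so that the final compression leaves $K$ and $C$ independent of $N$, $N_\Gamma$, $\eta$ while the $L^\infty$-norm of $W$ is exactly of the stated order $N\big(\tfrac1\eta+\tfrac1\zeta+\log\tfrac1{\eta\zeta}\big)$.
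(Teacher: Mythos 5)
Your proposal differs from the paper's proof in a crucial respect: you attempt to classify the hyperrectangles by their first coordinate \emph{before} compressing and separating them, whereas the paper's proof (Appendix~\ref{PROOF}) first compresses all $\mathcal{H}_l$'s to diameter $\eta$ and \emph{separates them along $x^{(1)}$ by a uniform gap $1/N$}, and only then runs the Theorem~\ref{TH1}-style classification. This order of operations matters, and reversing it as you do breaks the isolation step.

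Specifically, in your Phase~I, steps (a)--(b) lower (in $x^{(2)}$) exactly the sets whose $x^{(1)}$-projection lies strictly to the right, respectively strictly to the left, of the strips flanking $\mathcal{H}_l$. What survives these two moves unchanged is not $\mathcal{H}_l$ alone, but the whole $x^{(1)}$-column of $\mathcal{H}_l$: every hyperrectangle $\mathcal{H}_{l'}$ that shares the same range of first coordinates as $\mathcal{H}_l$ but sits at a different $x^{(2)},\dots,x^{(d)}$ position. Since $N\lesssim h^{-d(d-1)}$ while the number of $x^{(1)}$-hyperplanes is only $N_{\Gamma,1}\lesssim h^{-(d-1)}$, generic columns contain many hyperrectangles, frequently with different labels $m(l')$. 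Your step~(c) then places a single hyperplane $\{x^{(2)}=c'\}$ and translates/contracts everything above it; this drags all of those $\mathcal{H}_{l'}$ into $I_{m(l)}$ as well, which is wrong whenever $m(l')\neq m(l)$, and in dimension $d\geq 3$ even a careful choice of $c'$ cannot single out $\mathcal{H}_l$, because further hyperrectangles may coincide with it in both $x^{(1)}$ and $x^{(2)}$ and differ only in $x^{(3)},\dots,x^{(d)}$. Once misplaced, these sets can no longer be cleanly re-isolated, since the only separating hyperplanes you allow yourself live in the original strips $\mathscr{S}_{n,k}$, which bear no relation to the targets $\alpha_m^{(1)}$.

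The paper's Step~1 is designed precisely to remove this obstruction: the compression sub-steps (1.1.1--1.1.3 applied in directions $1,\dots,d-1$, then 1.2.1--1.2.2 in direction $d$) flatten every $\mathcal{H}_l$ to diameter $O(\delta)$ and \emph{fan them out along the $x^{(1)}$-axis with pairwise spacing $\geq 1/N$}; only afterward does Step~2 (``Ordering and Grouping'') run the Theorem~\ref{TH1}-type moves, at which point every $\mathcal{H}_l$ really does occupy a distinct small $x^{(1)}$-interval and can be singled out by a pair of hyperplanes. Your Phase~II (coordinate-wise contraction) and the cost bookkeeping are sound in isolation, and your bound on $\|W\|_{L^\infty}$ matches the paper's, but they rest on a Phase~I that cannot, as written, isolate individual hyperrectangles. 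To repair this you would either have to insert the paper's full compression-and-separation stage before classification, or iterate the freeze-and-lower manoeuvre through all $d$ coordinates to carve out a single $\mathcal{H}_l$, carefully re-examining how the accumulated translations interact with the strips and with the already-placed sets.
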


   \begin{proof}
   In the whole proof we fix the $L^\infty$ norm of $W$ to be $1$, and we study the time needed for each step to be completed. By rescaling, this will give us the $L^\infty$ norm of $W$.
   
 \noindent\textbf{1. Compression of $\Omega_c$.}
 
  \noindent In this step we compress $\Omega_c$. Notice that in all this process we will be transforming also $\Omega$, being able to bound the transformation of $\Omega_h$ whose diameter will remain very close to the diameter of $\Omega_c$.
 We will compress the sets $\mathcal{H}_l$ so that:
 \begin{itemize}
  \item All hyperrectangles, after applying the controls, will have a diameter of the order of $\eta$ as small as we want.
  \item The compressed sets will be separated by a fix constant independent of $\eta$.
 \end{itemize}
  This will allow us to apply the simultaneous controllability after a refinement of Theorem \ref{TH1} made in the next step of the proof.
  
   There will be two phases, firstly we compress the $x^{(k)}$-coordinates for $k=1,...,d-1$. 
   As we will see later on, with this compression the sets $\mathcal{H}_l$ will have been transformed and moved in a particular configuration that will allow the the compression in the $x^{(d)}$-coordinate. Moreover, while compressing the the $x^{(d)}$-coordinate we will create a separation between the compressed sets.
  

{\color{black}For each $k$ from $1$ to $d-1$ apply successively the following steps:} 
  \medskip
    \begin{enumerate}[leftmargin=0.5cm]
     \item[1.1]\label{2.1} \textbf{Compression of the $x^{(k)}$-coordinates $1,...,d-1$.}
        
        Here below we describe a procedure that will be recursively applied using successively all hyperplanes $\{x^{(k)}=c_{k,n}\}$ starting from $n=1$ until $n=N_{\Gamma,k}$.
        \medskip
        \begin{enumerate}
            \item[1.1.1]\label{2.1.1} \textbf{First Compression.}
                We set the hyperplane $\{x^{(k)}=c_{k,n}\}$ and the controls
                $$ A_{ij}=-\delta_{ki}\delta_{kj},\quad b_{i}=c_{k,n}\delta_{k,i}.$$
                We choose $W$ so that the field points towards the hyperplane, generating a contraction.
                
                 We solve \eqref{CTNN} for $T$ large enough so that the hyperrectangles that have been affected by the nonzero vector field have a $\mathrm{diam}_{(k)}$ lower than $\delta$  (see Figure \ref{precond0}), i.e.
                 $$\mathrm{diam}_{(k)}\phi_T(\mathcal{H}_l)<\delta\quad \text{ if } \max_{x\in\mathcal{H}_l}x^{(k)}<c_{k,n}.$$
                 
                    By $\phi_T(\mathcal{H}_l)$ we understand the flow applied to the set $\mathcal{H}_l$ until time $T$ with the aforementioned controls.
     
                We make an abuse of notation, and we redefine $\mathcal{H}_l:=\phi_T(\mathcal{H}_l)$. This will be done after each step in the proof to avoid the long notation of applying sequentially different flows with different controls.
                
                The time needed for making this compression is of the order of:
                \begin{equation}\label{T.2.1.1}
                 T_{\text{step } 1.1.1}\sim_{\Omega} \log\left(\frac{1}{\delta}\right).
                \end{equation}

                \begin{figure}[h!]
                    \hspace{-0cm}\begin{subfigure}[b]{0.1\textwidth}
                         \includegraphics[scale=0.8]{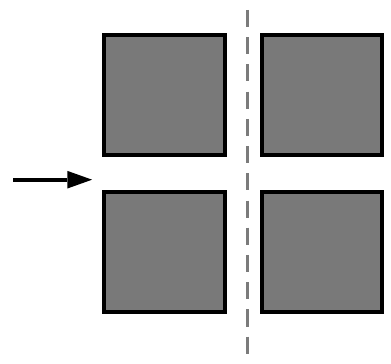}
                         \caption{}
                            \end{subfigure}
                            \hspace{6cm}\begin{subfigure}[b]{0.1\textwidth}
                            \includegraphics[scale=0.8]{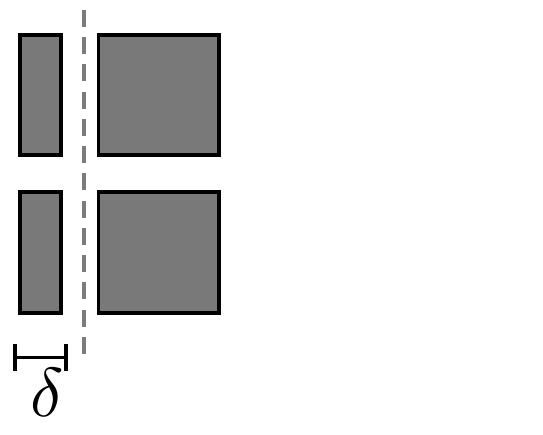}
                                           \caption{}
                            \end{subfigure}
                        \caption{Qualitative representation of Step 1.1.1 (A) Initial meshing of hypercubes where a vector field is applied to compress, (B) qualitative representation of the resulting transformation.}\label{precond0}
                \end{figure}

            \item[1.1.2] \textbf{Separation.}\label{2.1.2}
                We choose a parallel hyperplane to the one in the step above $\{x^{(k)}=c_{k,n}+(\sfrac{1}{4})\zeta)\}$ and the matrix
                $$ A_{ij}=-\delta_{ki}\delta_{kj},\quad b_{i}=\left(c_{k,n}+\frac{1}{4}\zeta\right)\delta_{k,i}.$$
                We choose $W$ so that the field points to $-\infty$ in the $x^{(d)}$-axis. We solve \eqref{CTNN} for $T$ large enough so that the sets that have been affected by the nonzero vectorfield, have lower values in the $x^{(d)}$-coordinate than the sets that have not been affected by the vectorfield (in the region in which the field has been $\boldsymbol{0}$) (see Figure \ref{precond1}), i.e.
                $$ \max_{x\in\phi_T(\mathcal{H}_i)} x^{(d)}<\min_{\xi\in \mathcal{H}_j} \xi^{(d)}\quad\text{ if }
                \max_{x\in\mathcal{H}_i} x^{(k)}<c_{k,n},\quad
                \max_{x\in\mathcal{H}_j} x^{(k)}>c_{k,n}.$$

                Given that the side of the removed strip is of the order of $\zeta$, the field will be bounded by below by a quantity of the order of $\zeta$. The  space needed for the translation is a constannt depending on $\Omega$. Therefore, the time needed for achieving the separation is of the order of
                \begin{equation}\label{T.2.1.2}
                    T_{\text{step } 1.1.2}\sim_{\Omega} \frac{1}{\zeta}.
                \end{equation}
                \begin{figure}[h!]
                    \hspace{-0cm}\begin{subfigure}[b]{0.1\textwidth}
                         \includegraphics[scale=0.8]{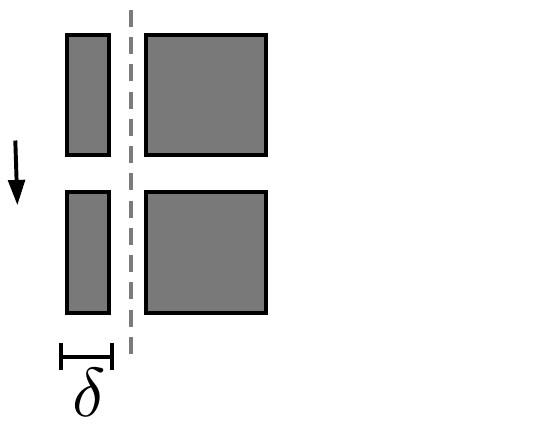}
                         \caption{}
                        \end{subfigure}
                        \hspace{6cm}\begin{subfigure}[b]{0.1\textwidth}
                        \includegraphics[scale=0.4]{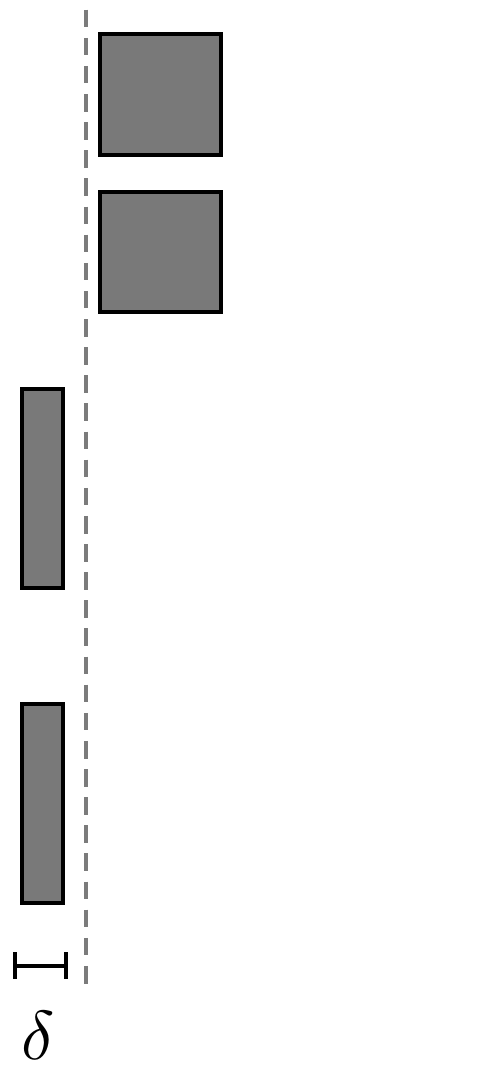}
                                           \caption{}
                               \end{subfigure}
                    \caption{Qualitative representation of step 1.1.2 (A) vector field  that is going to be applied to separate, (B) qualitative representation of the resulting transformation.}\label{precond1}
                \end{figure}
            \item[1.1.3] \textbf{Second Compression.}\label{2.1.3}
                We apply a compression in the $x^{(k)}$ coordinate, by setting a hyperplane $\{x^{(k)}=c_{k,n+1}\}$. 
                Setting
                $$ A=-\delta_{ki}\delta_{kj},\quad b_i=c_{k,n+1}\delta_{k,i}$$
                and $W$ so that the field points towards the hyperplane, making it attractive (see Figure \ref{precond2}). We solve \eqref{CTNN} for $T$ large enough so that:
                $$\sup_{x\in \phi_T(\mathcal{H}_l)} |x^{(k)}-c_{k,n+1}|<\delta,\quad \text{ if } \max_{x\in\mathcal{H}_l}x^{(k)}<c_{k,n+1}.$$
                The needed time is:     
                \begin{equation}\label{T.2.1.3}
                    T_{\text{step } 1.1.3}\sim_{\Omega} \log\left(\frac{1}{\delta}\right).
                \end{equation}
                
                \begin{figure}[h!]
                    \hspace{-0cm}\begin{subfigure}[b]{0.1\textwidth}
                    \includegraphics[scale=0.4]{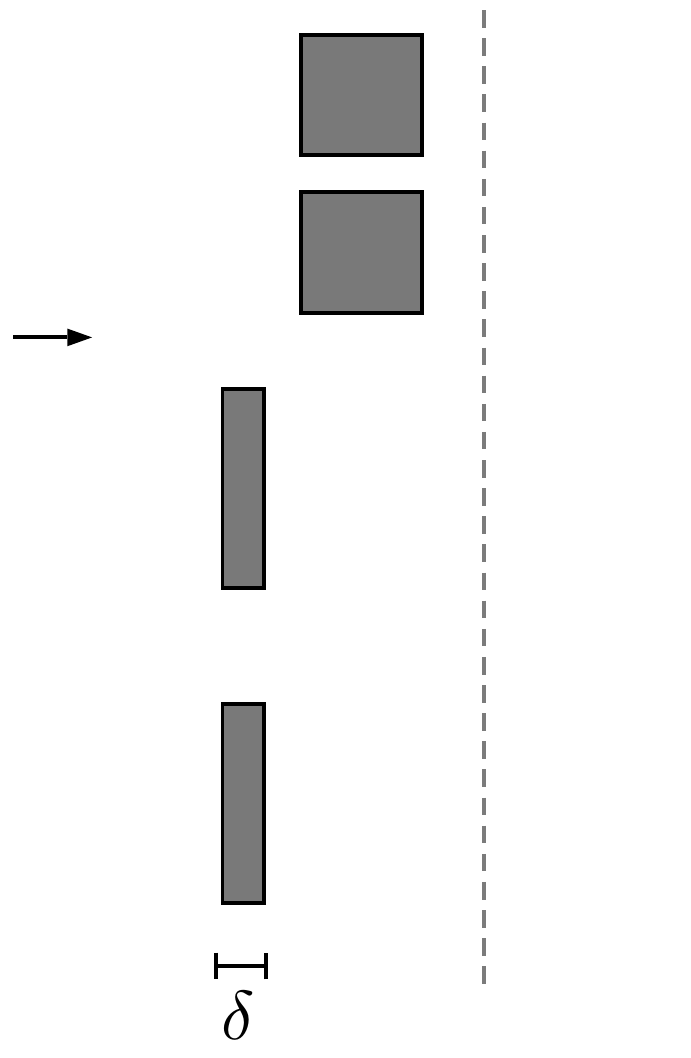}
                         \caption{}
                    \end{subfigure}
                    \hspace{6cm}\begin{subfigure}[b]{0.1\textwidth}
                    \includegraphics[scale=0.4]{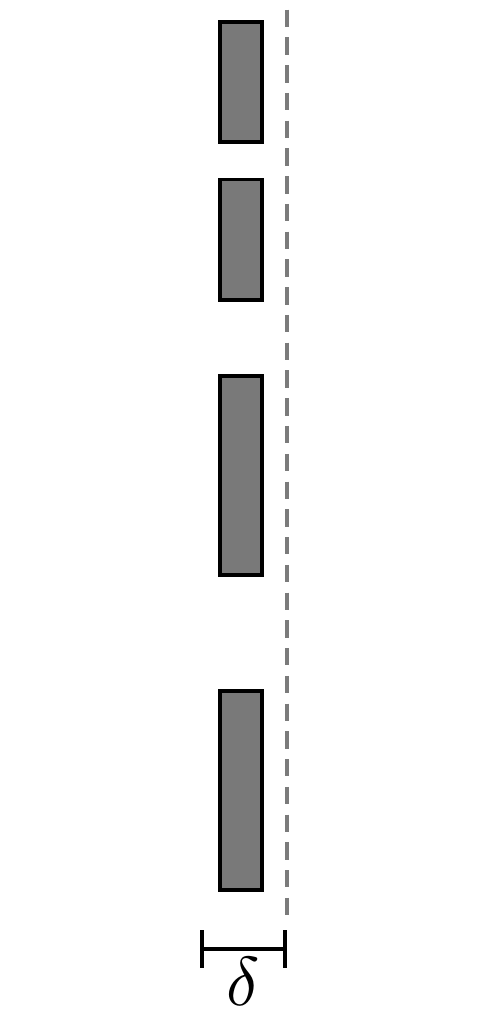}
                                           \caption{}
                               \end{subfigure}
                    \caption{Second compression phase of step 1.1.3 (A) a vector field is generated towards an attractive hyperplane, (B) the qualitative result}\label{precond2}
                \end{figure}

        \end{enumerate}
        
        As said before, this procedure is applied sequentially for every $1\leq k\leq d-1$ and every $ 1\leq n\leq N_{\Gamma,k}-1$ obtaining that the resulting sets satisfy
        $$\sup_{x\in \phi_T(\mathcal{H}_l)} |x^{(k)}-c_{k,N_{\Gamma,k}}|<\delta,\quad \forall k\in\{1,...,d-1\},\quad\forall l\in\{1,...,N\}.$$
    
        The set $\Omega$ has transformed, and $\mathrm{diam}_{(k)}(\phi_T(\Omega))<\delta $  for $1\leq k\leq d-1$.
        
                 {\color{black}
        Before continuing, we will ``\textit{move the sets away from the targets}''. The reason of this particular action will be better understood in Step 2 of this proof. 
        
        Let $c=\mathrm{diam}(\{0\}\cup\{\alpha_m\}_{m=1}^M)+1$,  consider the ball $\mathbb{B}(0,c)$ and pick the hyperplane
        \begin{equation}\label{allocationhyp}
         \left\{x^{(1)}=c\right\}.
        \end{equation}
        Choose $A$ and $W$ so that the vector field, in its nonzero region in the phase space, follows the Cartesian direction of the first element of the canonical basis and makes the hyperplane \eqref{allocationhyp} attractive. We solve \eqref{CTNN} for $T$ long enough so that:
        $$\sup_{x\in \phi_T(\mathcal{H}_l)} |x^{(1)}-c|<\delta,\quad \forall l\in\{1,...,N\}.$$
        This transformation does not affect the smallness of the $k$ coordinates from $k=1,...,d-1$.

        }
        
        The overall required time of this step is the following
        \begin{equation*}\label{T.2.1}
         T_{\text{step } 1.1}\sim_{\Omega,d,\alpha} N_\Gamma \left(\log\left(\frac{1}{\delta}\right)+\frac{1}{\zeta}\right)
        \end{equation*}
        where by $\sim_\alpha$ we denote the dependence on the targets.
        
        In the separation step, Step 1.1.2, the set deforms since the field is not homogeneous along the set. We must ensure that a hyperplane with normal vector equal to an element of the canonical basis can fit between the transformed sets. This is the main reason of the first compression step, Step 1.1.1. This step reduces the $\mathrm{diam}_{(k)}$ of the set making that the field in the next step, Step 1.1.2 is almost constant on the hyperrectagle.  Let us quantify the present discussion.
        
        Before Step 1.1.2 the distance between two neighbor rectangles is $\zeta$. After applying Step 1.1.2 $(d-1)N_\Gamma$ times the separation is:
        $$\zeta-(d-1)N_\Gamma T_{\text{step 1.1.2}} \zeta\delta $$
        so it will be enough if
        \begin{equation*}
                \delta \leq \mathcal{C} N_{\Gamma}^{-1}\zeta
        \end{equation*}
        for a certain constant $\mathcal{C}$ to guarantee that after the whole process the distance between two neighbor rectangles is of the order of $\zeta$.
        
        \medskip
        Notice that $A$ has been chosen to be unitary, also $b$ is bounded since we have chosen only among the original hyperplanes defined in \ref{UAStep1}. Furthermore, the number of switches we employed are explicit:
         \begin{equation}\label{S2.1}
          \text{The number of switches in the }A\text{ control is }d-1.
         \end{equation}
         \begin{equation}\label{S2.1b}
          \text{The number of switches in the }b\text{ control and }W\text{ control are of the order of }N_{\Gamma}.
         \end{equation}

        \medskip
     \item[1.2] \textbf{Compression of the $d$-coordinate.}
     
                  Now, the task is to compress the $x^{(d)}$- coordinate and separate enough each set while keeping a small diameter. After the transformations made above, the sets can be separated by $N-1$ hyperplanes of the form $$\left\{x^{(d)}=C_{d,l}\right\}\qquad 1\leq l\leq N$$
                  For doing so, for $l$ from $1$ until $N$ we proceed as follows:
             
             \begin{enumerate}
             \medskip
            \item[1.2.1] \textbf{Compression.}
                 Take $\{x^{(d)}=C_{d,l}\}$ and $A_{ij}=\delta_{di}\delta_{dj}$, $b_i=-C_{d,l}\delta_{di}$ and we choose $W$ so that the hyperplane $\{x^{(d)}=C_{d,l}\}$ is attractive. We choose $T>0$ so that 
                $$\sup_{x\in \phi_T(\mathcal{H}_{l})}| x^{(d)}-C_{l,d}|<\delta\qquad \text{$\forall l$ such that} \inf_{x\in\mathcal{H}_l}x^{(d)}>C_{d,l}$$
                see Figure \ref{precond3}.
      
                \begin{figure}
                    \hspace{-0cm}\begin{subfigure}[b]{0.1\textwidth}
                    \includegraphics[scale=0.4]{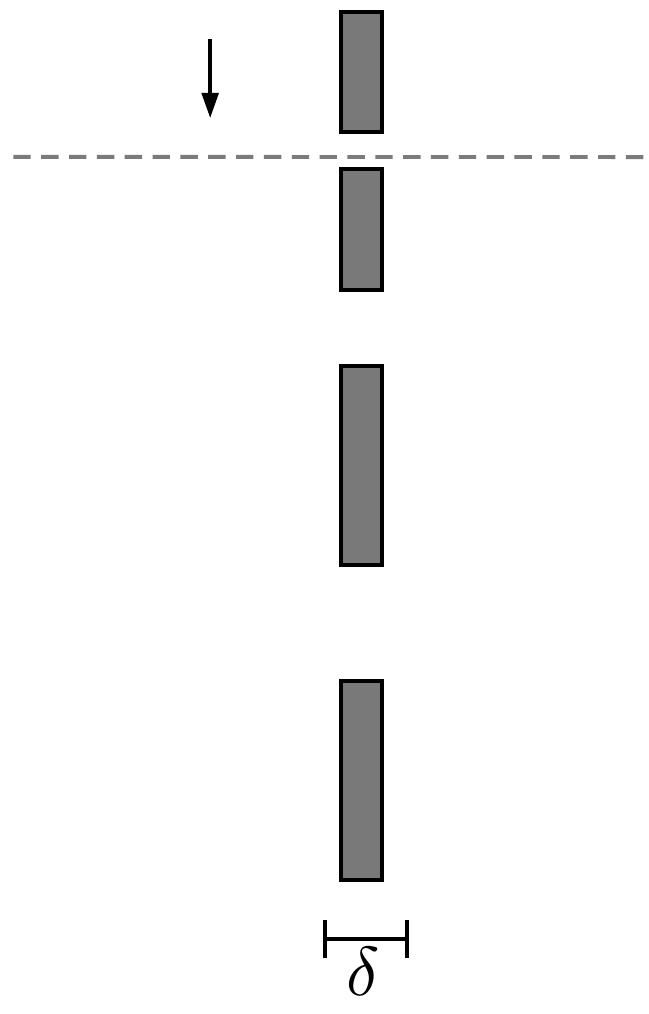}
                         \caption{}
                    \end{subfigure}
                       \hspace{6cm}\begin{subfigure}[b]{0.1\textwidth}
                        \includegraphics[scale=0.4]{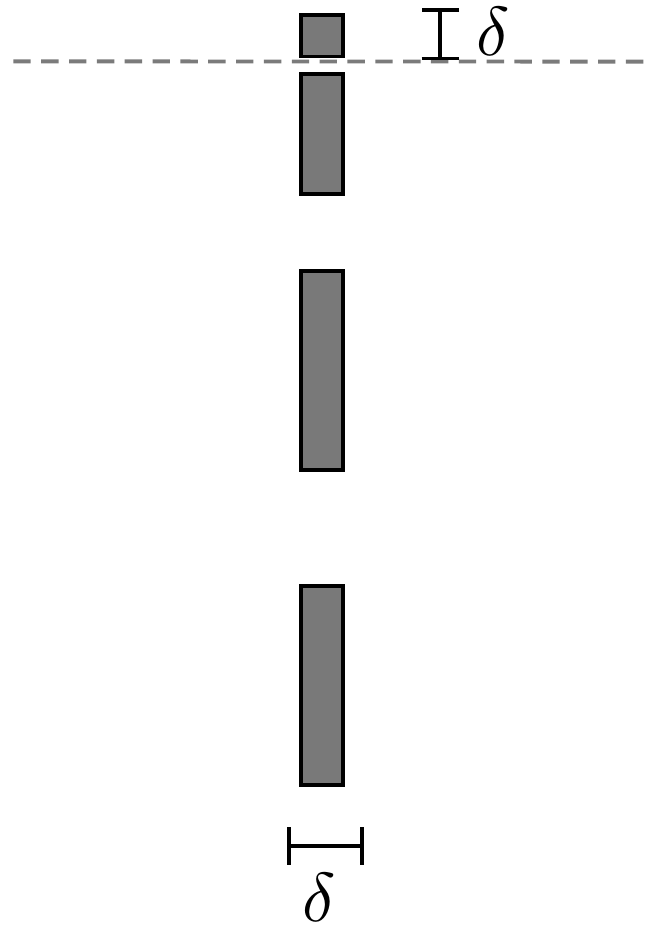}
                         \caption{}
                  \end{subfigure}
                    \caption{Compression the $x^{(d)}$ coordinate, step 1.2.1 (A) a vector field is generated towards an attractive hyperplane. (B) The resulting effect.}\label{precond3}
                \end{figure}
                The diameter of $\phi_{T}(\mathcal{H}_l)$ is lower than $\mathrm{diam}(\phi_{T}(\mathcal{H}_l))<\sqrt{d}\delta$.

            \medskip
            \item[1.2.2] \textbf{Separation.} We fix a parallel hyperplane to the one before. We set it slightly below at a distance of the order of the removed band,  $\zeta$,
                $\{x^{(d)}=C_{d,l}-(\sfrac{1}{4})\zeta\}$, and we choose $A_{ij}=\delta_{di}\delta_{dj}$, $b_i=(-C_{d,l}+(\sfrac{1}{4})\zeta)\delta_{di}$ and $W$ such that the field points to the $x^{(1)}$-coordinate. We solve \eqref{CTNN} for $T^*$ large enough so that 
                $$\inf_{x\in\phi(\mathcal{H}_l)} \left|c_{1,N_\Gamma}-x^{(1)}\right|= \frac{2}{N}. $$
                This would require $T^*=8/(N\zeta)$.
     
                When separating the sets one should ensure that at the end of the process we stay in a bounded set independently of $N$. 
     
                \begin{figure}
                        \hspace{-0cm}\begin{subfigure}[b]{0.1\textwidth}
                        \includegraphics[scale=0.4]{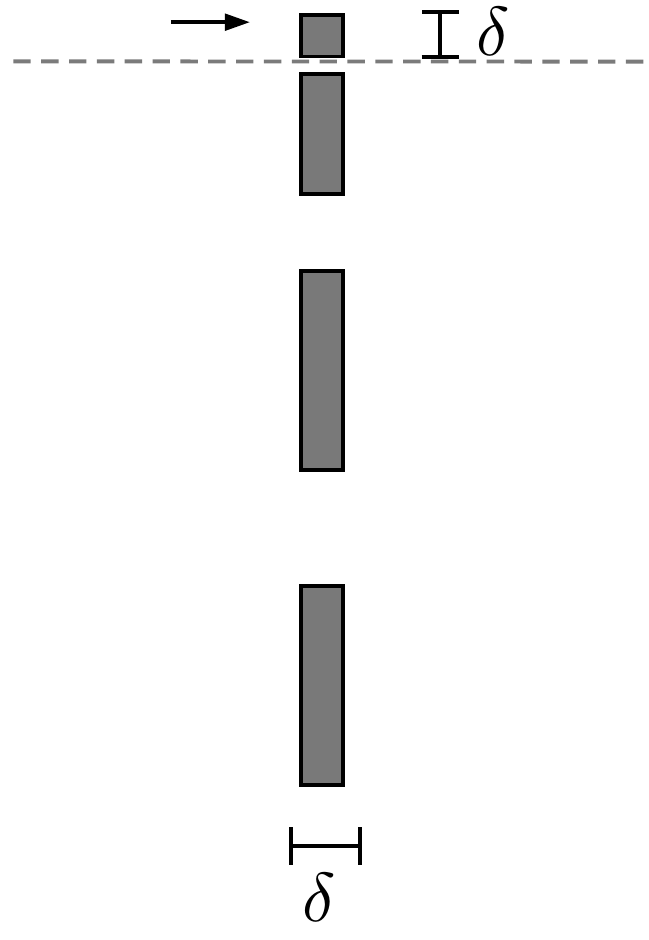}
                         \caption{}
                  \end{subfigure}
                       \hspace{6cm}\begin{subfigure}[b]{0.1\textwidth}
                    \includegraphics[scale=0.4]{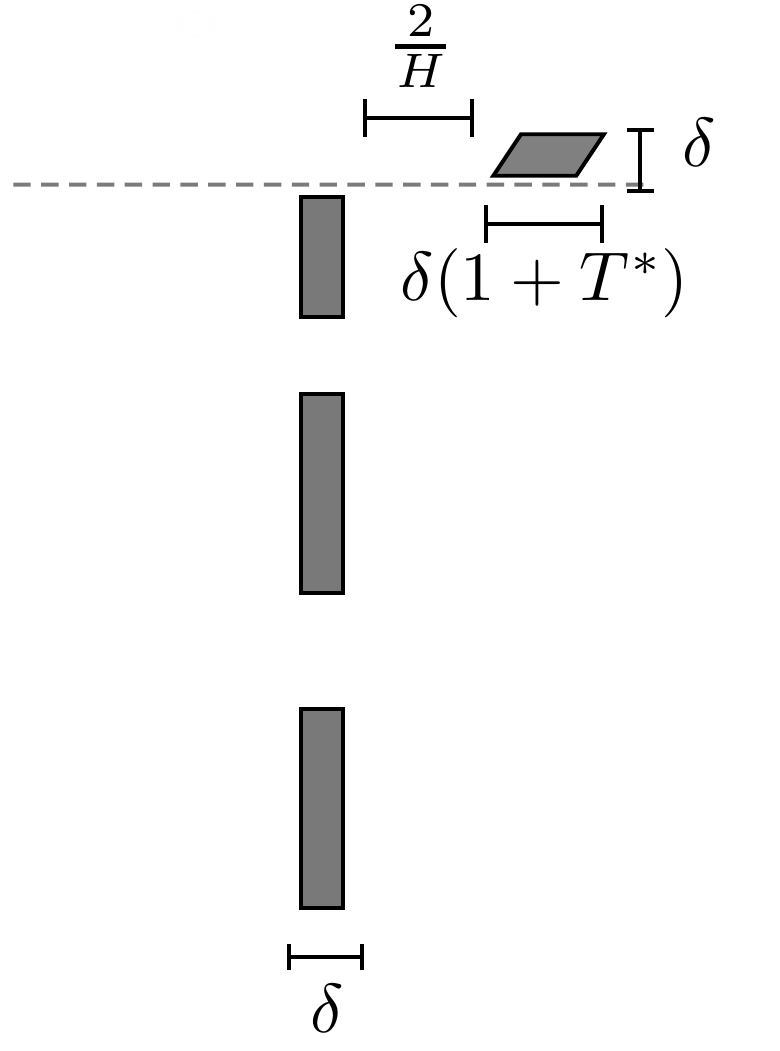}
                         \caption{}
                  \end{subfigure}
                    \caption{Displacement in the $x^{(1)}$ direction, Step 1.2.2 (A) A vector field is generated in order to push the set in the $x^{(1)}$-coordinate (B) Qualitative representation of the resulting effect.}\label{precond4}
                \end{figure}
                
                After applying this movement along the $x^{(1)}$-coordinate, the  diameter of the set $\mathcal{H}_l$ has grown being:
                \begin{align*}
                    \mathrm{diam}(\phi(\mathcal{H}_l))^2\leq& \left((d-1)+\left(1+T^*\right)^2\right)\delta^2\\
                    &\left((d-1)+\left(1+\frac{8}{N\zeta}\right)^2\right)\delta^2.
                \end{align*}

            \end{enumerate}
            
            \begin{figure}
                                      \hspace{-0cm}\begin{subfigure}[b]{0.1\textwidth}
                    \includegraphics[scale=0.4]{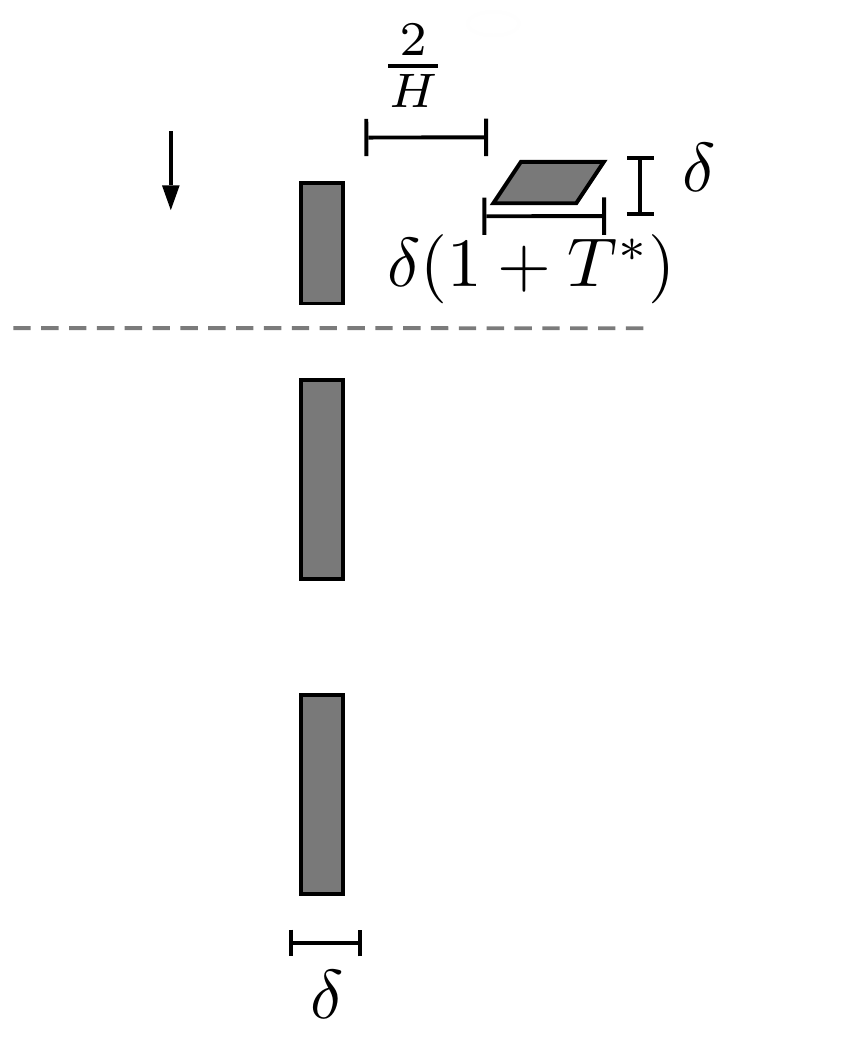}
                         \caption{}
                  \end{subfigure}
                       \hspace{6cm}\begin{subfigure}[b]{0.1\textwidth}
                    \includegraphics[scale=0.4]{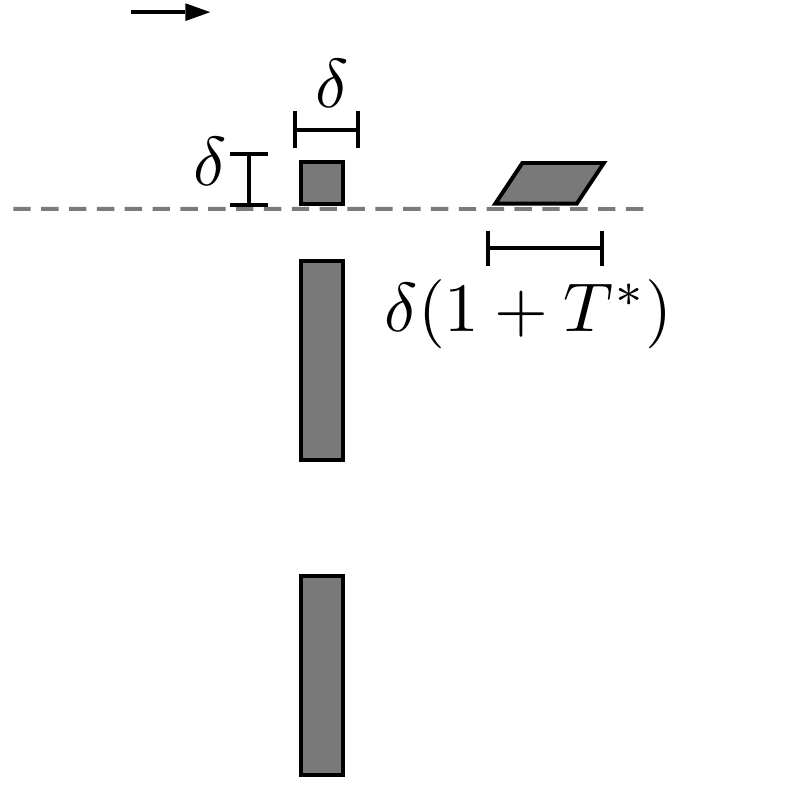}
                         \caption{}
                  \end{subfigure}
                    \caption{Compression phase for the $x^{(d)}$ direction, (A) a vector field is generated towards an attractive hyperplane. (B) Qualitative representation of the resulting effect.}\label{precond5}
                \end{figure}
                
                The time needed to perform this steps, the number of switches and the norm of $b$ are estimated by the following:
                \begin{equation}\label{T2.2}
                    T_{\text{step } 1.2}\sim_{\Omega} N\left(\log\left(\frac{1}{\delta}\right)+\frac{1}{\zeta}\right),\quad                     \|b\|_\infty\sim N
                \end{equation}

     
                \begin{equation}\label{S2.2}
                    \begin{array}{c}
                     \text{The number of switches of the $A$ control has been $1$.}\\
                     \text{The number of switches of the $b$ and $W$ control is of the order of $N$.}
                    \end{array}                    
                \end{equation}
     

    \end{enumerate}
    \medskip
    
                    We apply iteratively this process (see Figure \ref{precond5}) for all hyperplanes $\{x^{(d)}=C_{d,l}\}$ (obtaining at the end Figure \ref{finalsec}).  Since the velocity field has different strength at different parts of the set, the distance between two $\mathcal{H}_l$ can diminish.
                
     In order to guarantee that after the whole process we have enough distance between the sets, we have to require $\delta$ to be small enough. When we apply the horizontal translation, the difference in the vector field inside each set is $\delta$. Hence, we know that the distance in their $x^{(1)}$-coordinate between the sets reduces as:
     $$\min_{\substack{x\in\phi_t(\mathcal{H}_l)\\z\in\phi_t(\mathcal{H}_{l+1})}}|x^{(1)}-z^{(1)}|\geq\frac{2}{N}-\left(\delta t\right).$$
     If we impose that, after $T^*N$ units of time, the distance in the $x^{(1)}$-coordinate is bigger than $1/N$, this implies that the requirement is the following:
     \begin{equation}\label{delta1}
      \delta<\frac{\zeta}{8N}
     \end{equation}
     The requirement \eqref{delta1} will be not the only one.
     Now the task is to guarantee that $\mathrm{diam}(\phi_T(\Omega_h))$ is uniformly bounded for $h$ small enough and that the maximum diameter of all the sets $\phi_T(\mathcal{H}_l)$ is as small as we desire.
     
     \begin{itemize}
            
            \item The requirement for the diameter of each set follows from
            \begin{equation*}
                \mathrm{diam}(\phi(\mathcal{H}_l))^2\leq \left((d-1)+(1+NT^*)^2\right)\delta^2
            \end{equation*}
            \begin{equation}\label{diambound}
                \mathrm{diam}(\phi(\mathcal{H}_l))^2\leq \left((d-1)+\left(1+\frac{8}{\zeta}\right)^2\right)\delta^2\leq \eta^2
            \end{equation}
            \item For the diameter of $\phi_T(\Omega_h)$ one has that
            \begin{align*}
            \mathrm{diam}(\phi_T(\Omega_h))^2\leq& \delta^2(d-1)+N^2\left(\frac{2}{N}+\max_{l} \mathrm{diam}(\mathcal{H}_l))\right)^2\\
            \leq&\delta^2(d-1)+4+N^2\eta^2+4N\eta.
            \end{align*}
     \end{itemize}
     Choosing any $\eta$ such that
     \begin{equation}\label{criteta}
        \eta<\frac{1}{N}
     \end{equation}
     we would guarantee that $\phi_T(\Omega)$ remains bounded.

      With all the process described, we have moved the sets downwards in the $x^{(d)}$-component, this translation depends on $N$. However, by putting an attractive hyperplane at the of the form $x^{(d)}=c$ at the desired place, we can move the set without having a dependence on $N$.
     \smallskip
     
         Now the sets are at distances bigger than $1/N$ between them and each one of them has a bound in its diameter  \eqref{diambound}. We can choose $\delta$ as small as we want and we can guarantee that:
        \begin{equation*}
         \mathrm{diam}(\phi(\mathcal{H}_l))<\eta \quad \iff\quad\delta^2 <\eta^2\left((d-1)+\left(1+\frac{8}{\zeta}\right)^2\right)^{-1}. 
        \end{equation*}

     \begin{figure}
      \includegraphics[scale=0.7]{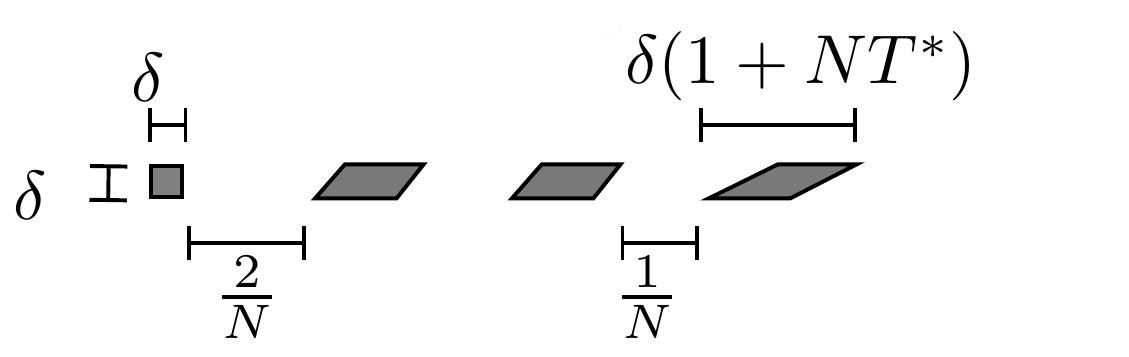}
      \caption{Qualitative representation of the final configuration of the compression process. The sets $\phi_T(\mathcal{H}_l)$ are uniformly separated by $1/N$ and their diameter is of the order of $\delta$.}\label{finalsec}
     \end{figure}

      Using the expressions \eqref{T2.2} and \eqref{T.2.1}, we find that the time horizon needed in Step 1 of this proof scales in the following manner:
     \begin{equation}\label{T2}
      T_{\text{step }1}\sim_{\Omega,d,\alpha} N\left(\log\left(\frac{1}{\delta}\right)+\frac{1}{\zeta}\right)+N_{\Gamma}\left(\log\left(\frac{1}{\delta}\right)+\frac{1}{\zeta}\right)
     \end{equation}

      The norm of the control $b$ depends on $N$ as seen in the treatment of the $x^{(d)}$-coordinate (Step 1.2) \eqref{T2.2}
     \begin{equation}\label{N2}
      \|b\|_\infty\sim N
     \end{equation}
     Considering \eqref{S2.2}\eqref{S2.1} the total number of switches on the control $A$ is 
     \begin{equation}\label{S2}
      \text{The number of switches of the $A$ control has been $d$},
     \end{equation}
     while for $b$ and $W$ we use \eqref{S2.1b} and \eqref{S2.2}
    \begin{equation}\label{S2b}
      \text{The number of switches of the $b$ and $W$ control is of the order of $N$}
     \end{equation}

    \medskip
 \noindent\textbf{2. Ordering and Grouping.}

         Let $\phi_T$ be the flow associated to the compression phase, and let $\Omega$ and $\mathcal{H}_l$ be defined as in \ref{UAStep1} of this proof. We redefine $\Omega$ and $\mathcal{H}_l$ as $\Omega:=\phi_T(\Omega)$ and $\mathcal{H}_l:=\phi_T(\mathcal{H}_l)$. Without loss of generality, we make a change of coordinates, for simplifying notation, in a way that:
        $$ \Omega \subset \{x:\quad |x^{(1)}|\leq 2,|x^{(k)}|<\eta\quad k\geq 2\}.$$
        Moreover, let $K=2\textcolor{white}{.}\mathrm{diam}\left(\Omega\cup \{\alpha_m\}_{m=1}^M\cup \{0\}\right)$ and consider the ball $ \mathbb{B}(0,K).$
        Naturally one has that:
        \begin{equation*}
            \Omega \subset \mathbb{B}(0,K),\qquad
            \{\alpha_m\}_{m=1}^M \subset \mathbb{B}(0,K).
        \end{equation*}
 
  
         Note that by Step 1.1, all the sets are \textit{away} from the targets and in particular, the following condition holds:
        \begin{equation}\label{reccondition}
            \alpha_{m'}^{(1)}\notin \mathcal{H}_l^{(1)} \quad\text{ if }\alpha_{m(l)}\neq \alpha_{m'}.
        \end{equation}
        Remind that by $(\omega)^{(k)}$ we denote the $k$-th component of the set $\omega$, i.e. $$\omega^{(k)}:=\{x\in\mathbb{R}: \exists y\in\omega\quad \text{ such that } y^{(k)}=x\}.$$
        Now we build a control that will control the first component of $\mathcal{H}_l$ to their corresponding target while ensuring that after this control process, \eqref{reccondition} still holds. This will allow us to proceed recursively.

         The procedure we are going to show is, in particular, an improvement of Theorem \ref{TH1}  for keeping track properly of $\Omega_h$ while applying our controls.
         
         The time horizon will depend on the number of elements $N$. The mission is to make use of the explicit controlled flows to obtain sharper bounds and independent on the number of elements. Since the time horizon depends on $N$, the direct application of Grönwall's inequality is not of practical use for bounding $\Omega_h$ independently of $N$. This is the reason why we examine the dynamics in more detail.

         {\color{black}This Lemma that we are proving, leaves the system in a configuration in which the simultaneous controllability of Theorem \ref{TH2} can be applied. In the application of Theorem \ref{TH2}, after applying Theorem \ref{TH1}, we only set controls that endow flows that follow directions in the orthogonal subspace of $\mathrm{Span}((1,0,...,0))$.   Due to the Cartesian structure of our flows see that  we only have to ensure that after the refinement of Theorem \ref{TH1} the condition $\mathrm{diam}_{(k)}(\Omega)=\mathcal{O}(\eta)$ for any $k\geq 2$ still holds.}

    
    \medskip
     
            For every $l\in\{1,...,N\}$ consider the center of $\mathcal{H}_l$,
            $$d_l=\frac{1}{|\mathcal{H}_l|}\int_{\mathcal{H}_l}xdx $$
            and we proceed sequentially.
            \begin{enumerate}[leftmargin=0cm]
                \item  Observe that $\Omega$ is bounded in the second coordinate by:
                    $$ x^{(2)}=\eta,\qquad x^{(2)}=-\eta.$$
                    Fix the following bounds:
                    \begin{subequations}\label{fin}
                        \begin{align}
                            &x^{(1)}=d_l^{(1)}+2\eta\label{fin1}\\
                            &x^{(1)}=d_l^{(1)}-2\eta\label{fin2}
                        \end{align}
                        \end{subequations}
                    and the hyperplane\eqref{fin3}.
                    \begin{equation}\label{fin3}
                        \left\{x^{(2)}=-1\right\}
                    \end{equation}
                    as Figure \ref{Lastfigure}.A shows.
  
                    \begin{figure}
                    \begin{subfigure}[b]{0.37\textwidth}
                        \includegraphics[width=\textwidth]{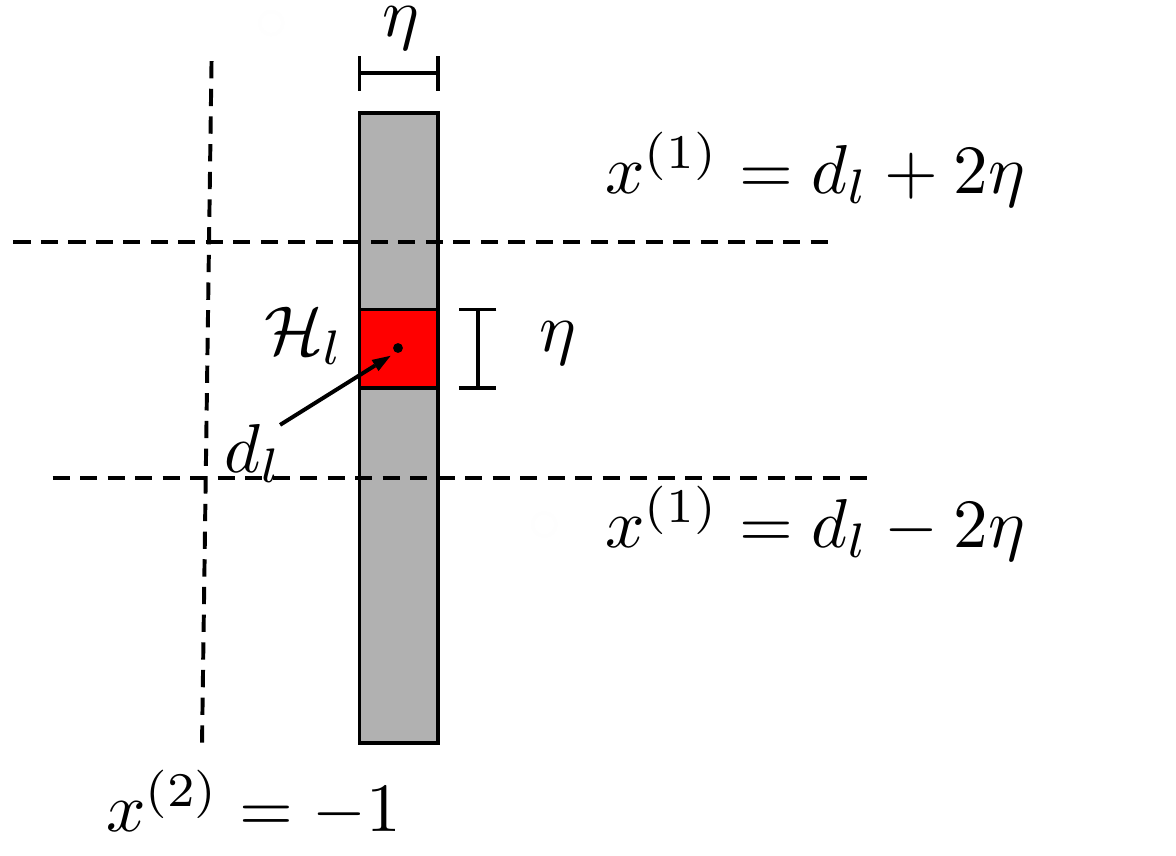}
                        \caption{}
                        \end{subfigure}
                        \\
                        \begin{subfigure}[b]{0.37\textwidth}
                            \includegraphics[width=\textwidth]{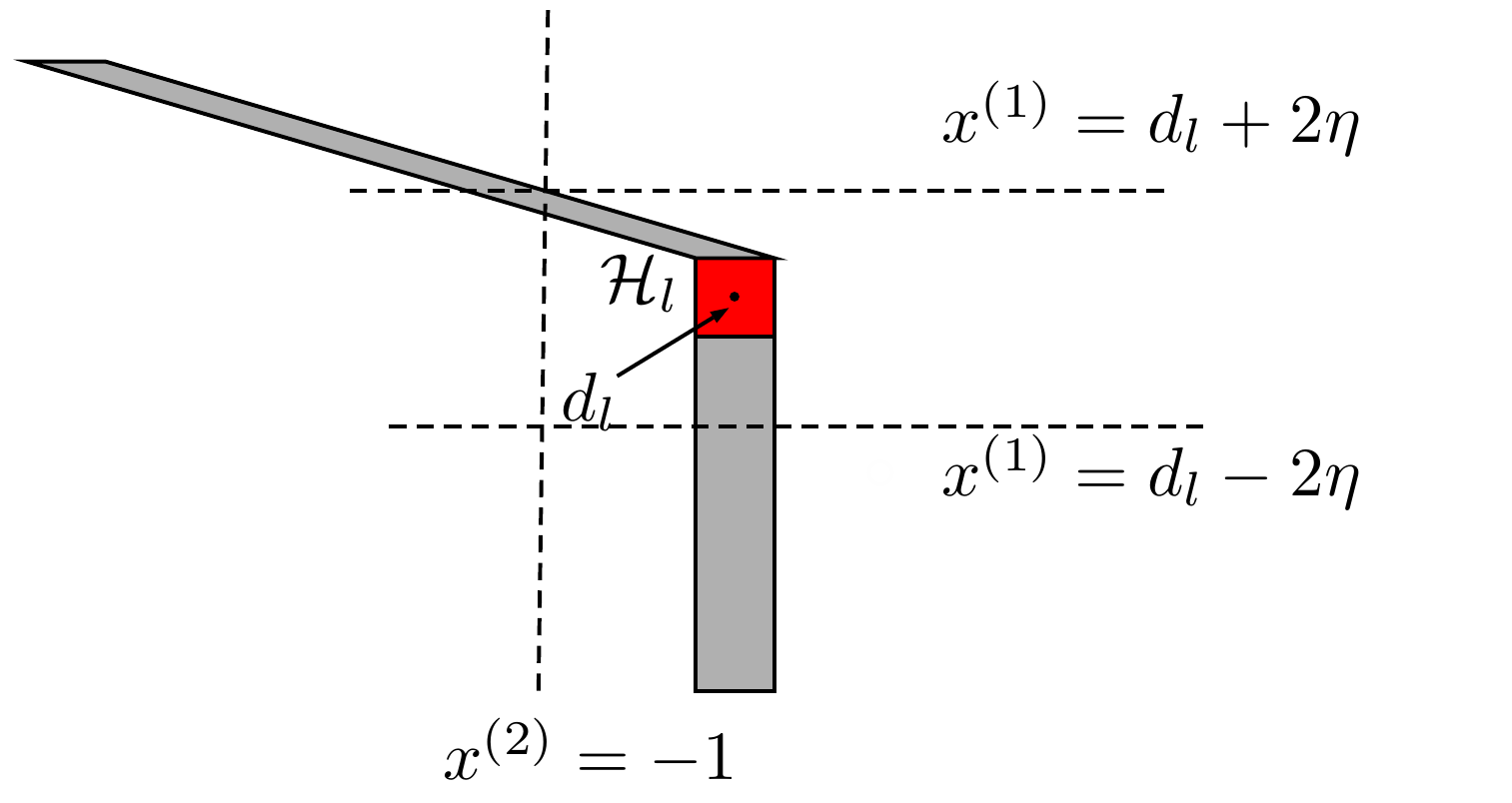}
                        \caption{}
                            \end{subfigure}
                        \hfill
                        \begin{subfigure}[b]{0.37\textwidth}
                            \includegraphics[width=\textwidth]{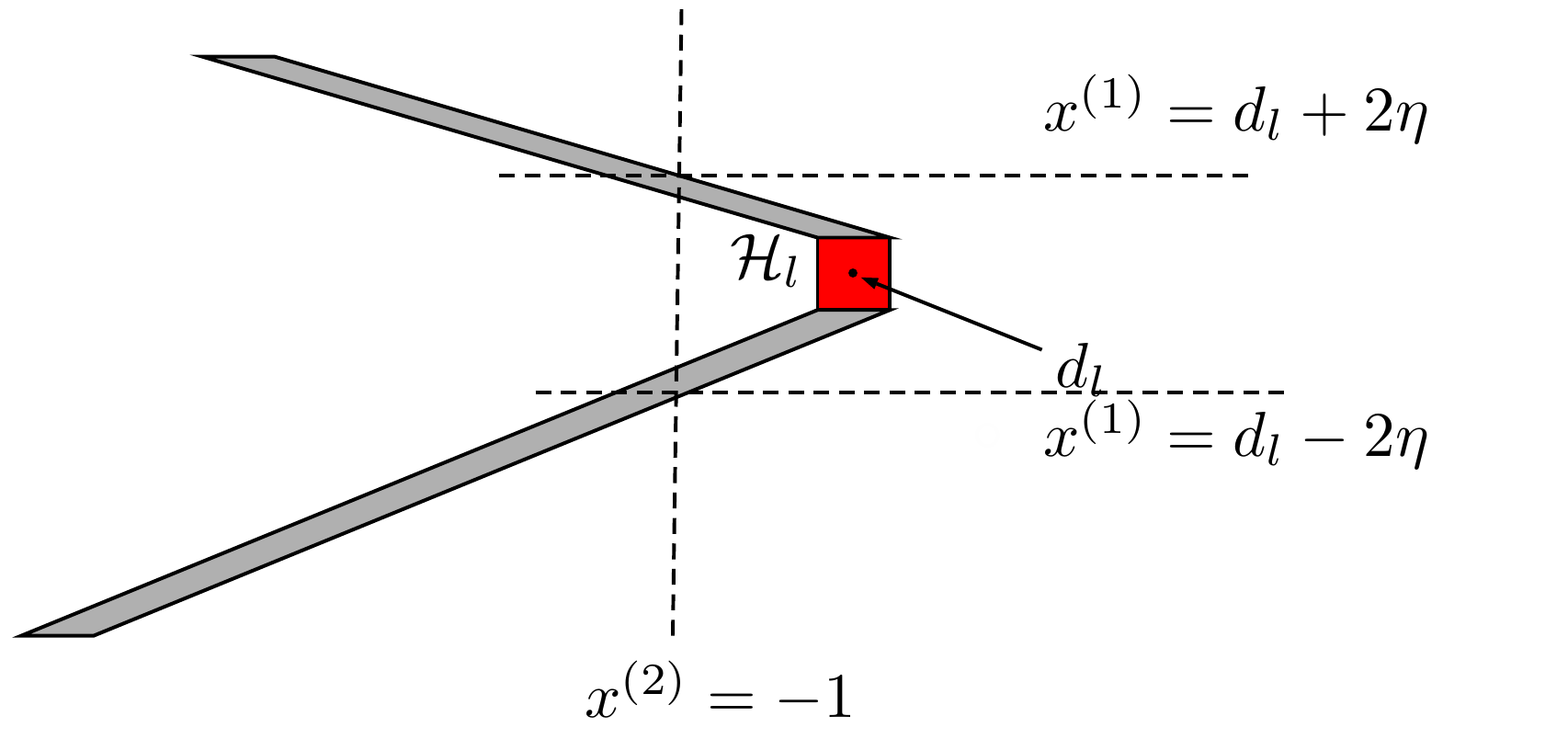}
                            \caption{}
                            \end{subfigure}
                        \caption{(A) Representation of the bounds \eqref{fin} and the hyperplane \eqref{fin3}, $\Omega$ is contained in the hyperrectangle, the set $\mathcal{H}_l$ in red and $\Omega\setminus\mathcal{H}_l$ in grey. (B) The resulting set after applying the flow $\psi_{T_{Sep1}}^{Sep1}$ to $\Omega$. (C) The resulting set after applying $\psi_{T_{Sep2}}^{Sep2}$ to $\psi_{T_{Sep1}}^{Sep1}(\Omega)$.}\label{Lastfigure}
                    \end{figure}

            \item We select the hyperplane
                $\{x^{(1)}=d_l+\eta\}.$
                We apply the flow as in Theorem \ref{TH1}, pointing at the negative side in the $x^{(2)}$-component (as done in Theorem \ref{TH1} in equation \eqref{tria2} and selecting the control $A$ as in equation \eqref{tria1}). Then, the bound in $x^{(2)}$ will evolve as
                \begin{equation}\label{boundevolution}
                    x^{(2)}(t)=-\max(x^{(1)}-d_l^{(1)}-\eta,0)t+\eta.
                \end{equation}
                \item Let us denote by $\psi_{T}^{Sep1}$ the associated flow with such controls.  We want to find $T_{Sep1}$ such that:
                $$\mathrm{diam}_{(1)}\left(\psi^{Sep1}_{T_{Sep1}}(\Omega)\cap \{x^{(2)}\geq -1\}\cap \{x_1\geq d_l+\eta\}\right)=\eta$$
                    For doing so, we compute the intersection between \eqref{boundevolution} and \eqref{fin3}, taking $x^{(1)}$ as in \eqref{fin1} and we find a lower bound for the time in which the translation has to be applied (see Figure \ref{Lastfigure}.B):
                \begin{equation}\label{boundont}
                        T_{Sep1}\sim 1+\frac{1}{\eta}.
                \end{equation}
            \item We can do an analogous procedure for the hyperplane $x^{(1)}=d_l-\eta$, and we obtain also the same lower bound for the time  \eqref{boundont}. Let us denote by $\psi_{T_{Sep2}}^{Sep2}$ the resulting flow. After this point, we obtain a structure similar to Figure \ref{Lastfigure}.C. Let us define, by convenience,   
                    $$\psi^{Sep}:=\psi_{T_{Sep2}}^{Sep2}\circ \psi_{T_{Sep1}}^{Sep1}.$$
                Now we have that:
            \begin{equation*}
                \mathrm{diam}_{(1)}(\psi^{Sep}(\Omega)\cap \{x_2\geq -1\})\leq 4\eta,\qquad\qquad
                (\psi^{Sep}(\Omega))^{(1)}=(\Omega)^{(1)}.
            \end{equation*}
            
            \item Now we choose the hyperplane $\{x^{(2)}=-1\}$ and we apply a translation movement. Denote by $\psi_{T_{Mov}}^{Mov}$ the translation movement flow. We choose a vector field in the $x^{(1)}$ direction pointing to the first component of the target of $d_l$, $\alpha_{m(l)}^{(1)}$, during the required time $T_{Mov}$ to reach the target. Since the distance between the hyperplane $\{x^{(1)}=-1\}$ and $d_l$ is independent of $\eta$, $T_{Mov}$ does not depend on the parameter $\eta$. 
            $$T_{Mov}=\frac{|d_l^{(1)}-\alpha_{m(l)}^{(1)}|}{|d_l^{(2)}+1|}. $$
            Then we have that, defining $a_1:=\min\{\alpha_{m(l)}^{(1)},d_l^{(1)}\}$ and $a_2:=\max\{\alpha_{m(l)}^{(1)},d_l^{(1)}\}$, there exists a constant $C$ independent of $\eta$ such that:   
            \begin{equation*}
                \left(\psi_{T_{Mov}}^{Mov}\left(\psi^{Sep}(\Omega)\cap\{x_2\geq -1\}\right)   \right)^{(1)}\subset \left[a_1-C\eta,a_2+C\eta\right]\subset \mathbb{B}(0,K)^{(1)}
            \end{equation*}
            for $\eta$ small enough. Moreover
            \begin{align*}
                &\psi_{T_{Mov}}^{Mov}\left(\psi^{Sep}(\Omega)\cap\{x_2< -1\}\right)=\psi^{Sep}(\Omega)\cap\{x_2< -1\}\\
                &\max_{x\in\psi_{T_{Mov}}^{Mov}\left(\psi^{Sep}(\mathcal{H}_l)\right)}|x-\alpha_{m(l)}|\leq C_2\eta
            \end{align*}
            where $C_2=((d-1)+C\eta)^{1/2}$.
            
            For guaranteeing that after this transformation we still fulfill condition \eqref{reccondition}, we have to choose $\eta$ small enough, depending only on the target configuration, i.e. $\eta$ small so that $$\alpha_{m'}^{(1)}\notin[\alpha_{m(l)}^{(1)}-C\eta,\alpha_{m(l)}^{(1)}-C\eta]\qquad \text{if }\alpha_{m'}\neq \alpha_{m(l)}.$$

        \item We have deformed substantially the set in the $x^{(2)}$-component. Now, in order to apply the argument for the next set $\mathcal{H}_l$ we need to compress around that coordinate. So we choose the hyperplane $\{x^{(2)}=0\}$ and we apply a contraction, denote it by $\psi^{Cont}_{T}$, until time $T_{Cont}$ for which the following is satisfied
        $$\mathrm{diam}_{(k)}\left(\psi_{T_{Cont}}^{Cont}\left(\psi_{T_{Mov}}^{Mov}\left(\psi^{Sep}(\Omega)\right)\right)\right)<\eta.$$
        Furthermore, one has that:
            \begin{align*}
                \left(\psi_{T_{Cont}}^{Cont}\left(\psi_{T_{Mov}}^{Mov}\left(\psi^{Sep}(\Omega)\cap \{x_2<-1\}\right)\right)\right)^{(1)}\subset(\Omega)^{(1)}.
            \end{align*}

        \item Redefining $\Omega:=\psi_{T_{Cont}}^{Cont}\left(\psi_{T_{Mov}}^{Mov}\left(\psi^{Sep}(\Omega)\right)\right)$, one can apply the argument recursively for the next $\mathcal{H}_l$ until all $\mathcal{H}_l$'s have its first component approximately controlled.

        \end{enumerate}
        
        \noindent Now there are $M$ sets of diameter of the order of $\eta$.
        The control time of the whole process $T_{\text{step }2}=N(T_{Sep1}+T_{Sep2}+T_{Mov}+T_{Cont})$ is of the order of
     \begin{equation}\label{T3}
        T_{\text{step }2}\sim_{\alpha} N\eta^{-1}+N\log\left(\frac{1}{\eta}\right)+N,
     \end{equation}
     while the norms of the control $b$ remained uniformly bounded.
     \begin{equation}\label{S3}
      \text{The number of switches of the controls $A,W,b$ has been of the order of $N$}
     \end{equation}

     
     \medskip
     {\color{black}
     Comparing the two steps, the final time horizon is of the order of

     \begin{equation*}
        T\lesssim_{\Omega,\alpha,d} N\left(\frac{1}{\eta}+\frac{1}{\zeta}+\log\left(\frac{1}{\eta\zeta}\right)\right).
     \end{equation*}
     The norms of the control $b$ is dependent on $N$ from Step 1.2
          \begin{equation*}
        \|b\|_{L^\infty}\lesssim_{\Omega} N.
     \end{equation*}
     Finally the number of switches is of the order of $N$
     \begin{equation*}
      \text{The number of switches of the controls $A,W,b$ has been of the order of $N$.}
     \end{equation*}
}
     \end{proof}
   
\begin{remark}[The Bottleneck]\label{bottleneck}
 The compression process done in the Lemma above is constituted by several stages, combining contractions and separations in different manner. However, there is a control stage which is giving the bound on the control cost which is the Ordering and Grouping step (Step 2 of the proof). In producing certain flows, such as separations, NODEs require to have higher controls than when producing contractions.
\end{remark}

\bibliography{CBIBfull16feb.bib}{}

\begin{thebibliography}{10}

\bibitem{agrachev2020control}
A.~Agrachev and A.~Sarychev.
\newblock Control on the manifolds of mappings as a setting for deep learning.
\newblock {\em arXiv preprint arXiv:2008.12702}, 2020.

\bibitem{benamou1998optimal}
J.-D. Benamou and Y.~Brenier.
\newblock The optimal time-continuous mass transport problem and its augmented
  {L}agrangian numerical resolution.
\newblock 1998.

\bibitem{benamou2000computational}
J.-D. Benamou and Y.~Brenier.
\newblock A computational fluid mechanics solution to the {M}onge-{K}antorovich
  mass transfer problem.
\newblock {\em Numerische Mathematik}, 84(3):375--393, 2000.

\bibitem{bishop2017fractals}
C.~Bishop and Y.~Peres.
\newblock {\em Fractals in probability and analysis}, volume 162.
\newblock Cambridge University Press, 2017.

\bibitem{burger2001error}
M.~Burger and A.~Neubauer.
\newblock Error bounds for approximation with neural networks.
\newblock {\em Journal of Approximation Theory}, 112(2):235--250, 2001.

\bibitem{chen2018neural}
R.~Chen, Y.~Rubanova, J.~Bettencourt, and D.~Duvenaud.
\newblock Neural ordinary differential equations.
\newblock In {\em Advances in neural information processing systems}, pages
  6571--6583, 2018.

\bibitem{chen2017matrix}
Y.~Chen, T.~Georgiou, and A.~Tannenbaum.
\newblock Matrix optimal mass transport: a quantum mechanical approach.
\newblock {\em IEEE Transactions on Automatic Control}, 63(8):2612--2619, 2017.

\bibitem{chizat2020implicit}
L.~Chizat and F.~Bach.
\newblock Implicit bias of gradient descent for wide two-layer neural networks
  trained with the logistic loss.
\newblock {\em arXiv preprint arXiv:2002.04486}, 2020.

\bibitem{cuchiero2020deep}
C.~Cuchiero, M.~Larsson, and J.~Teichmann.
\newblock {D}eep neural networks, generic universal interpolation, and
  controlled {ODE}s.
\newblock {\em SIAM Journal on Mathematics of Data Science}, 2(3):901--919,
  2020.

\bibitem{cybenko1989approximation}
G.~Cybenko.
\newblock Approximation by superpositions of a sigmoidal function.
\newblock {\em Mathematics of control, signals and systems}, 2(4):303--314,
  1989.

\bibitem{daubechies2019nonlinear}
I.~Daubechies, R.~DeVore, S.~Foucart, B.~Hanin, and G.~Petrova.
\newblock Nonlinear approximation and (deep) relu networks.
\newblock {\em arXiv preprint arXiv:1905.02199}, 2019.

\bibitem{duprez2019approximate}
M.~Duprez, M.~Morancey, and F.~Rossi.
\newblock Approximate and exact controllability of the continuity equation with
  a localized vector field.
\newblock {\em SIAM Journal on Control and Optimization}, 57(2):1284--1311,
  2019.

\bibitem{duprez2020minimal}
M.~Duprez, M.~Morancey, and F.~Rossi.
\newblock Minimal time for the continuity equation controlled by a localized
  perturbation of the velocity vector field.
\newblock {\em Journal of Differential Equations}, 269(1):82--124, 2020.

\bibitem{esteve2020large}
C.~Esteve, B.~Geshkovski, D.~Pighin, and E.~Zuazua.
\newblock Large-time asymptotics in deep learning.
\newblock {\em arXiv preprint arXiv:2008.02491}, 2020.

\bibitem{falconer2004fractal}
K.~Falconer.
\newblock {\em Fractal geometry: mathematical foundations and applications}.
\newblock John Wiley \& Sons, 2004.

\bibitem{guhring2020error}
I.~G{\"u}hring, G.~Kutyniok, and P.~Petersen.
\newblock Error bounds for approximations with deep {ReLU} neural networks in
  {$W ^{s, p}$} norms.
\newblock {\em Analysis and Applications}, 18(05):803--859, 2020.

\bibitem{haber2017stable}
E.~Haber and L.~Ruthotto.
\newblock Stable architectures for deep neural networks.
\newblock {\em Inverse Problems}, 34(1):014004, 2017.

\bibitem{he2016deep}
K.~He, X.~Zhang, S.~Ren, and J.~Sun.
\newblock Deep residual learning for image recognition.
\newblock In {\em Proceedings of the IEEE conference on computer vision and
  pattern recognition}, pages 770--778, 2016.

\bibitem{hornik1989multilayer}
K.~Hornik, M.~Stinchcombe, H.~White, et~al.
\newblock Multilayer feedforward networks are universal approximators.
\newblock {\em Neural networks}, 2(5):359--366, 1989.

\bibitem{jaffard1996wavelet}
S.~Jaffard and Y.~Meyer.
\newblock {\em Wavelet methods for pointwise regularity and local oscillations
  of functions}, volume 587.
\newblock American Mathematical Soc., 1996.

\bibitem{lecun2015deep}
Y.~LeCun, Y.~Bengio, and G.~Hinton.
\newblock Deep learning.
\newblock {\em nature}, 521(7553):436--444, 2015.

\bibitem{leshno1993multilayer}
M.~Leshno, V.~Lin, A.~Pinkus, and S.~Schocken.
\newblock Multilayer feedforward networks with a nonpolynomial activation
  function can approximate any function.
\newblock {\em Neural networks}, 6(6):861--867, 1993.

\bibitem{li2019deep}
Q.~Li, T.~Lin, and Z.~Shen.
\newblock Deep learning via dynamical systems: An approximation perspective.
\newblock {\em arXiv preprint arXiv:1912.10382}, 2019.

\bibitem{lions1988exact}
J.-L. Lions.
\newblock Exact controllability, stabilization and perturbations for
  distributed systems.
\newblock {\em SIAM review}, 30(1):1--68, 1988.

\bibitem{loheac2016averaged}
J.~Loh{\'e}ac and E.~Zuazua.
\newblock From averaged to simultaneous controllability.
\newblock In {\em Annales de la Facult{\'e} des sciences de Toulouse:
  Math{\'e}matiques}, volume~25, pages 785--828, 2016.

\bibitem{osgood1903jordan}
W.~Osgood.
\newblock A {J}ordan curve of positive area.
\newblock {\em Transactions of the American Mathematical Society},
  4(1):107--112, 1903.

\bibitem{peyre2019computational}
G.~Peyr{\'e}, M.~Cuturi, et~al.
\newblock Computational optimal {T}ransport: {W}ith {A}pplications to {D}ata
  {S}cience.
\newblock {\em Foundations and Trends{\textregistered} in Machine Learning},
  11(5-6):355--607, 2019.

\bibitem{control-kineticCS}
B.~Piccoli, F.~Rossi, and E.~Tr{\'{e}}lat.
\newblock Control to {F}locking of the {K}inetic {C}ucker--{S}male {M}odel.
\newblock {\em SIAM J. Math. Anal.}, 47(6):4685--4719, 2015.

\bibitem{pinkus1999approximation}
A.~Pinkus.
\newblock Approximation theory of the mlp model in neural networks.
\newblock {\em Acta numerica}, 8(1):143--195, 1999.

\bibitem{pontryagin}
L.~Pontryagin.
\newblock {\em Mathematical theory of optimal processes}.
\newblock Routledge, 2018.

\bibitem{privat2015optimal}
Y.~Privat, E.~Tr{\'e}lat, and E.~Zuazua.
\newblock Optimal shape and location of sensors for parabolic equations with
  random initial data.
\newblock {\em Arch. Ration. Mech. Anal.}, 216(3):921--981, 2015.

\bibitem{privat2016optimal}
Y.~Privat, E.~Tr{\'e}lat, and E.~Zuazua.
\newblock Optimal observability of the multi-dimensional wave and
  schr{\"o}dinger equations in quantum ergodic domains.
\newblock {\em Journal of the European Mathematical Society}, 18(5):1043--1111,
  2016.

\bibitem{sander2021momentum}
M.~Sander, P.~Ablin, M.~Blondel, and G.~Peyr{\'e}.
\newblock Momentum residual neural networks.
\newblock {\em arXiv preprint arXiv:2102.07870}, 2021.

\bibitem{shishikura1998hausdorff}
M.~Shishikura.
\newblock The {H}ausdorff dimension of the boundary of the {M}andelbrot set and
  {J}ulia sets.
\newblock {\em Annals of Mathematics}, pages 225--267, 1998.

\bibitem{sontag1997complete}
E.~Sontag and H.~Sussmann.
\newblock Complete controllability of continuous-time recurrent neural
  networks.
\newblock {\em Systems \& control letters}, 30(4):177--183, 1997.

\bibitem{sussmann1979bang}
H.~Sussmann.
\newblock A bang-bang theorem with bounds on the number of switchings.
\newblock {\em SIAM Journal on Control and Optimization}, 17(5):629--651, 1979.

\bibitem{tabuada2020universal}
P.~Tabuada and B.~Gharesifard.
\newblock Universal approximation power of deep neural networks via nonlinear
  control theory.
\newblock {\em arXiv preprint arXiv:2007.06007}, 2020.

\bibitem{teshima2020universal}
T.~Teshima, K.~Tojo, M.~Ikeda, I.~Ishikawa, and K.~Oono.
\newblock Universal approximation property of neural ordinary differential
  equations.
\newblock {\em arXiv preprint arXiv:2012.02414}, 2020.

\bibitem{tucsnak2000simultaneous}
M.~Tucsnak and G.~Weiss.
\newblock Simultaneous exact controllability and some applications.
\newblock {\em SIAM Journal on Control and Optimization}, 38(5):1408--1427,
  2000.

\bibitem{villani2003topics}
C.~Villani.
\newblock {\em Topics in optimal transportation}.
\newblock Number~58. American Mathematical Soc., 2003.

\bibitem{villani2008optimal}
C.~Villani.
\newblock {\em Optimal transport: old and new}, volume 338.
\newblock Springer Science \& Business Media, 2008.

\bibitem{weinan2017proposal}
E.~Weinan.
\newblock A proposal on machine learning via dynamical systems.
\newblock {\em Communications in Mathematics and Statistics}, 5(1):1--11, 2017.

\bibitem{yague2021sparse}
C.~E. Yag{\"u}e and B.~Geshkovski.
\newblock Sparse approximation in learning via neural odes.
\newblock {\em arXiv preprint arXiv:2102.13566}, 2021.

\end{thebibliography}
\bibliographystyle{abbrv}

\end{document}